\def\real{{\mathbb{R}}}%real numbers
\def\meas{\mu} %measure
\def\metric{\rho} %symbol for the metric
\def\dist#1,#2.{\metric(#1,#2)} %distance between two points
\newcommand{\dermod}[1][U]{{\rm Der}(#1,\meas)} %local module of derivations
\newcommand{\dermodsp}{{\rm Der}(X,\meas)} %global module of derivations
\def\elleinfty#1.{{\rm L}^\infty(#1,\meas)} %L^\infty functions
\def\ellep#1.{{\rm L}^p(#1,\meas)} %L^p functions
\def\introellep{{\rm L}^p}%summy symbol for L^p functions
\def\presob#1.{{{\mathcal D}(#1,p)}}%presobolev space
\def\sections{\Gamma(T^*X)}%sections of the tangent bundle
\def\lipsob#1.{{\rm H}^{1,p}(#1,\meas)}%sobolev space obtained by closing Lipschitz functions
\def\introlipsob{{\rm H}^{1,p}}%dummy lipsob in the introduction
\def\lipalg#1.{{\rm Lip}^{\infty}(#1)} %Lipschitz algebra of bounded functions
\def\plipalg#1.{{\rm Lip}_0(#1,x_0)} %pointed Lipschitz algebra
\def\lipfun#1.{{\rm Lip}(#1)}%set of Lipschitz functions
\def\glip#1.{{\bf L}(#1)} %global Lipschitz constant
\def\vnorm#1!{{\|#1\|}} %general symbol for norm
\def\supnorm#1!{{\vnorm#1!}_\infty}
\def\lipnorm#1!{{\vnorm#1!}_{\lipalg X.}} %norm on the Lipschitz algebra
\def\plipnorm#1!{{\vnorm#1!}_{\plipalg X.}}%norm on the pointed Lipschitz algebra
\def\arenseels#1.{{\rm AE}[#1]}%Arens-Eels space
\def\realspan#1.{\real\langle#1\rangle}% real span
\def\genset{\left\{g_j\right\}_{j=1}^M} %generating set
\def\biglip{\pounds} %local lipschitz constant
\def\varlip{{\rm Var}} %variation over a ball 
\def\smllip{\ell} %minimal variation
\def\elleoneloc#1.{{\rm L}_{\rm loc}^1(#1,\meas)}%L^1_loc
\def\precise#1{{#1}^\bigstar}%defines the precise representative
\DeclareMathOperator\avint{\int\!\!\!\!\!--}
\def\ball#1,#2.{B(#1,#2)} % ball: #1 is the centre, #2 is the radius
\def\clball#1,#2.{\bar B(#1,#2)} % closed ball: #1 is the centre, #2 is the radius
\def\dervec{D_1,\cdots,D_n}
\def\chartfuns{\{x_\alpha^j\}_{j=1}^{N_\alpha}}%set of chart functions
\newcommand{\chartfun}[1][j]{x_\alpha^j}%individual chart function
\newcommand{\cotbund}[1][X]{T^*#1}%cotangent bundle
\def\chart{(X_\alpha,\chartfuns)}%symbol for the charts of a measurable differentiable structure
\def\seccot#1.{{\Gamma(\cotbund[#1])}}%collection of measurable sections of the cotangent bundle
\newcommand{\chartder}[1][]{\frac{\partial #1}{\partial x_\alpha^j}}%partial derivative with respect to a chart function
\def\subalg{\mathcal{A}}%this is the symbol for a subalgebra, it is employed just once in the sequel
\DeclareMathOperator{\rank}{rank}%rank of a matrix
\def\epsi{\varepsilon}%symbol for \varepsilon
\def\rankcoll{\mathcal{R}}%collection of subsets with a given rank, used in the proof of one of the theorems.
\def\rankder{\mathcal{S}_{D'}}%set used in the proof of one of the theorems in the section about the linear algebra of derivations
\def\realproj#1{\mathbb{RP}^{#1}}% symbol for the real projective space
\def\innominato{index} %Kleiner asked to find a better word for rank, I will use a macro till we find it
\def\gensetinfty{\left\{g_j\right\}_{j=1}^\infty}%countable generating set
\def\dual#1.{{{#1}^{\star}}}%dual of a Banach space X
\DeclareMathOperator\sgn{sgn}%symbol for the sign operator
\date{\today}
\title[Derivations and differentiable structures]{On the relationship between derivations and measurable
  differentiable structures on metric measure spaces}
\author{Andrea Schioppa}
\begin{document}
\numberwithin{equation}{section}
\theoremstyle{plain} %to change the headings as suggested by Kleiner
\newtheorem{lem}[equation]{Lemma}
\newtheorem{prop}[equation]{Proposition}
\newtheorem{thm}[equation]{Theorem}
\newtheorem{cor}[equation]{Corollary}
\theoremstyle{definition}
\newtheorem{defn}[equation]{Definition}
\theoremstyle{plain}
\newtheorem*{Free}{Theorem \ref{freemodules}}
\newtheorem*{Finite}{Theorem \ref{der-finite-dimensionality}}
\newtheorem*{Choice}{Theorem \ref{choice}}
\maketitle
\begin{abstract}
  We investigate the relationship between measurable differentiable
structures on doubling metric measure spaces and derivations. We
prove: 
\begin{enumerate}
\item a decomposition theorem for the module of derivations into
free modules;
\item the existence of a measurable differentiable
structure assuming that one can control the pointwise upper Lipschitz constant
of a function through derivations;
\item an extension of a result of
Keith about the choice of chart functions.
\end{enumerate}
\end{abstract}
\section{Introduction}
The extension of first order calculus to metric measure spaces which
are not smooth has
been a topic of research in the last decade. The search for
regularity conditions on a metric measure space allowing to
generalize results and concepts of first order calculus, 
for example the notions of
derivative and gradient, has been a topic of intensive research.
We refer the reader to the survey \cite{heinonen07} for more details.
A fundamental result about the geometry of Lipschitz
functions on Euclidean spaces is the Rademacher Differentiation
 Theorem which asserts that
a Lipschitz function is differentiable a.e.~with
respect to the Lebesgue measure. 
\subsection*{The Rademacher Differentiation Theorem for metric measure
spaces}
In \cite{cheeger99} Cheeger found an
extension of this result to doubling metric measure spaces
which admit a weak version of the Poincar\'e inequality in the sense 
presented by Heinonen and
Koskela in \cite{heinonen98, heinonen_analysis}. The starting point of this generalization is the introduction of a notion of linear
independence of Lipschitz functions at a point (compare Definition \ref{loc_ind_lip}). Because of the
 the Poincar\'e inequality it is possible to prove that there is a uniform bound on
the number of Lipschitz functions that are linearly independent on
a set of positive measure. Such kind of {\bf finite dimensionality}
result can be interpreted as a Rademacher Differentiation Theorem and
used to introduce the notion of a {\bf measurable differentiable
  structure} which allows to take {\bf partial derivatives} with
respect to {\bf chart functions} (see Section \ref{section_mds}).
In \cite{keith04} Keith found a weaker
condition, the  ``Lip-lip'' inequality, which implies the existence of
a measurable differentiable structure. This condition can be
interpreted as a constraint  on the oscillation of a Lipschitz
function at small scales. The oscillation is of course dependent on the scale, but the
``Lip-lip'' inequality prevents a Lipschitz function from oscillating a lot on
some scales and very little on others. For another account of this
result we refer the reader to \cite{kleiner_mackay}. Keith showed also in
\cite{keith04bis} that chart functions can be chosen among distances
from points. His argument assumed a Poincar\'e inequality and
exploited Sobolev spaces techniques.
\subsection*{Derivations}
 It is perhaps surprising that one can introduce a notion of
``derivatives'' on metric measure spaces without requiring much regularity on the metric space
(but the construction can then become trivial). In \cite{weaver00}
Weaver introduced a concept of {\bf derivation} (closely related to
derivations of Banach algebras) which extends the concept 
of a measurable vector field on a (Lipschitz) manifold to a metric
measure space. Cheeger and
Weaver proved that the two constructions agree for the spaces
considered in \cite{cheeger99}: details can be found in \cite[sec.~5,
example F]{weaver00}. However, the relationship between measurable
differentiable structures and derivations is still unclear. We were
motivated to study this relation by the work of Gong \cite{gong11} which
produces bounds on the number of independent derivations on a
doubling metric measure space and recovers a finite dimensionality
result from a ``Lip-derivation'' inequality. 
\subsection*{Main Results}
We summarize here the main results of this work and refer the reader to the corresponding sections for explanations of the terminology. The first result concerns the linear algebra of the derivation module.
\begin{Free}
  Suppose that the module of derivations $\dermodsp$ has \innominato\ locally bounded
  by $N$. Then there is a measurable partition
  \begin{equation*}
    X=X_0\sqcup\cdots\sqcup X_N\sqcup\Omega,
  \end{equation*}
  such that:
  \begin{itemize}
  \item $\mu(\Omega)=0$;
  \item if $X_i\ne\emptyset$ the
    $\elleinfty X_i.$ module $\dermod[X_i]$
  is free of rank $i$.
  \end{itemize}
A basis for $\dermod[X_i]$ will be called a {\bf local basis
  of derivations}.
\end{Free}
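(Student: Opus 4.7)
The plan is to first reduce the global decomposition to a pointwise rank statement. I would define a measurable rank function $r: X \to \{0, 1, \ldots, N\}$, where $r(x)$ encodes the maximum number of derivations in $\dermodsp$ that are pointwise linearly independent at $x$ in a $\mu$-essential sense. The hypothesis that $\dermodsp$ has \innominato\ locally bounded by $N$ guarantees $r \le N$ almost everywhere. Measurability of $r$ can be obtained by producing a countable family of derivations rich enough to detect all pointwise dependencies (e.g.\ combining a countable generating family with a dense countable subalgebra of scalars), which lets one express $\{r \ge k\}$ as a countable union of measurable sets of the form \emph{a prescribed $k$-tuple of derivations is pointwise independent at $x$}, each captured by a determinantal condition on $\elleinfty X.$-valued entries. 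Setting $X_i := r^{-1}(i)$ produces the desired partition up to a $\mu$-null set $\Omega$.

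Next I would construct an explicit basis on each $X_i$ of positive measure by an exhaustion argument internal to $X_i$. Pick $D_1 \in \dermodsp$ whose pointwise support within $X_i$ has maximal measure, and replace it by its restriction to that support. Inductively, having chosen $D_1, \ldots, D_{k-1}$, pick $D_k$ so that $D_1, \ldots, D_k$ are pointwise independent on a subset of $X_i$ of maximal measure. Because $r \equiv i$ on $X_i$, the process terminates exactly at step $i$ and, after throwing the leftover negligible set into $\Omega$, yields $D_1, \ldots, D_i$ pointwise independent at every point of $X_i$. Linear independence of $D_1, \ldots, D_i$ in the $\elleinfty X_i.$-module $\dermod[X_i]$ is then immediate.

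The main obstacle lies in verifying that $D_1, \ldots, D_i$ span $\dermod[X_i]$ as an $\elleinfty X_i.$-module \emph{with bounded coefficients}. For an arbitrary $D \in \dermod[X_i]$, pointwise linear algebra (Cramer's rule applied to a measurable family of $i \times i$ systems built from the action of the $D_j$'s on chart-like Lipschitz functions) furnishes unique measurable coefficients $f_1, \ldots, f_i$ with $D = \sum_j f_j D_j$ pointwise a.e.\ on $X_i$; the point is to show each $f_j \in \elleinfty X_i.$. I would argue by contradiction: if some $f_j$ were unbounded, restrict to the positive-measure set $A_n := \{|f_j| > n\}$, rescale $D$ there by $1/|f_j|$, and extract (via a weak-$*$ compactness argument in the $\elleinfty.$-module of derivations) a nontrivial derivation whose existence would either violate the maximality of $A$ used to select $D_k$ or contradict the local bound on \innominato. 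Once this $\elleinfty X_i.$-bound is secured, freeness of rank $i$ follows: the basis $\{D_1, \ldots, D_i\}$ is independent by pointwise independence and generates by the representation just established.

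Finally, I would package all negligible sets arising from the measurability of $r$, from the maximality selections, and from the pointwise linear algebra into a single $\mu$-null set $\Omega$ using countable subadditivity; the resulting partition $X = X_0 \sqcup X_1 \sqcup \cdots \sqcup X_N \sqcup \Omega$ satisfies the stated conclusion.
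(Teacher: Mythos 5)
Your plan rests on a pointwise rank function $r(x)$, and this is where the argument breaks down. A derivation takes values in $\elleinfty X.$, so $Df(x)$ is only defined up to a null set; ``the maximum number of derivations pointwise linearly independent at $x$'' has no meaning at an individual point unless you fix representatives, and Theorem \ref{freemodules} carries no doubling hypothesis, so the Lebesgue-point/precise-representative device used elsewhere in the paper is not available here. Even if you test independence through a countable generating set $\{g_j\}$ of the Lipschitz algebra (so that each derivation is encoded by the vector $(Dg_j(x))_j$ and independence becomes a determinantal condition, as in Propositions \ref{ducentoventidue} and \ref{ducentoventiduebis}), the set $\{r\ge k\}$ is a union over \emph{all} $k$-tuples of derivations, an uncountable family. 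Your fix --- ``a countable family of derivations rich enough to detect all pointwise dependencies'' --- is not given by the hypotheses, and producing one is essentially the content of the theorem: it amounts to a countable set of derivations spanning $\dermodsp$ almost everywhere, which is exactly what the free decomposition delivers. The paper avoids the pointwise rank entirely: for each value $i=N,N-1,\dots,0$ it greedily extracts measurable sets $U$ of positive measure on which $\dermod$ has \innominato\ exactly $i$, each time taking one whose measure exceeds $\tfrac{2}{3}$ of the supremum over the remaining candidates; since the extracted sets are disjoint in a finite measure space, their measures are summable, which forces the supremum over the residual set to zero. This is a purely set-level selection that never evaluates a derivation at a point.

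The spanning step also needs more care than your sketch provides. Granting a pointwise identity $D=\sum_j f_jD_j$ with measurable $f_j$, your proposed contradiction (restrict to $A_n=\{|f_j|>n\}$, rescale by $1/|f_j|$, extract a limit by weak-$*$ compactness) does not obviously produce anything that violates the \innominato\ bound: $\chi_{A_n}D/f_j$ is again a perfectly legitimate derivation, and no conflict with the maximality used to select the $D_k$ is visible. The paper instead uses maximality of the independent set $\{D_1,\dots,D_k\}$ to obtain a dependence relation $\lambda'D'=\sum_j\lambda_jD_j$ with $\lambda'$ nowhere vanishing, restricts to a positive-measure set $V$ on which $|\lambda'|\ge M$ so that division keeps the coefficients in $\elleinfty V.$, and then runs the same greedy exhaustion to cover $X_k$ by such sets and glue the local representations. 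If you want to salvage your outline, replace the pointwise rank by the set-based exhaustion and replace the compactness argument by this localization of the dependence relation.
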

The second result is the existence of a measurable differentiable structure assuming that derivations control the pointwise upper Lipschitz constant $\biglip f$ (defined in Sec.~\ref{sec_biglip}) of the function.
\begin{Finite}
  Let $(X,\metric,\meas)$ be a doubling metric measure space.
  Assume that:
  \begin{itemize}
   \item there are $N$ derivations $\dervec$ and a nowhere vanishing
  $\lambda\in\elleinfty X.$;
   \item for any Lipschitz function $f$, there is a set $\Omega_f$
     such that 
  \begin{align*}
    \mu(\Omega_f)&=0;\\ 
    \max_{j=1,\cdots,N}|D_jf(x)|&\ge\lambda(x)\biglip f(x)\quad\forall
    x\in{}^c\Omega_f;
  \end{align*}
  \end{itemize}
    then $X$ admits of a measurable differentiable structure whose dimension
  is at most $N$. 
\end{Finite}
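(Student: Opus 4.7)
The plan is to reduce the theorem to a \emph{finite dimensionality} statement: any $N+1$ Lipschitz functions $f_0,\ldots,f_N$ on $X$ are linearly dependent, in the sense of Definition \ref{loc_ind_lip}, at $\meas$-almost every point. Once such a uniform bound on the number of independent Lipschitz functions is secured, a measurable differentiable structure of dimension at most $N$ is produced by the now-standard Cheeger--Keith construction: partition $X$ up to a null set according to the maximal local rank of independent Lipschitz functions, and on each piece pick any maximal independent collection as chart functions. The doubling hypothesis enters only at this last step, through a Vitali-type covering argument that verifies the differentiability of an arbitrary Lipschitz function against the selected charts.

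For the finite dimensionality claim, fix Lipschitz $f_0,\ldots,f_N$ and, for $c=(c_0,\ldots,c_N)\in\real^{N+1}$, set $g_c=\sum_ic_if_i$. The main obstacle is that the exceptional set $\Omega_{g_c}$ supplied by the hypothesis depends on $c$, whereas the coefficient witnessing dependence at a point $x$ must be allowed to vary with $x$; a priori one would need to discard uncountably many null sets simultaneously. I bypass this with a countable-density argument. Fix a countable dense set $\{c^{(m)}\}\subset\overline{B}(0,1)\subset\real^{N+1}$ and let $\Omega=\bigcup_m\Omega_{g_{c^{(m)}}}$, which is still $\meas$-null. For any $c^{(m_k)}\to c$, the global Lipschitz constants of $g_{c^{(m_k)}}-g_c$ tend to zero, so the elementary pointwise inequality $|\biglip g(x)-\biglip h(x)|\le\biglip(g-h)(x)$ gives $\biglip g_{c^{(m_k)}}\to\biglip g_c$ uniformly on $X$, while $D_jg_{c^{(m_k)}}(x)\to D_jg_c(x)$ for every $x$ by linearity of $D_j$. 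Passing to the limit in the hypothesis yields
\[
\max_j|D_jg_c(x)|\ge\lambda(x)\biglip g_c(x)\qquad\text{for every }x\notin\Omega\text{ and every }c\in\overline{B}(0,1).
\]

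To close the finite dimensionality step, take any $x\notin\Omega$. Since the $N+1$ vectors $(D_1f_i(x),\ldots,D_Nf_i(x))$ lie in $\real^N$, pointwise linear algebra furnishes a unit vector $c=c(x)\in\real^{N+1}$ with $\sum_ic_iD_jf_i(x)=0$ for every $j$, i.e.\ $D_jg_c(x)=0$. The upgraded inequality, combined with the nowhere vanishing of $\lambda$, then forces $\biglip g_c(x)=0$; since $c\ne 0$, the combination $g_c$ realises the required linear dependence of $f_0,\ldots,f_N$ at $x$, completing the reduction and hence the theorem.
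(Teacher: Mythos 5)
Your argument is correct and is essentially the paper's own proof: the same reduction to Lemma \ref{finite-dimensionality}, the same countable-dense-coefficient device to tame the uncountably many exceptional sets (this is exactly Lemma \ref{continuity}), and the same pointwise linear algebra placing $N+1$ vectors in $\real^N$. The only point you gloss over is that ``$D_jg_{c^{(m_k)}}(x)\to D_jg_c(x)$ for every $x$'' requires fixing representatives so that each $D_j$ acts linearly \emph{pointwise}, simultaneously for all coefficient vectors (and so that the hypothesis holds for those representatives); the paper handles this with the precise representative $\precise{D_j}$ and Proposition \ref{lebder}, and after discarding the corresponding countably many null sets your limit passage goes through verbatim.
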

The third result extends the result of \cite{keith04bis} about the choice of the chart functions.
\begin{Choice}
 Suppose the doubling metric measure space
  $(X,\metric,\meas)$ admits a measurable differentiable structure and that
  for each chart  $\chart$ the partial derivatives are derivations.
  If 
  $\genset$ is a generating set for the Lipschitz algebra $\lipalg X.$,
   the charts can be chosen so that the chart functions belong to
  $\genset$.
\end{Choice}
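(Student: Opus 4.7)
The plan is to refine each chart $(X_\alpha,\chartfuns)$ of the given structure into finitely many sub-charts whose chart functions lie in $\genset$. Write $D_k^\alpha:=\partial/\partial x_\alpha^k$; by hypothesis $D_k^\alpha\in\dermod[X_\alpha]$, and the chart property gives, for every $f\in\lipalg X.$ and $\meas$-a.e.\ $x\in X_\alpha$,
\begin{equation*}
  f(y)-f(x) = \sum_{k=1}^{N_\alpha} D_k^\alpha f(x)\bigl(x_\alpha^k(y)-x_\alpha^k(x)\bigr) + o(\dist x,y.).
\end{equation*}
Specialising to $f=x_\alpha^m$ and using the uniqueness of coefficients yields $D_k^\alpha x_\alpha^m=\delta_{km}$ almost everywhere.

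The crux is the spanning claim: for $\meas$-a.e.\ $x\in X_\alpha$ the vectors $v_j(x):=\bigl(D_1^\alpha g_j(x),\dots,D_{N_\alpha}^\alpha g_j(x)\bigr)\in\real^{N_\alpha}$, $j=1,\dots,M$, span $\real^{N_\alpha}$. If this failed on a set $E\subset X_\alpha$ of positive measure, measurable selection from the orthogonal complement of $\mathrm{span}\{v_j(x)\}_j$ would furnish $\xi_1,\dots,\xi_{N_\alpha}\in\elleinfty X_\alpha.$, supported in $E$ and not identically zero, with $\sum_k \xi_k(x) D_k^\alpha g_j(x)=0$ for every $j$ and $\meas$-a.e.\ $x$. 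The $\elleinfty$-linear combination $D:=\sum_k \xi_k D_k^\alpha\in\dermod[X_\alpha]$ then annihilates each $g_j$; by the Leibniz rule it annihilates every polynomial in $\genset$, and since $\genset$ generates $\lipalg X.$ and Weaver derivations are weak-$\star$ continuous, $D$ must annihilate all of $\lipalg X.$. Applying $D$ to $x_\alpha^k$ gives $\xi_k=Dx_\alpha^k\equiv 0$, a contradiction.

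With spanning in hand, for each ordered tuple $J=(j_1,\dots,j_{N_\alpha})\subset\{1,\dots,M\}$ set
\begin{equation*}
  X_\alpha^J:=\bigl\{x\in X_\alpha : \det\bigl(D_k^\alpha g_{j_\ell}(x)\bigr)_{k,\ell=1}^{N_\alpha}\ne 0\bigr\};
\end{equation*}
these cover $X_\alpha$ modulo a $\meas$-null set, so ordering the $J$'s and taking successive differences yields a measurable partition $X_\alpha=\bigsqcup_J Y_\alpha^J$ (modulo null). On each nonempty $Y_\alpha^J$, applying the original chart's differentiation to each $g_{j_\ell}$ and inverting the pointwise-invertible matrix expresses $x_\alpha^k(y)-x_\alpha^k(x)=\sum_\ell B_{k\ell}(x)\bigl(g_{j_\ell}(y)-g_{j_\ell}(x)\bigr)+o(\dist x,y.)$ for a measurable matrix $B$; substituting into the expansion of an arbitrary Lipschitz $f$ exhibits $f$ as differentiable with respect to $\{g_{j_\ell}\}_\ell$, with uniqueness of coefficients inherited from the invertibility of $B$. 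Thus $(Y_\alpha^J,\{g_{j_\ell}\}_\ell)$ is a valid chart, and the collection over all $(\alpha,J)$ with nonempty $Y_\alpha^J$ is the desired atlas.

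The main obstacle is the spanning claim: it converts the algebraic hypothesis on $\genset$ into a pointwise linear-algebra statement on the $D_k^\alpha g_j$. The translation hinges on the combination of the $\elleinfty$-module structure on $\dermod[X_\alpha]$, the Leibniz rule, and weak-$\star$ continuity — the three defining features of Weaver derivations without any of which the argument would break down.
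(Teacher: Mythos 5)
Your argument is correct in substance and shares its core with the paper's proof: both reduce the theorem to showing that the matrix $\bigl(D_k^\alpha g_j(x)\bigr)$ has full rank $N_\alpha$ at a.e.\ $x\in X_\alpha$, and both extract this from the same three ingredients (the $\elleinfty X_\alpha.$-module structure, the Leibniz rule, and weak-$\star$ sequential continuity). Your ``spanning claim'' is exactly Proposition \ref{ducentoventidue} applied to the partial derivatives, which are linearly independent because $D^\alpha_k x^m_\alpha=\delta_{km}$. Where you genuinely diverge is the finishing move. The paper, after normalising to $D'_ig'_j=\delta_{ij}$ on a subset $V$ via Corollary \ref{orthogonality}, invokes the localized derivation inequality (Theorem \ref{rever-der-ineq}, through Lemma \ref{continuity2}) to get $\max_i|c_i|\le\max_j\vnorm D'_j!\,\biglip\bigl(\sum_ic_ig'_i\bigr)(z)$, i.e.\ a.e.\ independence of the $g'_i$ in the sense of Definition \ref{loc_ind_lip}, and then relies on the principle that $N_\alpha$ a.e.-independent functions on an $N_\alpha$-dimensional chart form a chart. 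You instead verify the chart expansion directly, inverting $A_{\ell k}(x)=D^\alpha_kg_{j_\ell}(x)$ inside the first-order expansions and substituting; this is more self-contained (it bypasses both the localized derivation inequality and the independence-implies-chart principle) at the cost of redoing some measurable linear algebra. Two points to tighten. First, the sets $Y^J_\alpha=\{\det A\ne0\}$ must be refined further into countably many pieces where $|\det A|\ge 1/m$: the definition of a chart requires the new partial derivatives $\sum_k(A^{-1})_{k\ell}\,\partial f/\partial x^k_\alpha$ to lie in $\elleinfty X_\alpha.$, and a nonvanishing determinant does not make $A^{-1}$ essentially bounded; the paper performs exactly this refinement inside Corollary \ref{orthogonality}. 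Second, your uniqueness argument for the new coefficients tacitly uses that the original chart functions are independent at a.e.\ point in the sense of Definition \ref{loc_ind_lip}; this is standard but should be said, since the uniqueness clause in the definition of a differentiable structure is a priori a statement about measurable coefficient functions rather than a pointwise one.
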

\subsection*{Organization of the paper}
In Section
\ref{sec_lin_algebra}
 we clarify the linear algebraic structure of the module of
derivations. The module
of derivations is a module over ${\rm L}^\infty$. The ring ${\rm
  L}^\infty$ is not an integral domain and some care must be taken in
introducing notions like ``basis'' or ``rank''. We decided to restrict
the term ``basis'' to the case in which the module is free and replace
``rank'' by \innominato\ (so the terminology is different from that used in \cite{gong11}).  In particular, we present a condition to obtain 
 a decomposition of the
module of derivations into free modules by finding a measurable
partition of the metric measure space (Theorem \ref{freemodules}). 
\par In Section \ref{sec_biglip} we recall some  results about the local
Lipschitz constants of a function. We then prove, assuming that the
metric space is doubling, the localized derivation inequality
\eqref{rev-der-ineq} which, roughly speaking, says that if we apply a
derivation $D$ to a Lipschitz function $f$, the size of $Df$ is
locally controlled by the local Lipschitz constant $\biglip f$. 
\par In
Section 4 we recall background material about measurable
differentiable structures. In particular, to a metric measure space
with a measurable differentiable structure it is possible to associate
a measurable cotangent bundle and use this to construct (reflexive)
Sobolev spaces $\introlipsob$ for $p>1$. We choose a different class of
Sobolev spaces from that employed by Cheeger because the minimal upper
gradient might become trivial if the space lacks enough rectifiable
curves. An example to keep in mind is a positive measure Cantor set in
$[0,1]$: it has a measurable differentiable structure as it is a
positive measure subset of $[0,1]$ but the corresponding $\introlipsob$
does {\bf not inject} into the corresponding $\introellep$ space. The
question of injectivity is closely related to the closability of the
exterior differential $d$ coming from the measurable differentiable
structure (Proposition \ref{d:closabilityprop}). 
\par In Section
\ref{sec_fin_dime} we present a finite dimensionality result, i.e.~the
existence of a measurable differentiable structure, assuming that the
metric measure space is doubling and the ``reverse infinitesimal
derivation inequality'' \eqref{derivation-inequality} holds. This condition,
roughly speaking, says that there are sufficiently many derivations to
control the size of the local Lipschitz constant $\biglip f$ up to an
${\rm L}^\infty$ conformal factor $\lambda $ (uniform in the sense that does
not depend on the Lipschitz function). The reverse infinitesimal
derivation inequality should be compared with the ``Lip-derivation''
inequality of \cite{gong11}. Our argument differs from that used by
Gong to prove finite dimensionality as we do not use an embedding into
Euclidean space but we exploit the linear algebraic structure of the
derivation module and the localized derivation inequality. 
\par In Section
\ref{choice_sec} we extend the results of Keith \cite{keith04bis} about
the choice of the chart functions. We first present a representation
formula \eqref{eq:representation} of derivations in terms of partial
derivatives. We then show that if the partial derivatives are
derivations the existence of a measurable differentiable
structure is {\bf equivalent} to the reverse infinitesimal derivation
inequality. We give sufficient conditions for partial derivatives to
be derivations but we are not able to settle the question as Sobolev
space techniques seem insufficient if $\introlipsob$ does not inject in
$\introellep$. We then generalize the result of Keith on the choice of
chart functions (Theorem \ref{choice}) using Lipschitz algebra
techniques. 
\par While writing this note we felt that it might have been
useful to provide some material about Lipschitz algebras and
derivations 
%as we work with an (apparently) different notion of derivations
%from that used in \cite{weaver00}. 
This can be found in the Appendix.
\section{Derivations and Linear Algebra} \label{sec_lin_algebra}
In this section we
first recall the definition and some properties of the Lipschitz
algebra $\lipalg X.$ of a metric space $(X,\metric)$. We then recall
the definition of the $\elleinfty X.$-module $\dermod[X]$ of
derivations of a metric measure space $(X,\metric,\meas)$. Derivations
form a module over the ring of essentially bounded functions. We
proceed to investigate the algebraic structure of this module using
linear algebra and measure theory.  In particular, we give conditions
to decompose the module of derivations into free modules over
``smaller rings'' $\elleinfty U.$ where $U\subset X$ has positive
measure (Theorem \ref{freemodules}). An example to keep in mind is that of smooth vector fields
defined on a smooth manifold $M$. In that case one replaces $\lipalg X.$
 by the algebra of bounded smooth functions and $\dermod[X]$ by the
$C^\infty(M)$-module of smooth vector fields. 
\begin{defn}[Lipschitz Algebra]
  Let $(X,\metric)$ be a metric space. We denote the collection of
  bounded Lipschitz functions on $(X,\metric)$ with values in $\real$
  by $\lipalg X.$. The set $\lipalg X.$ is a real algebra where multiplication
  is defined as follows:
 if $f,g\in\lipalg X.$,
  \begin{equation}
    (fg)(x)=(f(x))(g(x)).
  \end{equation}
\end{defn}
\begin{defn}
  For a Lipschitz function $f:X\to\real$ we denote by $\glip f.$ its
  global Lipschitz constant:
  $$
  \glip f. = \sup_{x\ne y}\frac{|f(x)-f(y)|}{\metric(x,y)}.
  $$
 For $f\in\lipalg X.$ we define the norm
  \begin{equation}
    \lipnorm f! = \supnorm f! \vee \glip f..
  \end{equation}
  This gives $(\lipalg X.,\lipnorm\cdot!)$ the structure of
 a Banach algebra \cite[sec.~4.1]{weaver_book99}. 
\end{defn}
As Weaver points out in \cite[sec.~4.1]{weaver_book99}, the
 term ``Banach algebra'' is used slightly differently in this context as
$\lipalg X.$ is actually bi-Lipschitz to a Banach algebra in the usual
sense. An important property of $\lipalg X.$ is that it is a dual
Banach space and it has a {\bf unique predual}.
The are two approaches to prove this result. The first
approach uses the de Leeuw's map \cite{deleeuw61}. The second approach
gives an explicit description of the dual space in terms of the
Arens-Eells space \cite{arens_eels56}. For more information we refer
the reader to \cite[chap.~2]{weaver_book99} and the Appendix. As $\lipalg
X.$ is a dual Banach space with a unique predual, 
we can consider {\bf the weak* topology} on it. It turns out that
$f_n\to f$ in the weak* topology if and only if $f_n\to f$ uniformly on bounded
subsets and if $\sup_n\glip f_n. <\infty$.
\begin{defn}
  A set $\genset\subset\lipalg X.$ is called a
  {\bf generating set} for $\lipalg X.$ if the subalgebra
  generated by it is weak* dense in $\lipalg X.$.
\end{defn}
An important result connected to the previous definition is
the Stone-Weierstra\ss\ Theorem \ref{stone-weierstrass}.
The following definition of derivations differs from that of Weaver
\cite{weaver00}. However, for a separable metric space 
 the two definitions agree (see the
Appendix). The point is that we require weak* continuity of
derivations just with respect to sequences.
\begin{defn}[Derivations]\label{derivationsdef}
  Let $(X,\metric,\meas)$ be a metric measure space. A map
  \begin{equation}
    D:\lipalg X.\to\elleinfty X.,
  \end{equation}
  is called a {\bf derivation} if:
  \begin{itemize}
  \item is linear and bounded;
  \item satisfies the Leibniz rule
    \begin{equation}
      D(fg)(x)=Df(x)g(x)+f(x)Dg(x);
    \end{equation}
    \item if $f_n\to f$ in the weak* topology in $\lipalg X.$, then
      $Df_n\to Df$ in the weak* topology in $\elleinfty X.$.
  \end{itemize}
  The set of all derivations is denoted by $\dermodsp$ and is an
  $\elleinfty X.$-module. If we restrict
  the Lipschitz functions to a measurable subset of $U\subset X$ we
  denote the $\elleinfty U.$-module of derivations by $\dermod$. % The set $\dermodsp$ is an
  % $\elleinfty X.$ module and $\dermod$ is an $\elleinfty U.$ module.
\end{defn}
 In the subsequent sections we will often assume, to simplify
the notation in the proofs,
 that derivations are defined on bounded spaces with
finite measure. An alternative way would have been to modify the
definition of derivations so that they are defined on the full Lipschitz
algebra $\lipfun X.$ as maps
$$
D:\lipfun X.\to{\rm L}^\infty_{\rm loc}(X,\meas).
$$
For $U\subset X$, $\mu(U)>0$, $\elleinfty U.$ is a commutative ring
with unity. The subset
\begin{equation}
  {\mathcal V}^\infty(U,\meas)=\left\{f\in\elleinfty U.:
  \mu\left(\{x\in U: f(x)=0\}\right)=0\right\} 
\end{equation}
consists of those functions which are {\bf nowhere vanishing}.
We define the set ${\mathcal V}^\infty_M(U,\meas)$ of those functions
whose absolute value is a.e.~bounded from below by $M>0$:
\begin{equation}
  {\mathcal V}^\infty_M(U,\meas)=\left\{f\in\elleinfty U.:
  \mu\left(\{x\in U: |f(x)|<M\}\right)=0\right\}. 
\end{equation}
Note that $f\in\elleinfty U.$ is a unit if and only if
$f\in{\mathcal V}^\infty_M(U,\meas)$ for some $M>0$.
\begin{lem}\label{meas:dep}
  Let $V_1,\cdots,V_M:X\to {\mathcal V}$ be Borel maps, with
  $({\mathcal V},\vnorm\cdot!)$ a normed vector space and $\vnorm
  V_i!\in\elleinfty X.$. Suppose that $\mu(A)>0$ and $\forall x\in A$
  \begin{equation}
    \realspan V_1(x),\cdots,V_M(x).\qquad\text{(real span)}
  \end{equation}
  has dimension $M-1$;
  then there are functions $\lambda_i\in\elleinfty A.$ 
  such that $${\vnorm \lambda_i!}_{\elleinfty A.}\le1$$ and
  \begin{align}
    \sum_{i=1}^M\lambda_i(x)V_i(x)&=0\quad\text{for a.e.~$x\in A$} \\
    \mu\left(\left\{x:\forall i,\lambda_i(x)=0\right\}\right)&=0.
  \end{align}
Furthermore, if  $\forall x\in A$
  \begin{equation}\label{ducentoquindici}
    \realspan V_1(x),\cdots,V_M(x).=\realspan V_1(x),\cdots,V_{M-1}(x).,
  \end{equation}
then $\lambda_{M}\in{\mathcal V}^\infty(A,\meas)$.
\end{lem}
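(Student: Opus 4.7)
The plan is a case analysis on which $(M{-}1)$-subset of the vectors $\{V_i(x)\}$ is linearly independent. For each $k$, let $A_k\subset A$ be the set where $V_1(x),\ldots,V_M(x)$ with $V_k(x)$ omitted are linearly independent. Since the pointwise span has dimension $M-1$, one can always select $M-1$ independent vectors from these, so $A=A_1\cup\cdots\cup A_M$; I would then replace this by the disjoint partition $B_k=A_k\setminus\bigcup_{j<k}A_j$. Each $A_k$ is Borel because linear independence of $\{V_i(x)\}_{i\ne k}$ is equivalent to
\[ \inf\bigl\{\|\textstyle\sum_{i\ne k}a_iV_i(x)\|:a\in S^{M-2}\cap\mathbb{Q}^{M-1}\bigr\}>0, \]
a countable condition on functions of $x$ that are Borel by composition of the Borel maps $V_i$ with continuous operations in $\mathcal{V}$.

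On each $B_k$ the vector $V_k(x)$ lies uniquely in $\mathrm{span}\{V_i(x)\}_{i\ne k}$, giving unique scalars $c^{(k)}_i(x)$ with $V_k(x)=\sum_{i\ne k}c^{(k)}_i(x)V_i(x)$. The main technical point is that these coefficients are Borel, which I would argue by a uniqueness-of-minimizer trick: for any compact $C\subset\real^{M-1}$, the set $\{x\in B_k:c^{(k)}(x)\in C\}$ equals $\{x\in B_k:\inf_{a\in C\cap\mathbb{Q}^{M-1}}\|V_k(x)-\sum_{i\ne k}a_iV_i(x)\|=0\}$, which is Borel since the integrand is Borel in $x$ for each fixed $a$. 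With the $c^{(k)}_i$ in hand I set $\tilde\lambda_k=1$ and $\tilde\lambda_i=-c^{(k)}_i$ for $i\ne k$ on $B_k$, and normalize by $m(x)=\max_i|\tilde\lambda_i(x)|\ge 1$, defining $\lambda_i=\tilde\lambda_i/m$ (extended by $0$ outside $A$). Then $\|\lambda_i\|_{\elleinfty A.}\le 1$, the identity $\sum_i\lambda_i V_i=0$ holds pointwise, and $\max_i|\lambda_i(x)|=1$ on all of $A$, so the set where every $\lambda_i$ vanishes is empty.

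For the furthermore part, assumption \eqref{ducentoquindici} forces $V_1(x),\ldots,V_{M-1}(x)$ to be linearly independent at every $x\in A$ (otherwise the total span would drop below dimension $M-1$), so $A=A_M$ in the construction above. Consequently $\tilde\lambda_M\equiv 1$ and $\lambda_M(x)=1/m(x)>0$ pointwise, giving $\mu\{x\in A:\lambda_M(x)=0\}=0$, i.e.\ $\lambda_M\in\mathcal{V}^\infty(A,\meas)$. The only genuine obstacle is the Borel measurability of the coefficients $c^{(k)}_i$, for which the uniqueness-of-minimizer argument seems the most robust; everything else is linear-algebraic bookkeeping and a routine normalization.
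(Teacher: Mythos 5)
Your argument is correct in substance but takes a genuinely different route from the paper. The paper observes that the kernel of $a\mapsto\sum_i a_iV_i(x)$ is a line, packages the choice of that line as a map $\Lambda:A\to\mathbb{RP}^{M-1}$, proves $\Lambda$ is Borel by testing preimages of closed sets against an auxiliary function $G(\sigma,x)$, and then lifts $\Lambda$ on the $M$ affine charts of projective space to obtain coefficients bounded by $1$. You instead decompose $A$ according to which $(M-1)$-element subset of $\{V_i(x)\}$ is independent and solve explicitly for the omitted vector, carrying the measurability through the sets $A_k$ and the coefficient functions $c^{(k)}_i$. Both are measurable-selection arguments for the same one-dimensional kernel; yours is more elementary and avoids projective space at the cost of a case decomposition and an explicit normalization, while the paper's formulation is the one that generalizes directly (the author remarks that the argument extends to Grassmannians).

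Two small points need repair. First, the asserted identity $\{x\in B_k:c^{(k)}(x)\in C\}=\{x\in B_k:\inf_{a\in C\cap\mathbb{Q}^{M-1}}\|V_k(x)-\sum_{i\ne k}a_iV_i(x)\|=0\}$ is false for an arbitrary compact $C$: take $C$ a single point with an irrational coordinate, so that $C\cap\mathbb{Q}^{M-1}=\emptyset$ and the infimum is $+\infty$. The identity does hold, by exactly your compactness-plus-uniqueness argument, whenever $C\cap\mathbb{Q}^{M-1}$ is dense in $C$, e.g.\ for closed boxes with rational endpoints; since such boxes generate the Borel $\sigma$-algebra of $\real^{M-1}$, this suffices for Borel measurability of $c^{(k)}$. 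Second, in the ``furthermore'' part, $A=A_M$ does not by itself give $\tilde\lambda_M\equiv1$: with your ordering $B_k=A_k\setminus\bigcup_{j<k}A_j$, points can still land in $B_k$ for $k<M$ (other $(M-1)$-subsets may also be independent), and there $\tilde\lambda_M=-c^{(k)}_M$. Either run the construction with the single piece $B_M=A$, which hypothesis \eqref{ducentoquindici} permits, or note that on $B_k$ with $k<M$ the independence of $V_1,\dots,V_{M-1}$ forces $c^{(k)}_M\ne0$, since otherwise $V_k$ would lie in the span of the remaining vectors among the first $M-1$. Either way $\lambda_M$ is nowhere vanishing, and the rest of your bookkeeping is sound.
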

\begin{proof}
  For almost every point $x\in X$ we have  $\left\{\alpha_1(x),\cdots
  \alpha_M(x)\right\}\subset\real$ such that
  \begin{equation}\label{ducentosedici}
 \left(\alpha_1(x),\cdots,
  \alpha_M(x)\right)\ne0,
\end{equation} and
$$
    \sum_{i=1}^M\alpha_i(x)V_i(x)=0.
  $$
Note that if the additional hypothesis \eqref{ducentoquindici} holds,
we can assume that $$\alpha_M(x)\ne0.$$ Once the proof is complete, this
will imply that $\lambda_M$ is nowhere vanishing. 
\par  By assumption any other $M$-tuple such that
\eqref{ducentosedici} holds
is a multilple of
$$
  (\alpha_1(x),\cdots,\alpha_M(x)). 
  $$
Therefore, the map
  \begin{align}
    \Lambda:X&\to\realproj {M-1}\quad\text{(real projective space)}\\
    x&\mapsto[(\alpha_1(x),\cdots,\alpha_M(x))];\\
\end{align}
is well-defined (a.e.~as we need the $\vnorm V_i(x)!$ to be finite).
We define
\begin{align}
    G:\realproj{M-1}\times X&\to\real\\
    (\sigma,x)&\mapsto\max\left\{\|
    \sum_{i=1}^M\alpha_iV_i(x)\|:\text{$(\alpha_i)\in\real^M:
        |(\alpha_i)|\le1$ and $[(\alpha_i)]=\sigma$}\right\};
\end{align}
  then $G$ is continuous in $\sigma$ and Borel measurable in $x$.
  To show that $\Lambda$ is Borel measurable, it suffices to show that
  $\Lambda^{-1}(C)$ is a Borel set whenever $C\subset\realproj{M-1}$ is closed.
  If $\{\alpha_i\}\subset C$ be a countable dense subset, then
  \begin{equation}
    \Lambda^{-1}(C)=\bigcap_n\bigcup_i\left\{x: G(\alpha_i,x)<\frac{1}{n}
      \right\}.
  \end{equation}
  Therefore $\Lambda$ is a Borel function. Finally, $\realproj{M-1}$ can be covered
  by $M$ differentiable charts. 
  On each chart $\Lambda$ can be lifted to
  $(\lambda_1,\cdots,\lambda_M)$ 
  with
  $${\vnorm\lambda_i!}_{\elleinfty X.}\le1.$$
\end{proof}
 The previous argument can be generalized involving the
Grassmanian over the complex or the real fields.
\begin{defn}
  The derivations $\left\{D_1,\cdots,D_n\right\}\subset\dermod$ are
  said to be {\bf linearly independent} (over $\elleinfty U.$)
  if for any
  $\left\{\lambda_1,\cdots,\lambda_n\right\}\subset\elleinfty
  U.$,
  \begin{equation}
    \sum_{i=1}^n\lambda_i D_i=0,
  \end{equation}
implies that $\lambda_i=0$. This means that for any choice of the
representatives for the $\lambda_i$, these vanish a.e. 
In the sequel, we have 
not kept the distinction between elements of ${\rm L}^p$-spaces and their
representatives. 
\end{defn}
\begin{defn}[Finite \innominato]
If in $\dermod$
any linearly independent set of derivations has at most $N$ elements,
 $\dermod$ is said to have finite \innominato. The smallest value of $N$ is
 the \innominato\ of
$\dermod$.
\end{defn}
 The previous definition of \innominato\ is an
attempt to generalize the notion of the rank of
a free module. If $\dermod$ were free, rank and \innominato\ would
agree. We now prove an implication of the condition that $\dermod[U]$
has finite \innominato. We first assume that $\lipalg X.$ has a finite
generating set $\genset$ ($M<\infty$). We will then reformulate this
result for the case in which $\genset$ is countable.
\begin{prop}\label{ducentoventidue}
  Suppose that $\{D_i\}_{i=1}^n\subset\dermod$ is a linearly
  independent set.
Suppose $\lipalg U.$ has a finite generating set $\genset$.  Then for
  a.e.~$x\in U$ the 
  row vectors 
  \begin{equation}
    D_ig(x)\equiv (D_ig_1(x),\cdots,D_ig_M(x))
  \end{equation}
are linearly independent. In particular, $M\ge n$.
\end{prop}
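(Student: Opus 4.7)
The plan is to argue by contradiction: assuming the row vectors $V_i(x):=(D_ig_1(x),\dots,D_ig_M(x))\in\real^M$ are linearly dependent on some subset $A\subset U$ of positive measure, I will extract a nontrivial $\elleinfty U.$-linear combination of the $D_i$'s vanishing on the generating set $\{g_j\}$, and then use weak* density of the subalgebra generated by $\{g_j\}$ to upgrade this to the zero derivation, contradicting linear independence over $\elleinfty U.$. The bound $M\ge n$ then follows automatically, since $n$ linearly independent vectors in $\real^M$ force $n\le M$.

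To produce the combination, I partition $A$ by the value of $\rank\{V_i(x)\}_{i=1}^n$ (a Borel function of $x$) and pass to a positive-measure subset on which this rank equals some constant $k\le n-1$; after a measurable relabeling of the indices $i=1,\dots,n$, I may further assume that $V_1(x),\dots,V_k(x)$ are independent while $V_{k+1}(x)$ lies in their span. Applied to the $k+1$ vectors $V_1,\dots,V_{k+1}$, the codimension-one hypothesis \eqref{ducentoquindici} of Lemma \ref{meas:dep} is satisfied, and the lemma produces bounded measurable $\lambda_1,\dots,\lambda_{k+1}$ with $\lambda_{k+1}\in{\mathcal V}^\infty(A,\meas)$ and $\sum_{i=1}^{k+1}\lambda_i(x)V_i(x)=0$ a.e. Setting $\lambda_i=0$ for $i>k+1$ and extending all $\lambda_i$ by zero to $U$, the derivation $D:=\sum_{i=1}^n\lambda_iD_i\in\dermod$ is nontrivial (since $\lambda_{k+1}$ is nowhere zero on a set of positive measure) and satisfies $Dg_j=0$ for every $j$, because the $j$-th coordinate of $\sum_i\lambda_iV_i$ is exactly $Dg_j$.

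It remains to promote $Dg_j=0$ to $D\equiv0$. By linearity $D$ annihilates constants (from $D(1)=2D(1)$) and, together with the Leibniz rule, it annihilates the whole real subalgebra $\mathcal{A}$ generated by $\{g_j\}$ and constants. Since $\mathcal{A}$ is weak* dense in $\lipalg U.$ by hypothesis and $D$ is sequentially weak* continuous, invoking metrizability of the weak* topology on bounded subsets of $\lipalg U.$ (valid in the separable setting in which the Arens--Eells predual is separable, the standing framework of the paper) lets weak* density upgrade to sequential weak* density, whence $D\equiv0$, the desired contradiction.

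The main obstacle is precisely this final passage: because derivations are required to be only \emph{sequentially} weak* continuous, one must reconcile the net-density built into ``generating set'' with the sequence-based continuity hypothesis, either through metrizability of the weak* topology on bounded sets or through a sequential Stone--Weierstra\ss\ statement such as Theorem \ref{stone-weierstrass} in the Appendix. The other steps -- the measurable partition by rank, the relabeling, and the application of Lemma \ref{meas:dep} -- are routine once the setup is in place.
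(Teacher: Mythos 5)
Your proposal is correct and follows essentially the same route as the paper: extract a measurable linear dependence of the vectors $D_ig(x)$ via Lemma \ref{meas:dep}, form the nontrivial derivation $\sum_i\lambda_iD_i$ killing every generator, and use the Leibniz rule plus weak* continuity and density to conclude it vanishes identically, contradicting independence. Your extra care about reconciling sequential weak* continuity with weak* density is legitimate and is resolved exactly as you suggest (the paper's Appendix upgrades sequential weak* continuity to weak* continuity via Krein--\v{S}mulian in the separable setting), so no gap remains.
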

\begin{proof}
  Let $V_i=D_ig$. Suppose that
  there is a measurable $V\subset U$ such that $\meas(V)>0$ and
  $\forall x\in V$
  $$
    \realspan V_1(x),\cdots,V_n(x).
$$ has dimension strictly smaller than $n$. Without loss of generality
we can assume that $\forall x\in V$
$$
\realspan V_1(x),\cdots,V_{k}(x).
$$ has dimension $k-1$. We now apply Lemma \ref{meas:dep} with
$\mathcal{V}=\real^M$
obtaining
$$\left\{\lambda_1,\cdots,\lambda_k\right\}\subset\elleinfty V.
$$ with   \begin{align}
    \sum_{i=1}^{k}\lambda_i(x)V_i(x)&=0 \\
    \mu\left(\left\{x:\forall i,\lambda_i(x)=0\right\}\right)&=0.
  \end{align}
We extend the $\lambda_i$ to $\elleinfty U.$ by setting them equal to
$0$ on $U\setminus V$. The conclusion is that the derivation
$$
D'=\sum_{i=1}^k\lambda_iD_i
$$ maps each $g_i$ to $0$. By the Leibniz rule, $D'=0$ on the algebra
generated by $\genset$. But by weak* continuity,the derivation $D'$ is
trivial on $\lipalg U.$. Therefore the derivations $D_1,\cdots,D_k$
are linearly depedent, contrary to the hypothesis. Note also that as
the $V_i$ have to be linearly independent a.e., $M\ge n$.
\end{proof}\begin{cor}\label{orthogonality}
    Suppose that $\{D_i\}_{i=1}^n\subset\dermod$ is a linear
    independent set.  Let
    $\genset$ be a finite generating set for $\lipalg U.$. Then there are:
      \begin{itemize}
\item a subset of the generators $\{g'_1,\cdots,g'_n\}\subset\genset$;
 \item a measurable $V\subset
  U$ with $\meas(V)>0$
\item
  an invertible $n\times n$ matrix $A$ over the ring $\elleinfty V.$, that is,
  $A=(a_{ij})_{i,j=1,\cdots,n}$ for $a_{ij}\in\elleinfty V.$ such that if
  we define a new set of derivations
   $\{D'_i\}_{i=1}^n\subset\dermod$ by \begin{equation} A\,D\equiv D',
\end{equation}
then $\forall x\in V$
  \begin{equation}\label{ducentotrentuno}
    D'_ig'_j(x)=\delta_{i,j}.
  \end{equation}
      \end{itemize}
\end{cor}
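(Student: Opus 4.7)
The plan is a pointwise Gram-Schmidt style argument driven by Proposition \ref{ducentoventidue}. That proposition already says that the $n \times M$ matrix $B(x) = (D_i g_j(x))$ has rank $n$ for $\meas$-a.e.\ $x \in U$. The corollary is then a matter of selecting $n$ out of the $M$ columns so that the corresponding $n \times n$ minor is invertible on a set of positive measure, and then inverting this block.

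First, for each $n$-element subset $J \subset \{1,\ldots,M\}$ let $B_J(x)$ denote the square submatrix of $B(x)$ with columns indexed by $J$, and set
\[
E_J = \{x \in U : \det B_J(x) \ne 0\}.
\]
Each $E_J$ is measurable (since the $D_i g_j$ are essentially bounded measurable functions, the determinant is a polynomial in them), and by the rank $n$ conclusion of Proposition \ref{ducentoventidue} the finite union $\bigcup_J E_J$ covers $U$ up to a null set. Since the index set is finite, at least one $E_J$ has positive measure; fix such a $J$ and relabel the associated generators as $g'_1, \ldots, g'_n$.

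Second, I would shrink $E_J$ to ensure $|\det B_J|$ stays bounded away from $0$, which is what is needed to invert inside $\elleinfty V.$. Writing $E_J = \bigcup_{k\ge 1} \{x : |\det B_J(x)| > 1/k\}$ and applying countable additivity, some
\[
V := \{x \in E_J : |\det B_J(x)| > 1/k\}
\]
has positive measure. Since each entry $D_i g'_j$ is essentially bounded, Cramer's rule produces $A(x) := B_J(x)^{-1}$ with entries in $\elleinfty V.$; moreover $\det A = 1/\det B_J$ is bounded above and below on $V$, so $A$ is a unit of the matrix ring $M_n(\elleinfty V.)$.

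Finally, defining $D'_i = \sum_k A_{ik} D_k$ (extending the $A_{ik}$ by zero off $V$ if one wishes the $D'_i$ to live in $\dermod$), bilinearity and the identity $A B_J = I$ on $V$ yield
\[
D'_i g'_j(x) = \sum_k A_{ik}(x)\, D_k g'_j(x) = (A(x) B_J(x))_{ij} = \delta_{ij} \quad \text{for every } x \in V,
\]
which is \eqref{ducentotrentuno}. The only delicate step is the second one, the exhaustion needed to guarantee that the inverse matrix has $\elleinfty V.$ entries; everything else is routine finite-dimensional linear algebra over the commutative ring $\elleinfty V.$.
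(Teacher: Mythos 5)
Your argument is correct and follows essentially the same route as the paper: select a subset of generators whose $n\times n$ minor is nonsingular on a positive-measure set (the paper asserts this directly from Proposition \ref{ducentoventidue}, you make the selection explicit over the finitely many column subsets), pass to a subset where $|\det B|$ is bounded below, and invert via Cramer's rule/cofactors to get $A$ with entries in $\elleinfty V.$. The only cosmetic difference is that the paper writes $A=fC$ with $f=1/\det B$ and $C$ the cofactor matrix rather than invoking $B_J^{-1}$ directly; the content is identical.
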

\begin{proof}
  By Proposition \ref{ducentoventidue} for a.e.~$x\in U$ the $D_ig(x)$
  are linearly independent. Therefore there are
  \begin{itemize}
  \item a measurable $U'\subset U$ with $\meas(U')>0$ and
    \item a subset $\{g'_1,\cdots,g'_n\}\subset\genset$,
  \end{itemize}
such that the matrix 
$$
B(x)=    \begin{pmatrix}
      D_1g'_1(x) & \cdots & D_1g'_n(x) \\
      \vdots & \vdots & \vdots \\
      D_ng'_1(x) & \cdots & D_ng'_n(x) \\
    \end{pmatrix}
$$ is non singular for each $x\in U'$. In particular, $\det B\ne0$ on
$U'$ so we can find $\epsi>0$ and 
 $V\subset U'$ with $\meas(V)>0$ and $|\det B|>\epsi$
 on $V$. As $\det B\in {\mathcal V}^\infty_\epsi(V,\meas)$, we have that
 \begin{equation} f(x)=
   \frac{1}{\det B(x)}\in\elleinfty V..
\end{equation} 
If we let $C$ be the cofactor matrix of
$B$ and define new derivations in $\dermod[V]$ by
$$
D'_i=\sum_{j=1}^n f C_{ij} D_j|_V,
$$ we have that \eqref{ducentotrentuno} holds a.e.~in $V$. We finally
let $A=fC$.
\end{proof}
We now discuss the modifications for the case $M=\infty$ by which we mean
that $\genset$ is countable. This assumption is really mild. In fact,
if $X$ is separable, then by the Stone-Weierstra\ss\ Theorem (\ref{stone-weierstrass}) $\lipalg X.$ has a countable generating set. The main
point is to find a normed vector space in which the vectors $D_ig$ lie.
\begin{prop}\label{ducentoventiduebis}
  Suppose that $\{D_i\}_{i=1}^n\subset\dermod$ is a linearly
  independent set.
  Let $\gensetinfty$ be a countable generating set for $\lipalg U.$ with
  \begin{equation}
    \sup_{j=1,\cdots,\infty}\lipnorm g_j! \le C<\infty.
  \end{equation}
  Then for
  a.e.~$x\in U$ the 
  row vectors 
  \begin{equation}
    D_ig(x)\equiv (D_ig_j(x))_{j=1}^\infty\in l^\infty(\mathbb{N})
  \end{equation}
are linearly independent. Furthermore there are
$\{g'_1,\cdots,g'_n\}$, $V$ and $A$ such that the conclusions of
Corollary \ref{orthogonality} hold.
\end{prop}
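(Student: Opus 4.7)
The plan is to reduce to the finite case already handled in Proposition \ref{ducentoventidue} and Corollary \ref{orthogonality}, with two adjustments: we apply Lemma \ref{meas:dep} in the normed space $\mathcal{V}=\ell^\infty(\mathbb{N})$, and we use countable additivity to extract a finite sub-collection of generators on a positive-measure set. The uniform bound $\sup_j\lipnorm g_j!\le C$ and the boundedness of each $D_i$ give $|D_ig_j(x)|\le\vnorm D_i!\cdot C$ for a.e.\ $x$ and every $j$, so the row vector $D_ig(x)=(D_ig_j(x))_j$ is essentially bounded in $\ell^\infty(\mathbb{N})$. This is exactly the data Lemma \ref{meas:dep} wants: Borel maps $V_i=D_ig:U\to\ell^\infty(\mathbb{N})$ with $\vnorm V_i(\cdot)!_{\ell^\infty}\in\elleinfty U.$.

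To prove linear independence a.e.\ of the $D_ig(x)$, proceed by contradiction exactly as in Proposition \ref{ducentoventidue}. If on some positive measure $V\subset U$ the vectors $D_1g(x),\dots,D_ng(x)$ are linearly dependent, we may reduce to a maximal independent subset and assume $D_kg(x)$ lies in $\realspan D_1g(x),\cdots,D_{k-1}g(x).$ on $V$. Lemma \ref{meas:dep} then yields $\lambda_1,\dots,\lambda_k\in\elleinfty V.$, not all identically zero, with $\sum_{i=1}^k\lambda_i(x)D_ig_j(x)=0$ for every $j$ and a.e.\ $x\in V$. Extending the $\lambda_i$ by zero to $U$, the derivation $D'=\sum\lambda_iD_i$ annihilates each $g_j$, hence by the Leibniz rule vanishes on the subalgebra generated by $\gensetinfty$. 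Sequential weak* continuity of $D'$ together with the weak* density of this subalgebra in $\lipalg U.$ force $D'\equiv 0$, contradicting the linear independence of $\{D_i\}$.

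For the conclusion matching Corollary \ref{orthogonality}, note that for a.e.\ $x\in U$ the $\ell^\infty$-vectors $D_ig(x)$ being linearly independent means that there is some finite $J\subset\mathbb{N}$ with $|J|=n$ for which the square submatrix $B_J(x)=(D_ig_j(x))_{i=1,\dots,n;\,j\in J}$ is nonsingular. Set $E_J=\{x\in U:\det B_J(x)\ne 0\}$. Each $E_J$ is measurable, there are only countably many $J$, and $U$ is covered by $\bigcup_J E_J$ up to a null set; hence some $E_J$ has positive measure. Relabel its elements as $\{g'_1,\dots,g'_n\}$, then shrink to $V\subset E_J$ with $|\det B_J|>\epsi$ on $V$, and define $f=1/\det B_J\in\elleinfty V.$ together with the cofactor construction $A=fC$ and $D'_i=\sum_j fC_{ij}D_j|_V$. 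This produces $\{g'_1,\dots,g'_n\}$, $V$ and $A$ satisfying \eqref{ducentotrentuno}.

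The main subtlety to check is the applicability of Lemma \ref{meas:dep} in the non-separable target $\ell^\infty(\mathbb{N})$. Strong Borel measurability of $V_i$ into $\ell^\infty(\mathbb{N})$ is not automatic, but what the lemma's proof actually uses is that $(\sigma,x)\mapsto\max\{\vnorm\sum\alpha_iV_i(x)!:|(\alpha_i)|\le 1,\,[(\alpha_i)]=\sigma\}$ be continuous in $\sigma$ and measurable in $x$. Since $\vnorm\sum_i\alpha_iV_i(x)!_{\ell^\infty}=\sup_j|\sum_i\alpha_iD_ig_j(x)|$ is a countable supremum of measurable functions of $x$ for each fixed $(\alpha_i)$ and continuous in $(\alpha_i)$, the proof of Lemma \ref{meas:dep} goes through verbatim; this is the only place where the infinite-dimensional nature of $\ell^\infty(\mathbb{N})$ needs any care.
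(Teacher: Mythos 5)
Your proof is correct. The first half (a.e.\ linear independence of the row vectors $D_ig(x)$ in $\ell^\infty(\mathbb{N})$) follows the paper exactly: apply Lemma \ref{meas:dep} with $\mathcal{V}=\ell^\infty(\mathbb{N})$ and kill the resulting derivation $D'=\sum\lambda_iD_i$ on the generators, hence on the whole algebra by Leibniz and sequential weak* continuity. Your closing remark about measurability is a genuine improvement in care over the paper: since $\ell^\infty(\mathbb{N})$ is non-separable, strong Borel measurability of $x\mapsto D_ig(x)$ is not automatic, but, as you observe, the proof of Lemma \ref{meas:dep} only ever uses that $(\alpha,x)\mapsto\sup_j|\sum_i\alpha_iD_ig_j(x)|$ is a countable supremum of measurable functions of $x$ and is continuous in $\alpha$; the paper invokes the lemma without comment.

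For the second half you diverge from the paper, and your route is the more direct one. The paper argues by contradiction: if no finite subset of generators yields a nonsingular $n\times n$ matrix on a positive-measure set, then every truncation $T_M(D_ig)$ is linearly dependent a.e., and the nested nullspaces $\Lambda_M(x)\supset\Lambda_{M+1}(x)$ must stabilize at a nonzero subspace (dimensions in $\{0,\dots,n\}$ can only drop integrally), contradicting part one. You instead go forward: pointwise independence of the rows forces some $n\times n$ minor $B_J(x)$ to be nonsingular, the sets $E_J=\{\det B_J\ne0\}$ are measurable and countably many, so one has positive measure, and the cofactor construction of Corollary \ref{orthogonality} finishes. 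The two arguments are contrapositives of one another; yours avoids the truncation operators and the dimension-stabilization step entirely and is shorter, while the paper's version makes explicit why "dependent for every truncation" implies "dependent in $\ell^\infty$", which is the one fact your version absorbs into the single sentence "independence means some finite minor is nonsingular". Both are sound.
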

\begin{proof}
  The proof of the first part is like that of Proposition \ref{ducentoventidue} but we
  take $\mathcal{V}=l^\infty(\mathbb{N})$. For the proof of the second
  part we could argue as in Corollary \ref{orthogonality} provided
  that there are a subset of the generators $\{g'_1,\cdots,g'_n\}$ and
  a set $U'$ of positive measure such that $$ B(x)=    \begin{pmatrix}
      D_1g'_1(x) & \cdots & D_1g'_n(x) \\
      \vdots & \vdots & \vdots \\
      D_ng'_1(x) & \cdots & D_ng'_n(x) \\
    \end{pmatrix} $$ is nonsingular on $U'$. We prove this arguing by
    contradiction. Let $T_M:l^\infty(\mathbb{N})\to l
    ^\infty(\mathbb{N})$ be the truncation map:
    \begin{equation}
      T_M(c_i)_{i=1}^\infty=(c_1,c_2,\cdots,c_M,0,0,\cdots).
    \end{equation} If we cannot find a subset $U'\subset U$ and a
    subset of the generators such that $B$ is nonsingular on $U'$,
    then for each $M$ the vectors $T_M(D_ig)$ are linearly dependent
    a.e. This implies that there is a subset $\tilde U\subset U$ with 
    $\mu(U\setminus \tilde U)=0$ and for each $M\ge1$ and $x\in\tilde
    U$, the vector subspace
    \begin{equation}
    \Lambda_M(x)=\left\{(\lambda_1,\cdots,\lambda_n)\in\mathbb{R}^n:
      \sum_{i=1}^n\lambda_iT_M(D_ig)(x)=0\right\}
  \end{equation}
has dimension at least $1$. Now, $\Lambda_M(x)\supset\Lambda_{M+1}(x)$ and
so
\begin{equation}\label{ducentoventidueventiduebis}
  \bigcap_{M=1}^\infty \Lambda_M(x)\ne\{0\}
\end{equation}
as the dimension of a vector subspace of $\mathbb{R}^n$ has to lie in
$\{ 0,1,\cdots,n\}$ and the dimension of $\bigcap_{M=1}^k\Lambda_M(x)$ can decrease
only by integer values as we increase $k$. But then
\eqref{ducentoventidueventiduebis} would imply that the $D_ig(x)$ are
linearly dependent.
\end{proof}
 Note that in Proposition \ref{ducentoventidue} the role of generators
and derivations is not symmetrical. For example, we can have $M>n$
(trivially
if $M=\infty$ as in Proposition \ref{ducentoventiduebis}). An
example is the standard Cantor set whose Lipschitz algebra is
generated by the single function $x$ and where all derivations are
trivial (see \cite[Section 5.~Examples A]{weaver00}
 or use the localized derivation
inequality \eqref{rev-der-ineq} which is proved in the next
section). Note that the bound $M>n$ for $M$ finite implies the hypothesis of
Theorem \ref{freemodules}.
\begin{defn}[Modules of derivations whose \innominato\ is locally
bounded] We say that the module of derivations $\dermodsp$ has
{\bf \innominato\ locally bounded} by $N$ if for any set $U\subset X$
of positive measure, $\dermod$ has \innominato\ at most $N$ over
$\elleinfty U.$.
\end{defn}
\begin{thm}\label{freemodules}
  Suppose that the module of derivations $\dermodsp$ has \innominato\ locally bounded
  by $N$. Then there is a measurable partition
  \begin{equation}
    X=X_0\sqcup\cdots\sqcup X_N\sqcup\Omega,
  \end{equation}
  such that:
  \begin{itemize}
  \item $\mu(\Omega)=0$;
  \item if $X_i\ne\emptyset$ the
    $\elleinfty X_i.$ module $\dermod[X_i]$
  is free of rank $i$.
  \end{itemize}
A basis for $\dermod[X_i]$ will be called a {\bf local basis
  of derivations}.
\end{thm}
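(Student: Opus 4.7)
The plan is to decompose $X$ by exhaustion, one rank at a time. For each $n \in \{0, 1, \ldots, N\}$, let $\mathcal{F}_n$ be the collection of measurable $V \subset X$ with $\mu(V) > 0$ for which $\dermod[V]$ is free of rank $n$. The heart of the argument is an \emph{extraction lemma}: every positive-measure $V \subset X$ contains a positive-measure $V' \subset V$ lying in some $\mathcal{F}_n$. Granted this, I pick for each $n$ a countable maximal disjoint family $\{V^{(n)}_k\} \subset \mathcal{F}_n$ (possible by $\sigma$-finiteness of $\mu$) and set $X_n := \bigsqcup_k V^{(n)}_k$. Sets in $\mathcal{F}_n$ and $\mathcal{F}_{n'}$ with $n \ne n'$ must be essentially disjoint, since otherwise the intersection would carry a free module of two distinct ranks --- impossible once one observes that the rank of a free $\elleinfty W.$-module coincides with the \innominato. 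Thus the $X_n$ are pairwise disjoint, and the residue $\Omega := X \setminus \bigsqcup_n X_n$ is null: any positive-measure piece of $\Omega$ would, by the extraction lemma, contain a set in some $\mathcal{F}_n$ disjoint from $X_n$, contradicting maximality.

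For the extraction lemma, fix $V$ and let $n$ be the supremum of the \innominato\ of $\dermod[W]$ over positive-measure $W \subset V$, which lies in $\{0, 1, \ldots, N\}$ by hypothesis; since the supremum is taken over integers it is attained by some $W \subset V$. On $W$ both the \innominato\ and the local upper bound equal $n$. Take $n$ linearly independent derivations in $\dermod[W]$ and apply Corollary \ref{orthogonality} (or Proposition \ref{ducentoventiduebis} in the countable-generator case) to produce a positive-measure $V' \subset W$, generators $g'_1, \ldots, g'_n$, and derivations $D'_1, \ldots, D'_n \in \dermod[V']$ with $D'_i g'_j = \delta_{i,j}$ a.e.~on $V'$. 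The $D'_i$ are linearly independent by evaluation at the $g'_j$. To show they span $\dermod[V']$, for an arbitrary $D \in \dermod[V']$ set
\begin{equation*}
\tilde D := D - \sum_{i=1}^n (D g'_i)\, D'_i,
\end{equation*}
so that $\tilde D g'_j = 0$ for all $j$. If $\tilde D \ne 0$, pick $f \in \lipalg V'.$ and a positive-measure $W' \subset V'$ on which $\tilde D f \ne 0$; testing a relation $\sum_j \alpha_j D'_j + \beta \tilde D = 0$ against the $g'_j$ and then against $f$ forces $\alpha_j = \beta = 0$ on $W'$, so $D'_1, \ldots, D'_n, \tilde D$ would be $n + 1$ linearly independent derivations in $\dermod[W']$, contradicting the local bound $n$. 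Hence $\tilde D = 0$ and $\dermod[V']$ is free of rank $n$.

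To glue local bases into a global basis of $\dermod[X_n]$, rescale each $\{D_j^{(k)}\}_j$ of $\dermod[V^{(n)}_k]$ to have uniformly bounded norm (multiplying by a unit in $\elleinfty V^{(n)}_k.$) and define $D_j^{X_n}|_{V^{(n)}_k} := D_j^{(k)}$; the localization property of Weaver derivations makes this an element of $\dermod[X_n]$, and uniqueness of the expansion on each $V^{(n)}_k$ shows that $\{D_j^{X_n}\}_{j=1}^n$ is a basis. The main obstacle, in my view, is the extraction lemma: once it is in place the exhaustion and the gluing reduce to standard measure-theoretic and algebraic bookkeeping.
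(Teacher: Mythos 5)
Your overall strategy---exhaust $X$ by positive-measure pieces carrying a definite structure and collate by rank---matches the paper's, but you organize it differently and your key lemma is genuinely different. The paper defines the pieces by their \innominato\ and only afterwards proves freeness of the assembled $X_k$, by taking an \emph{arbitrary} maximal linearly independent set and inverting the coefficient $\lambda'$ of a dependence relation locally; your extraction lemma instead proves freeness directly on small pieces via the dual basis $D'_ig'_j=\delta_{i,j}$ from Corollary \ref{orthogonality}, and the step $\tilde D=D-\sum_i(Dg'_i)D'_i$, $\tilde Dg'_j=0$, followed by the $(n+1)$-independent-derivations contradiction on $\{\tilde Df\ne0\}$, is correct and in fact more robust: the coefficients $Dg'_i$ are canonical and bounded by $\vnorm D!\,\sup_j\lipnorm g_j!$ uniformly over the pieces, whereas a maximal linearly independent set need not even be a basis (on $(0,1)$, $\{x\,d/dx\}$ is maximal but does not generate $d/dx$ over ${\rm L}^\infty$), so the paper's choice-independent claim is delicate. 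The price of your route is that you need a countable bounded generating set for the Lipschitz algebra, i.e.\ separability, which the paper's proof of this particular theorem does not use. Two points you should make explicit: cross-rank essential disjointness of the families needs that freeness of rank $n$ passes to positive-measure subsets, which requires the locality principle (Proposition \ref{locality}) together with McShane extension; and for $n=0$ the extraction argument still needs the exhaustion to conclude that the module actually vanishes.

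The one step that is not ``standard bookkeeping'' is the gluing, and as written it has a gap. Set $E_i=\sum_k\chi_{V_k}c_kD'^{(k)}_i$. You rescale by units $c_k$ precisely so that $E_i$ is a bounded operator (hence a derivation), but then the expansion of a given $D\in\dermod[X_n]$ reads $D|_{V_k}=\sum_i c_k^{-1}(Dg'^{(k)}_i)\,(c_kD'^{(k)}_i)$, and the coefficients $c_k^{-1}Dg'^{(k)}_i$ are no longer visibly in $\elleinfty X_n.$: the norms $\vnorm D'^{(k)}_i!$ are controlled only by the lower bound on $|\det B|$ chosen on each piece, which may degenerate along the exhaustion (already on $(0,1)$ the derivation dual to the Lipschitz function $x^2/2$ is $x^{-1}\,d/dx$, of norm $\epsi^{-1}$ on $(\epsi,1)$). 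So either the glued basis is bounded and the coefficients are not, or vice versa; you need a further argument (a refinement of the partition, or a direct proof that the unrescaled glued operator is bounded and sequentially weak* continuous) to close this. To be fair, the paper's own gluing of the $\lambda_{j,U_i}=\lambda_j/\lambda'$, with $\lambda'$ bounded below only piecewise, has exactly the same lacuna, and your dual-basis normalization at least keeps the coefficient side under control before rescaling; but this is the point where the proof needs a real idea rather than bookkeeping.
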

\begin{proof}
  Without loss of generality we assume that $\meas(X)<\infty$. Let
  $\rankcoll_N(X)$ denote the collection of subsets $U\subset X$
  satisfying the following properties:
  \begin{itemize}
  \item $U$ is measurable and $\meas(U)>0$;
    \item $\dermod$ has \innominato\ $N$;
  \end{itemize}
if $\rankcoll_N(X)=\emptyset$ we let $X_N=\emptyset$. If
$\rankcoll_N(X)\ne\emptyset$, let $U_{N,1}\in\rankcoll_N(X)$ be such
that
$$
   \meas(U_{N,1})>\frac{2}{3}\sup_{V\in\rankcoll_N(X)}\meas(V).
$$
If $$
\rankcoll_N(X\setminus U_{N,1})=\emptyset,
$$ we stop; otherwise we select $U_{N,2}\in\rankcoll_N(X\setminus
U_{N,1})$
with
$$
   \meas(U_{N,2})>\frac{2}{3}\sup_{V\in\rankcoll_N(X\setminus
     U_{N,1})}\meas(V).
$$ The construction of the sets $\left\{U_{N,i}\right\}$ proceeds by
induction. There are two cases: either we stop after $N'$ steps or we
continue up to infinity. In the first case we let
$$
X_N=U_{N,1}\sqcup\cdots\sqcup U_{N,N'}
$$ and observe that 
$$
\rankcoll_N(X\setminus X_N)=\emptyset.
$$
In the second case, as the sets $\left\{U_{N,i}\right\}$ are disjoint
and as $\meas(X)<\infty$, we conclude that 
$$
\lim_{i\to\infty}\meas(U_{N,i})=0.
$$ We now observe that
\begin{equation}
  \begin{split}
\sup_{V\in\rankcoll_N\left(X\setminus\bigsqcup_{i=1}^\infty
    U_{N,i}\right)}
\meas(V)&\le\sup_{V\in\rankcoll_N\left(X\setminus\bigsqcup_{i=1}^k
    U_{N,i}\right)}\meas(V)\\
&\le\frac{3}{2}\meas(U_{N,k+1})\to0
  \end{split}
\end{equation}
as $k\nearrow\infty$; this shows that
$$
\rankcoll_N(X\setminus\cup_i U_{N,i})=\emptyset.
$$ 
The conclusion is that if we let $X_N=\sqcup_{i=1}^\infty U_{N,i}$
then there is no measurable subset $V\subset X\setminus X_N$ such that 
$\mu(V)>0$ and $\dermod[V]$ has \innominato\ $N$. The sets
$X_{N-1},X_{N-2},\cdots,X_0$ are constructed by induction. Here we use
the hypothesis that the \innominato\ is locally bounded by $N$. For example,
this implies that $X\setminus X_N$ has \innominato\ locally bounded by $N-1$.
The induction step proceeds as follows. Suppose we have already
constructed $X_N,\cdots,X_{N-k}$ and $N-k>0$. If we let
$$
Y=X\setminus \left(X_{N-k}\sqcup\cdots\sqcup X_N\right),
$$ then $Y$ has \innominato\ locally bounded by $N-k-1$. Let
  $\rankcoll_{N-k-1}(Y)$ denote the collection of subsets $U\subset Y$
  satisfying the following properties:
  \begin{itemize}
  \item $U$ is measurable and $\meas(U)>0$;
    \item $\dermod$ has \innominato\ $N-k-1$;
  \end{itemize}
then we apply the same argument used to construct $X_N$. In
particular, there is no measurable subset $V\subset Y\setminus
X_{N-k-1}$ such that $\meas(V)>0$ and $\dermod[V]$ has \innominato\ $N-k-1$.
Before proceeding further, we remark that $X_0$ might be nonempty.
We now show that 
$$
\meas\left(X\setminus (X_0\sqcup\cdots\sqcup X_N)\right)=0
$$ arguing by contradiction. If
$$
\meas\left(X\setminus(X_0\sqcup\cdots\sqcup X_N)\right)>0,
$$ there would be some measurable
$$
V \subset X\setminus X_0\sqcup\cdots\sqcup X_N
$$ with $\meas(V)>0$ and $\dermod[V]$ having \innominato\  in
$\left\{0,\cdots,N\right\}$.  This contradicts the construction
of the $X_i$'s. Note that in this step we have again used that the
\innominato\  is locally bounded by $N$. 
\par We now prove that $\dermod[X_k]$ is free over $\elleinfty X_k.$
. We choose  a
maximal linearly independent set
$$\left\{D_1,\cdots,D_k\right\}\subset\dermod[X_k]$$ and we show it is a basis. As the
elements of this set
are linearly independent, it suffices to show that it spans
$\dermod[X_k]$ over $\elleinfty X_k.$. Let $D'\in\dermod[X_k]$ and
define the collection $\rankder(X_k)$ of subsets of $X_k$ in the following
way:
$V\in\rankder(X_k)$ if and only if the following holds:
\begin{itemize}
\item $V$ is measurable and $\meas(V)>0$;
\item there are
  $\left\{\lambda_{1,V},\cdots,\lambda_{k,V}\right\}\subset\elleinfty
  X_k.$
such that:
\begin{equation}\label{ducentoventifa}
\chi_V\cdot\left(D'-\sum_{j=1}^k\lambda_{j,V}D_j\right)=0.
\end{equation}
\end{itemize}
We first show that $\rankder(X_k)$ is not empty. By the maximality of
$\left\{D_1,\cdots,D_k\right\}$ there are
$\left\{\lambda',\lambda_1,\cdots,\lambda_k\right\}\subset\elleinfty X_k.$
such that
\begin{align}
  \meas\left(x:
 \lambda'(x)=0\right)&=0\\
\lambda' D' - \sum_{j=1}^k\lambda_jD_j&=0;
\end{align}
Let $V$ be a measurable set with $\mu(V)>0$ and $\lambda'|_V\in   {\mathcal
  V}_M^\infty( V,\meas)$ for some $M>0$, implying that
  $\lambda'|_V$ is invertible in $\elleinfty
V.$. Without loss of generality we can assume that $|\lambda'(x)|\ge M$
for $x\in V$. If we define
$$
\lambda_{j,V}=
\begin{cases}
  \frac{\lambda_j(x)}{\lambda'(x)}&\text{for $x\in V$}\\
  0&\text{for $x\in{}^cV$}\\
\end{cases}
$$ then \eqref{ducentoventifa} holds implying that
$V\in\rankder(X_k)$.
The same argument used for the set $\rankcoll_N(X)$ shows that there
is a measurable partition
% Let $U_1\in\rankder(X_k)$ be such that 
% $$
%    \meas(U_1)>\frac{2}{3}\sup_{V\in\rankder(X_k)}\meas(V).
% $$ If $\meas(X_k\setminus U_1)=0$ we stop, otherwise, as
% $\mu(X_k\setminus U_1)>0$ we conclude that $\rankder(X_k\setminus
% U_1)\ne\emptyset$ (here we use again that for any subset $V\subset
% X_k$ of positive measure, $\dermod[V]$ has \innominato\  $k$). Let $U_2\subset
% X_k\setminus U_1$ be such that
% $$
%    \meas(U_2)>\frac{2}{3}\sup_{V\in\rankder(X_k\setminus U_1)}\meas(V).
% $$ The construction of the sets $\left\{U_i\right\}$ proceeds by
% induction. Either we stop after finitely many steps, for example after
% $N'$ steps or we continue up to $\infty$. In the first case
% $$
% \meas\left(X_k\setminus U_1\cup\cdots\cup U_{N'}\right)=0;
% $$ in the second case as the sets $\left\{U_i\right\}$ are disjoint
% and $\meas(X_k)<\infty$, we conclude that
% $$
% \lim_{i\to\infty}\meas(U_i)=0.
% $$ We now observe that
% \begin{equation}
%   \begin{split}
% \sup_{V\in\rankder\left(X_k\setminus\bigcup_{i=1}^\infty
%     U_i\right)}
% \meas(V)&\le\sup_{V\in\rankder\left(X_k\setminus\bigcup_{i=1}^k
%     U_i\right)}\meas(V)\\
% &\le\frac{3}{2}\meas(U_{k+1})\to0
%   \end{split}
% \end{equation}
% as $k\nearrow\infty$; this shows that
$$
\bigsqcup_{i\in I}
    U_i 
$$ of $X_k$ such that 
% of $X_k$ with
% $$
% \sum_i\chi_{U_i}=1\quad\text{in $\elleinfty X_k.$}.
% $$
%Now 
for each $U_i$ there are $\left\{\lambda_{1,U_i},\cdots
\lambda_{k,U_i}\right\}$ with
\begin{equation}\label{ducentoventibo}
\chi_{U_i}\cdot\left(D'-\sum_{j=1}^k\lambda_{j,U_i}D_j\right)=0;
\end{equation} if we let
$$
\lambda_j=\sum_{i\in I}\chi_{U_i}\lambda_{j,U_i}
$$ and sum the equations \eqref{ducentoventibo}
we conclude that
$$
D'=\sum_{j=1}^k\lambda_jD_j.
$$
\end{proof}
\section{The local Lipschitz constants}\label{sec_biglip}
 In this section we recall the definition of the local Lipschitz
constants $\biglip f$ and $\smllip f$ for a function $f$. Using Egorov
and Lusin Theorems,
we obtain a measurable decomposition where the local Lipschitz
constants behave nicely. In a doubling metric measure space, using
the Lebesgue Differentiation Theorem and the locality principle for
derivations, we obtain the local estimate \eqref{rev-der-ineq} which
we call the {\bf localized derivation inequality}.
\begin{defn}\label{biglip-defn}[Variation and local Lipschitz constants]
Let $f$ be a Lipschitz function. Let us define the {\bf variation} of $f$ on
the ball $B(x,r)$ by
\begin{equation}
  \varlip f(x,r)=\frac{1}{r}\sup_{y\in B(x,r)}|f(x)-f(y)|.
\end{equation}
We define the {\bf lower and upper variations of $f$ 
at $x$ from scale $r$ down to $0$} by
\begin{align}
 \biglip f(x,r)&=\sup_{s\le r}\varlip f(x,s)\\
 \smllip f(x,r)&=\inf_{s\le r}\varlip f(x,s).
\end{align}
Let us define {\bf the infinitesimal Lipschitz constants of $f$ at
$x$} by
\begin{align}
  \biglip f(x)&=\inf_{r\ge0}\biglip f(x,r)\\
  \smllip f(x)&=\sup_{r\ge0}\smllip f(x,r).
\end{align}
\end{defn}
As far a we understand, the behaviour of $\smllip f$ is not so nice in
general. Also, this is not really a local Lipschitz constant. The
behaviour of $\biglip f$ is more regular. For example, if we blow up
$f$ near some point, then $\biglip f$ really gets closer and closer to
the Lipschitz constant of the blow up. We also note that $\biglip f$
behaves like a seminorm in $f$ in the following sense, if $f,g$ are
Lipschitz functions and $\lambda,\mu\in \mathbb{R}$, then
\begin{equation}
  \biglip (\lambda f + \mu g) \le |\lambda|\biglip f + |\mu|\biglip g.
\end{equation} For the following Lemma compare \cite{kleiner_mackay}:
\begin{lem}\label{biglip-meas}
  Let $f$ be a Lipschitz function. Then there is a measurable partition
  \begin{equation}
    X=\bigsqcup_{i=1}^\infty A_i \sqcup \Omega,
  \end{equation} such that
  \begin{itemize}
   \item $\Omega$ has measure $0$
   \item $\biglip f$ and $\smllip f$ are continuous on each $A_i$
\item $\biglip f(\cdot,r)\searrow\biglip f$ and $\smllip
  f(\cdot,r)\nearrow\smllip f$ uniformly on each $A_i$ for $r\searrow0$.
\end{itemize}
\end{lem}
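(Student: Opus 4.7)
The overall plan is to verify Borel measurability of the relevant auxiliary functions and then apply Lusin and Egorov in tandem, iterating to exhaust $X$ up to a null set. Without loss of generality I would assume $\mu(X)<\infty$ (otherwise first partition $X$ into sets of finite measure and argue on each piece).

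The first step is to show that for each fixed $r>0$ the functions $x\mapsto \biglip f(x,r)$ and $x\mapsto \smllip f(x,r)$ are Borel, from which the pointwise monotone limits $\biglip f$ and $\smllip f$ are Borel as well. Since $f$ is continuous and the balls $B(x,s)$ vary monotonically in $s$, for fixed $s>0$ the function $x\mapsto \varlip f(x,s)$ is lower semicontinuous. Using the monotonicity of $s\mapsto \varlip f(x,s)$ in the relevant senses, $\biglip f(\cdot,r)$ (resp.\ $\smllip f(\cdot,r)$) can be represented as a countable sup (resp.\ inf) over rational scales $s\in(0,r]\cap\mathbb{Q}$, and hence is Borel; the limits as $r\searrow 0$ are then also Borel.

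The second step is to apply Lusin's theorem to the pair $(\biglip f,\smllip f)$ to obtain, for any prescribed $\epsi>0$, a closed set $K\subset X$ with $\mu(X\setminus K)<\epsi$ on which both functions are continuous. Choosing a sequence $r_n\searrow 0$, the monotone pointwise convergences $\biglip f(\cdot,r_n)\searrow \biglip f$ and $\smllip f(\cdot,r_n)\nearrow \smllip f$ on $K$ are upgraded via Egorov's theorem to uniform convergences on a further measurable subset $A\subset K$ with $\mu(K\setminus A)<\epsi$. Because $r\mapsto \biglip f(x,r)$ and $r\mapsto \smllip f(x,r)$ are monotone in $r$, uniform convergence along the sequence $r_n$ automatically yields uniform convergence as $r\searrow 0$.

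The third step is a routine exhaustion: iterate the second step on the remaining set $X\setminus A$, at each stage extracting a measurable subset $A_i$ capturing at least, say, half of the remaining measure and satisfying the continuity and uniform convergence properties, and let $\Omega$ be the intersection of the remainders, which is null. The main technical obstacle is the measurability step: verifying that $x\mapsto \biglip f(x,r)$ is Borel despite the sup over the balls $B(x,s)$ requires the density reduction to a countable collection of scales together with the semicontinuity observation. Once measurability is established, the combination of Lusin and Egorov with monotonicity is entirely standard.
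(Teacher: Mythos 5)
Your proposal is correct and follows essentially the same route as the paper: lower semicontinuity of $x\mapsto\varlip f(x,s)$, Borel measurability of the variations and their limits, Lusin then Egorov along a sequence $r_n\searrow0$, the monotonicity-in-$r$ upgrade to uniform convergence, and a measure-exhaustion to build the partition. Your reduction of the sup/inf over scales to rational $s$ is a slightly more careful justification of the measurability step (the paper only needs it for $\smllip f(\cdot,r)$, since a sup of lower semicontinuous functions is automatically lower semicontinuous), but the argument is otherwise identical.
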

\begin{proof}
  Without loss of generality we can assume $\mu(X)<\infty$. The proof
  uses Egorov and Lusin theorems. It is therefore necessary to
  establish that $\biglip f$ and $\smllip f$ are (Borel)
  measurable. We first observe that, for fixed $s$, $\varlip f(x,s)$
  is lower semicontinuous. Indeed, if $\varlip f(x,s)>C$, there is a
  point $y_s\in\ball x,s.$ such that
$$
\left|f(x)-f(y_s)\right|>C;
$$ then for 
$$
\dist x,x'.<s-\dist x,y_s.
$$  it follows that $y_s\in\ball x',s.$ which implies $\varlip f(x',s)>C$. In
particular, $\varlip f(x,s)$ is Borel (in $x$). This implies that
$\biglip f(x,s)$ and $\smllip f(x,s)$ are Borel (in $x$). As
$\biglip f(x,r)\searrow \biglip f(x)$ and $\smllip f(x,r)\nearrow
\smllip f(x)$, the functions $\biglip f$ and $\smllip f$ are Borel too. By Lusin
theorem, for each $\epsi>0$ there is a subset $B_\epsi\subset X$ such
that
$$
\mu\left(X\setminus B_\epsi\right)<\epsi
$$ and the functions $\biglip f$, $\smllip f$ are continuous on $B_\epsi$. Using an
exhaustion argument we get a measurable partition
$$
X=\bigsqcup_{i=1}^\infty B_i\sqcup \Omega_1
$$ such that $\mu(\Omega_1)=0$ and the functios $\biglip f$, $\smllip f$ are
continuous on each $B_i$. We now consider an arbitrary
 sequence $r_n\searrow0$. By
Egorov theorem, for each $\epsi>0$ and $i$ there is a $C_\epsi\subset
B_i$
such that
$$
\mu\left(B_i\setminus C_\epsi\right)<\epsi
$$ and the sequences $\biglip f(\cdot,r_n)$, $\smllip f(\cdot, r_n)$ converge
uniformly on $C_\epsi$. Using an exhaustion argument we find a
measurable partition
$$
B_i=\bigsqcup_{j=1}^\infty C_{i,j}\sqcup \Omega_{2,i}
$$ such that $\mu(\Omega_{2,i})=0$, and the sequences $\biglip f(\cdot,r_n)$, $\smllip f(\cdot, r_n)$ converge
uniformly on $C_{i,j}$. As $\biglip f(\cdot,r)$ is nonincreasing in
$r$ and $\smllip f(\cdot,r)$ is nondecreasing in $r$, this implies
that $\biglip f(\cdot,r)\searrow \biglip f(\cdot)$ and 
$\smllip f(\cdot,r)\nearrow \smllip f(\cdot)$ uniformly on each
$C_{i,j}$.
The proof is completed by letting
$$
\Omega=\Omega_1\cup\bigcup_{i=1}^\infty\Omega_{2,i}.
$$
\end{proof}
The following discussion is not actually needed to prove Theorem
\ref{rev-der-ineq}, which is the main result of this section. However,
it clarifies the point we made when we said that $\biglip f(x)$
is essentially the Lipschitz constant of $f$ in a neighbourhood of
$x$. Note that we assume that the measure $\meas$ is doubling; 
however, the proof just requires the Lebesgue Differentiation Theorem.
\begin{defn}[local density]
  Let $(X,\metric)$ be a metric space, $A\subset X$ and $x\in X$. We say that $A$ is
  {\bf locally dense at $x$} if for any $\varepsilon>0$ there is an $r(\varepsilon)
>0$ such that if $r\le r(\varepsilon)$, $A\cap \ball x,(1+\epsi)r.$ is $\varepsilon r$-dense in $\ball x,r.$.
\end{defn}

\begin{prop} \label{local-density}
  Let $(X,\metric,\meas)$ be a doubling metric measure space and
$A\subset X$ a measurable subset. Then for a.e.~$x\in A$, $A$ is locally
 dense at $x$.
\end{prop}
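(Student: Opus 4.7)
The plan is to deduce the statement directly from the Lebesgue Differentiation Theorem, which is available on doubling metric measure spaces. Let $N$ be the exceptional set of points where LDT applied to $\chi_A$ fails; then $\mu(N)=0$, and for every $x\in A\setminus N$ one has
\begin{equation*}
\lim_{r\searrow 0}\frac{\mu(B(x,r)\setminus A)}{\mu(B(x,r))}=0.
\end{equation*}
I would show that every $x\in A\setminus N$ is a point at which $A$ is locally dense, which clearly suffices.

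First I would unpack the conclusion. Saying that $A\cap B(x,(1+\varepsilon)r)$ is $\varepsilon r$-dense in $B(x,r)$ is equivalent to asking that $B(y,\varepsilon r)\cap A\neq\emptyset$ for every $y\in B(x,r)$, because any witness $a\in B(y,\varepsilon r)\cap A$ automatically satisfies $\rho(x,a)<(1+\varepsilon)r$. Thus, the failure of local density at $x$ produces some $\varepsilon>0$, a sequence $r_n\searrow 0$, and points $y_n\in B(x,r_n)$ with $B(y_n,\varepsilon r_n)\cap A=\emptyset$.

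Next I would exploit the doubling property to turn each such ``hole'' into a definite amount of measure missing from $B(x,(1+\varepsilon)r_n)$. Since $y_n\in B(x,r_n)$, we have $B(x,(1+\varepsilon)r_n)\subset B(y_n,(2+\varepsilon)r_n)$, and by iterating the doubling inequality $\lceil\log_2((2+\varepsilon)/\varepsilon)\rceil$ times one obtains a constant $C_\varepsilon>0$, depending only on $\varepsilon$ and the doubling constant, such that
\begin{equation*}
\mu\bigl(B(y_n,(2+\varepsilon)r_n)\bigr)\leq C_\varepsilon\,\mu\bigl(B(y_n,\varepsilon r_n)\bigr).
\end{equation*}
Since $B(y_n,\varepsilon r_n)\subset B(x,(1+\varepsilon)r_n)\setminus A$, combining the two inequalities yields
\begin{equation*}
\mu\bigl(B(x,(1+\varepsilon)r_n)\setminus A\bigr)\;\geq\;C_\varepsilon^{-1}\,\mu\bigl(B(x,(1+\varepsilon)r_n)\bigr),
\end{equation*}
contradicting the LDT conclusion at $x$ (applied along the radii $(1+\varepsilon)r_n\searrow 0$). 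Hence no such $x\in A\setminus N$ exists, and $A$ is locally dense at almost every point of $A$.

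The only nontrivial point in the argument is ensuring that the measure comparison constant $C_\varepsilon$ is genuinely uniform in $n$ and in the location of $y_n$; this is the main (and only modest) obstacle, and it is handled by the standard iteration of the doubling axiom, which depends only on the ratio of the two radii. Everything else is a direct assembly of LDT, elementary ball inclusions, and the negation of the quantifier in the definition of local density.
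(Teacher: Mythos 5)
Your argument is correct and is essentially the paper's proof: both negate local density to produce a ball $B(y,\varepsilon r)\subset B(x,(1+\varepsilon)r)$ missed by $A$, use doubling to give that hole a definite fraction of the measure of $B(x,(1+\varepsilon)r)$ (the paper via the power-law comparison for nested balls, you via iterating the doubling axiom around $y$ — the same estimate), and then contradict the Lebesgue density of $A$ at $x$. No gaps.
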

\begin{proof}The case $\mu(A)=0$ is trivial so we assume that $A$ has
  positive measure.
  As $\mu$ is doubling, there are constants $C\ge1$ and $\kappa>0$
  such that if $z,w\in X$ and $\ball w,s.\subset \ball x,r.$, we have
  \begin{equation}\label{trecentotririci}
    \frac{\mu(\ball w,s.)}{\mu(\ball x,r.)}\ge\frac{1}{C}\left(
      \frac{s}{r}\right)^\kappa;
  \end{equation}
let $x\in A$ and suppose that $A\cap \ball x,(1+\epsi)r.$ is not
$\epsi r$-dense in $\ball x,r$. In this case, there is a point
$y\in\ball x,r.$ such that $\ball y,\epsi r.$ is disjoint from $$A\cap
\ball x,(1+\epsi)r..$$ As $\ball y,\epsi r.\subset\ball x,(1+\epsi)r.$,
\eqref{trecentotririci} implies that 
\begin{equation}
  \label{eq:trequatto}
  \frac{\mu\left(\ball x,(1+\epsi)r.\setminus A\right)}{\mu\left(
    \ball x,(1+\epsi)r.\right)}\ge\frac{\mu\left(\ball y,\epsi r.\right)}{\mu\left(
    \ball x,(1+\epsi)r.\right)}\ge\frac{1}{C}\left(
      \frac{\epsi}{1+\epsi}\right)^\kappa;
\end{equation}
as the Lebesgue differentiation theorem holds in the metric measure 
space $(X,\metric,\meas)$, for a.e.~$x\in A$, $x$ is a density point
of $A$, that is,
$$
\lim_{s\searrow0}\frac{\mu(\ball x,s.\setminus A)}{\mu(\ball x,s.)}=0.
$$ If we choose $s_\epsi$ so that $r\le s_\epsi$ implies
$$
\frac{\mu(\ball x,(1+\epsi)r.\setminus A)}{\mu(\ball x,(1+\epsi)r.)}<\frac{1}{C}\left(\frac{\epsi}{1+\epsi}\right)^\kappa,
$$
then \eqref{eq:trequatto} does not hold, impliying that $A\cap \ball x,(1+\epsi)r.$ is 
$\epsi r$-dense in $\ball x,r$.
\end{proof}
\begin{cor}
  Suppose $x_0\in A$ is a density point of $A$. Suppose that $f,g$ are
  Lipschitz function with $f=g$ on $A$. Then
  \begin{align}
    \biglip f(x_0)&=\biglip g(x_0)\\
    \smllip f(x_0)&=\smllip g(x_0).\\
  \end{align}
\end{cor}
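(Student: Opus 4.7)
The plan is to exploit the fact that a density point is automatically a point of local density in the sense of Definition above, so that arbitrary points near $x_0$ can be approximated by points of $A$ where $f$ and $g$ coincide; the Lipschitz continuity of $f$ and $g$ then controls the error.

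First, by re-running the argument of Proposition \ref{local-density} (which in fact shows that every density point is a point of local density), I obtain that for every $\varepsilon>0$ there exists $r(\varepsilon)>0$ such that for all $r\le r(\varepsilon)$ and every $y\in\ball x_0,r.$ one can find $y'\in A\cap\ball x_0,(1+\varepsilon)r.$ with $\dist y,y'.\le \varepsilon r$. Since $x_0\in A$, the hypothesis $f|_A=g|_A$ gives $f(x_0)=g(x_0)$ and $f(y')=g(y')$, whence
\begin{equation*}
|f(x_0)-f(y)|\le |f(x_0)-f(y')|+\glip f.\cdot\dist y,y'.=|g(x_0)-g(y')|+\glip f.\cdot\dist y,y'..
\end{equation*}
Using $y'\in\ball x_0,(1+\varepsilon)r.$ to bound $|g(x_0)-g(y')|\le(1+\varepsilon)r\cdot\varlip g(x_0,(1+\varepsilon)r)$, dividing by $r$, and taking the supremum over $y\in\ball x_0,r.$, I obtain the key comparison
\begin{equation*}
\varlip f(x_0,r)\le(1+\varepsilon)\,\varlip g(x_0,(1+\varepsilon)r)+\varepsilon\,\glip f.
\end{equation*}
valid for all $r\le r(\varepsilon)$.

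From this comparison the two equalities drop out by taking appropriate limits in $r$. Taking the supremum over $s\le r$ on both sides gives $\biglip f(x_0,r)\le(1+\varepsilon)\biglip g(x_0,(1+\varepsilon)r)+\varepsilon\glip f.$; letting $r\searrow 0$ yields $\biglip f(x_0)\le(1+\varepsilon)\biglip g(x_0)+\varepsilon\glip f.$, and then $\varepsilon\to 0$ gives $\biglip f(x_0)\le \biglip g(x_0)$. For the lower variations, since $\liminf_{s\to 0}\varlip g(x_0,(1+\varepsilon)s)=\smllip g(x_0)$, passing to $\liminf_{s\to 0}$ in the pointwise estimate produces $\smllip f(x_0)\le(1+\varepsilon)\smllip g(x_0)+\varepsilon\glip f.$, and again $\varepsilon\to 0$ gives $\smllip f(x_0)\le\smllip g(x_0)$. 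Swapping the roles of $f$ and $g$ (whose Lipschitz constants are both finite and play symmetric roles) yields the reverse inequalities, completing the proof.

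The only mildly delicate point is verifying that a density point really is a point of local density, which is precisely the content of the proof of Proposition \ref{local-density}; once that is in hand the argument is a routine $\varepsilon$-juggle, with the Lipschitz bound $\glip f.\cdot\dist y,y'.\le\glip f.\varepsilon r$ absorbing the perturbation error that arises from replacing $y$ by the nearby point $y'\in A$.
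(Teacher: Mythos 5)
Your proof is correct, and it is exactly the argument the paper intends: the corollary is stated without proof immediately after Proposition \ref{local-density}, whose proof (as you note) shows that every density point of $A$ is a point of local density, and the rest is the routine approximation of $y\in\ball x_0,r.$ by a nearby $y'\in A$ with the error absorbed by $\glip f.$ and the factor $(1+\varepsilon)$. The only standing hypothesis to keep explicit is that $\meas$ is doubling (or at least that the Lebesgue differentiation argument of Proposition \ref{local-density} applies), which is inherited from the surrounding discussion.
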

 We now come back to the proof of Theorem \ref{rever-der-ineq}. We
need a result about the local behaviour of derivations (see
 the Appendix). References are \cite[Lemma
13.4]{heinonen07}, \cite[Lemma 7.2.3]{weaver_book99} and \cite[Lemma
27]{weaver00}. Note that Proposition \ref{locality} does not require
the measure to be doubling or any particular regularity. In our view,
it is essentially a consequence of the functional-analytic properties
of derivations.
\begin{prop}\label{locality}
  Let $A\subset X$ be a measurable set and $D\in\dermodsp$ a derivation. If the Lipschitz
  functions $f,g$ agree on $A$, then $Df(x)=Dg(x)$ for a.e.~$x\in A$.
\end{prop}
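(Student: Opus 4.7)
The plan is to reduce, by linearity of $D$, to showing that if a Lipschitz function $h=f-g$ vanishes on $A$, then $Dh=0$ almost everywhere on $A$. I would proceed in three steps.

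\textbf{Step 1: Locality on open sets.} First I would establish that if $h$ vanishes on an open set $U$, then $Dh=0$ a.e.\ on $U$. For each $x\in U$ pick $r>0$ with $\ball x,2r.\subset U$ and let $\phi\in\lipalg X.$ be a Lipschitz cutoff equal to $1$ on $\ball x,r.$ and vanishing outside $\ball x,2r.$. Then $h\phi\equiv 0$, so $D(h\phi)=0$, and the Leibniz rule yields $\phi\,Dh+h\,D\phi=0$ a.e.; on $\ball x,r.$ both $\phi=1$ and $h=0$, hence $Dh=0$ a.e.\ on $\ball x,r.$. Covering $U$ by countably many such balls (via separability of $X$) gives $Dh=0$ a.e.\ on $U$.

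\textbf{Step 2: Truncation.} Writing $h=h^+-h^-$, and using linearity again, I may assume $h\geq 0$. Set $h_\epsi=(h-\epsi)_+\in\lipalg X.$; this satisfies $\glip h_\epsi.\le\glip h.$ and $\|h_\epsi\|_\infty\le\|h\|_\infty$. Moreover $h_\epsi$ vanishes on the open set $\{h<\epsi\}$, which contains $A$ because $h|_A=0$. Step 1 then yields $Dh_\epsi=0$ a.e.\ on $\{h<\epsi\}$, and in particular a.e.\ on $A$.

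\textbf{Step 3: Weak* passage to the limit.} Since $\|h_\epsi-h\|_\infty\le\epsi\to 0$ and $\glip h_\epsi.$ remains uniformly bounded, $h_\epsi\to h$ in the weak* topology of $\lipalg X.$. The weak* continuity property of $D$ forces $Dh_\epsi\to Dh$ weak* in $\elleinfty X.$. Testing against $\chi_E$ for any measurable $E\subset A$ of finite measure yields
\begin{equation*}
\int_E Dh\,d\meas = \lim_{\epsi\to 0}\int_E Dh_\epsi\,d\meas = 0,
\end{equation*}
so $Dh=0$ a.e.\ on $A$, completing the proof.

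The main subtlety lies in Step 1: the Leibniz-plus-cutoff argument itself is short, but it requires the open set $U$ to be exhausted by countably many good balls, which relies on the implicit separability of $X$. Once open-set locality is in hand, the approximation $h_\epsi=(h-\epsi)_+$ together with the weak* continuity of $D$ combine cleanly to extend the principle from open sets to arbitrary measurable subsets; no regularity of $\meas$ (doubling, Poincar\'e, etc.) is needed.
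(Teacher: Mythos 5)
Your argument is correct, and it reaches the conclusion by a genuinely different route from the one in the paper's Appendix. The paper reduces to showing $Dh=0$ a.e.\ on the full zero set $B=\{h=0\}$ and does so by a factorization trick: it builds regularized square roots $h_n$ (equal to $\sgn h\cdot\sqrt{|h|}$ away from $0$ and to $\sqrt n\,h$ near $0$), all vanishing on $B$, notes that $|h_n|\cdot h_n\to h$ weak*, and applies Leibniz to the product $|h_n|\cdot h_n$ so that the integrand $D|h_n|\cdot h_n+Dh_n\cdot|h_n|$ vanishes identically on $B$ before passing to the limit. You instead prove locality on open sets first (cutoff plus Leibniz plus a Lindel\"of covering), then approximate $A$ from outside by the open sets $\{h<\epsi\}$ via the truncations $(h-\epsi)_+$, and pass to the weak* limit. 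Both proofs use only the Leibniz rule and sequential weak* continuity, and neither needs any doubling hypothesis, consistent with the paper's remark that this proposition is purely functional-analytic. Your version is more modular (open-set locality is a reusable statement, and the truncation step is transparent), at the cost of invoking separability twice: once for the Lindel\"of covering in Step 1, and implicitly again since weak* sequential continuity is what the definition of derivation provides. The paper's factorization avoids the covering argument and the reduction to $h\ge 0$ entirely, handling an arbitrary measurable zero set in one stroke. Two small points to make explicit in your write-up: in Step 3 you should fix a sequence $\epsi_n\searrow 0$, since the definition of derivation only guarantees continuity along weak* convergent \emph{sequences}; and the final "testing against $\chi_E$" step needs $A$ to be exhausted by finite-measure subsets, which is the same $\sigma$-finiteness the paper assumes when it tests against $\chi_C$ for $C\subset B$ of finite measure.
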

We can now prove the main result of this section. Note that 
 we assume that the measure $\meas$ is  doubling,
but the proof
just requires the Lebesgue Differentiation Theorem.
\begin{thm}\label{rever-der-ineq}
  Let $D\in\dermodsp$ and $f\in\lipalg X.$. Assume that the measure
  $\mu$ is doubling. Then there is a measurable set $\Omega_f$ such
  that
  \begin{itemize}
  \item $\mu(\Omega_f)=0$;
    \item if $x\in{}^c\Omega_f$,
        \begin{equation}\label{rev-der-ineq}
    |Df(x)|\le \vnorm D! \biglip f(x).
  \end{equation}
  \end{itemize}
  We will refer to this as the {\bf localized derivation inequality}.
\end{thm}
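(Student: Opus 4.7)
The plan is to reduce, via Lemma \ref{biglip-meas}, to working on a single piece $A = A_i$ of the partition constructed there, on which $\biglip f$ is continuous and $\biglip f(\cdot,r) \searrow \biglip f$ uniformly as $r \searrow 0$; then to replace $f$ on small neighborhoods in $A$ by globally defined Lipschitz surrogates whose Banach-algebra norm is close to $\biglip f(x_0)$; and finally to combine the locality principle (Proposition \ref{locality}) with the operator bound $\supnorm{Dh}! \le \vnorm D! \lipnorm h!$ for $h \in \lipalg X.$. The doubling hypothesis enters only to ensure that each $A_i$ is separable, so that countable subcovers are available.

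Fix $\epsi > 0$ and a piece $A = A_i$. For each $x_0 \in A$, Lemma \ref{biglip-meas} furnishes $r = r(x_0) > 0$ so small that $|\biglip f(y) - \biglip f(x_0)| < \epsi$ on $A \cap \ball x_0,r.$ and $\biglip f(y,r) < \biglip f(y) + \epsi$ for every $y \in A$. Unwinding the definition of $\varlip f$ then yields, for any $y \in A \cap \ball x_0,r/2.$ and any $z \in \ball y,r/2.$,
\begin{equation*}
|f(y) - f(z)| \le \dist y,z. \, \biglip f(y,r) \le \dist y,z. \, (\biglip f(x_0) + 2\epsi),
\end{equation*}
so $f - f(x_0)$ is $(\biglip f(x_0) + 2\epsi)$-Lipschitz on $A \cap \ball x_0,r/2.$. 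Extend this restriction to $X$ by McShane's formula and truncate the extension to $[-\epsi,\epsi]$, producing a function $\tilde f_{x_0}$ on $X$ with $\glip{\tilde f_{x_0}}. \le \biglip f(x_0) + 2\epsi$ and $\supnorm{\tilde f_{x_0}}! \le \epsi$, hence $\lipnorm{\tilde f_{x_0}}! \le \biglip f(x_0) + 2\epsi$; choosing further $r'(x_0) \le \epsi / (\biglip f(x_0) + 2\epsi)$ keeps $|f(y) - f(x_0)| \le \epsi$ throughout $A \cap \ball x_0,r'(x_0).$, so the truncation is inactive there and $\tilde f_{x_0} = f - f(x_0)$ on that set.

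Since $D$ annihilates constants by the Leibniz rule, Proposition \ref{locality} applied to the pair $f - f(x_0)$, $\tilde f_{x_0}$ yields $Df(x) = D\tilde f_{x_0}(x)$ for a.e.\ $x \in A \cap \ball x_0,r'(x_0).$; combined with the operator bound and $\biglip f(x_0) \le \biglip f(x) + \epsi$ on this set, one obtains $|Df(x)| \le \vnorm D! (\biglip f(x) + 3\epsi)$ a.e.\ on $A \cap \ball x_0,r'(x_0).$. Separability of $A$ lets one extract a countable subcover by such balls, so the estimate holds a.e.\ on $A$, and summing over $i$ a.e.\ on $X$. Intersecting the exceptional null sets along a sequence $\epsi_n \searrow 0$ produces the desired $\Omega_f$. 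The main obstacle is that $\vnorm D!$ pairs with $\lipnorm \cdot!$, which includes the sup-norm and not just the Lipschitz constant, forcing the two-stage replacement of $f$: a McShane extension to control the Lipschitz constant, followed by a truncation to control the sup norm, tuned via $r'(x_0)$ so that $\tilde f_{x_0}$ still agrees with $f - f(x_0)$ on a set of positive measure where the locality principle can be applied.
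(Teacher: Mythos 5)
Your proof is correct, and it reaches the conclusion by a genuinely different final step than the paper's. Both arguments share the same skeleton: the Egorov--Lusin decomposition of Lemma \ref{biglip-meas}, the observation that on each piece $A_i$ the restriction of $f-f(x_0)$ to $A_i\cap B(x_0,r)$ has Lipschitz constant at most $\biglip f(x_0)+2\epsi$ for $r$ small, a McShane extension, and the locality principle of Proposition \ref{locality}. They diverge in how the pointwise bound on $Df$ is extracted. The paper fixes a point $x$ which is simultaneously a Lebesgue point of $Df$ and a density point of $A_i$, averages $Df$ over $B(x,r)$, splits the integral into the part over $A_i\cap B(x,r)$ (controlled via locality by the extension) and the part over $B(x,r)\setminus A_i$ (controlled by the vanishing density ratio), and lets $r\searrow0$; this is exactly where the Lebesgue Differentiation Theorem, hence the doubling hypothesis, enters. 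You instead apply the operator bound $\|Dh\|_\infty\le\vnorm D!\,\lipnorm h!$ to a globally defined bounded surrogate $h$, transfer the resulting a.e.\ bound to $Df$ on $A_i\cap B(x_0,r')$ by locality, and then cover $A_i$ by countably many such balls using separability (Lindel\"of). Your route dispenses with Lebesgue points and density points altogether, so it is slightly more elementary and more general (only separability and the regularity needed for Lemma \ref{biglip-meas} are used); the price is the two-stage surrogate construction, which you handle correctly: truncating at level $\epsi$ keeps the surrogate in $\lipalg X.$ without increasing the global Lipschitz constant and without spoiling the bound on $\lipnorm\cdot!$, and shrinking $r'$ guarantees that the surrogate still agrees with $f-f(x_0)$ on $A_i\cap B(x_0,r')$, so that Proposition \ref{locality} applies. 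One small slip: to conclude that $f-f(x_0)$ is $(\biglip f(x_0)+2\epsi)$-Lipschitz on all of $A\cap B(x_0,r/2)$ you must allow $z\in B(y,r)$ rather than $z\in B(y,r/2)$, since two points of $B(x_0,r/2)$ may be at distance up to $r$; the same estimate via $\biglip f(y,r)\le\biglip f(x_0)+2\epsi$ holds verbatim at that scale, so nothing is lost.
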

\begin{proof}
  Without loss of generality we assume that $$
  \vnorm D! \le 1.$$
  We apply Lemma \ref{biglip-meas} to $f$ obtaining a measurable
  partition   $$
    X=\bigsqcup_{i=1}^\infty A_i \sqcup \Omega,
  $$ and it sufficies to show that \eqref{rev-der-ineq} holds for
  a.e.~$x\in A_i$. We will prove that \eqref{rev-der-ineq} holds if
  \begin{itemize}
  \item $x$ is a Lebesgue point of $Df$ and
    \item $x$ is a density point of $A_i$.
  \end{itemize}
As $\biglip f$ is continuous on $A_i$, it follows that for each
$\epsi>0$ there
is an $r_0(x,\epsi)>0$ such that if $r\le r_0(x,\epsi)$ and $y\in\ball
x,2r.$, then 
\begin{equation}
  \label{eq:b}
  \biglip f(y)\le \biglip f(x)+\epsi;
\end{equation}
as $\biglip f(\cdot,r)\searrow \biglip f(\cdot)$ uniformly on $A_i$,
it follows that for each $\epsi>0$ there
is an $r_1(x,\epsi)>0$ such that if $r\le r_1(x,\epsi)\le r_0(x,\epsi)$,
then
\begin{equation}
  \label{eq:a}
  \biglip f(y,2r)\le\biglip f(y)+\epsi.
\end{equation}
We now claim that for $r\le r_1(x,\epsi)$ the restriction
$f|_{A_i\cap\ball x,r.}$  has Lipschitz
constant $\biglip f(x)+2\epsi$. To verify the claim, let
$y_1,y_2\in\ball x,r.$. Then $y_2\in\ball y_1,2r.$ and from the
definition of $\biglip f(y,2r)$ we conclude that
$$
\left|f(y_1)-f(y_2)\right|\le\biglip f(y,2r)\dist y_1,y_2.;
$$ but by \eqref{eq:a} we conclude that
$$
\left|f(y_1)-f(y_2)\right|\le(\biglip f(y)+\epsi)\dist y_1,y_2.,
$$ and by \eqref{eq:b} this gives
$$
\left|f(y_1)-f(y_2)\right|\le(\biglip f(x)+2\epsi)\dist y_1,y_2.,
$$
verifying the claim. 
\par We now note that 
$$
(f-f(x))|_{A_i\cap\ball x,r.}
$$ has Lipschitz constant at most $\biglip f(x)+2\epsi$ and that
$$
\supnorm (f-f(x))|_{A_i\cap\ball x,r.}!\le(\biglip f(x)+2\epsi)r;
$$ we can therefore take a MacShane extension $g$ of
$(f-f(x))|_{A_i\cap\ball x,r.}$ with
$$
\lipnorm g!\le\biglip f(x)+\epsi.
$$ We want to bound
$$
\avint_{\ball x,r.}Df(y)\,d\mu(y)
$$ as $r\searrow0$, because this will give an upper bound for
$|Df(x)|$. The Leibniz rule implies that $D(1)=0$ a.e., therefore
$$
\avint_{\ball x,r.}Df(y)\,d\mu(y)=\avint_{\ball
  x,r.}D(f-f(x))(y)\,d\mu(y);
$$ by Proposition \ref{locality} $D(f-f(x))=Dg$ a.e.~in $A_i\cap\ball
x,r.$ and moreover, as $\vnorm D!\le 1$,
$$
\left| Dg\right|\le\biglip f(x)+2\epsi
$$ a.e. Therefore,
\begin{equation}
\begin{split}
  \left|\avint_{\ball x,r.}Df(y)\,d\mu(y)\right|&=
  \left|\avint_{\ball x,r.}D(f-f(x))(y)\,d\mu(y)\right|\\
&\le\frac{1}{\mu(\ball x,r.)}\int_{A_i\cap\ball
  x,r.}|Dg(y)|\,d\mu(y)\\
&+\frac{1}{\mu(\ball x,r.)}\int_{\ball
  x,r.\setminus A_i}|D(f-f(x))(y)|\,d\mu(y)\\
&\le\biglip f(x)+2\epsi+2\lipnorm f!\frac{\mu\left(\ball x,r.\setminus
    A_i\right)}{\mu\left(\ball x,r.\right)};
\end{split}
\end{equation} letting $r\searrow0$ gives the bound
$$
|Df(x)|\le\biglip f(x)+2\epsi.
$$
\end{proof}
\section{Measurable differentiable structures}\label{section_mds}
 In this section we recall the definition of measurable
differentiable structure. In order to make the exposition more
transparent, we decided to first introduce a notion of local independence
for Lipschitz functions and, building on this definition,
recall Lemma \ref{finite-dimensionality} which implies
 the existence of measurable differentiable
structures. This Lemma has been either explicitly or implicitly used
in previous proofs that a metric measure space admits a measurable
differentiable structure \cite{kleiner_mackay}, \cite{keith04} and \cite[Section 4]{cheeger99}.
The definition of local independence makes also precise the
intuitive idea that, if the a space has a differentiable structure,
the Lipschitz functions form, infinitesimally, a finite dimensional
vector space.
To a space possessing a measurable differentiable structure it is possible
to associate a natural measurable cotangent bundle. Using the sections
of this bundle it is possible to construct Sobolev spaces which are
reflexive for $p>1$. In this setting the exterior derivative $d$
extends to Sobolev functions. We finally make an observation relating
$d$ and the property that these Sobolev spaces inject into the
corresponding ${\rm L}^p$ spaces.
\begin{defn}[Measurable Differentiable Structure]
A metric measure space $(X,\metric,\meas)$ 
  has a {\bf measurable differentiable structure} if:
  \begin{itemize}
    \item there is a measurable decomposition
      \begin{equation}X=\bigcup_{\alpha}X_\alpha\cup\Omega;\end{equation}
    \item $\mu(\Omega)=0$;
    \item for each set $X_\alpha$ there are Lipschitz functions
      $\chartfuns$ such that if $f$ is a Lipschitz function,
      there are unique $\elleinfty X_\alpha.$ functions:
      \begin{equation}
        \frac{\partial f}{\partial x_\alpha^j}:X_\alpha\to \real
      \end{equation}
      such that 
      \begin{equation}
        \biglip\left\{f-\sum_{j=1}^{N_\alpha}
            \frac{\partial f}{\partial x_\alpha^j}(x)\chartfun\right\}(x)=0,
      \end{equation}
      for a.e.~$x\in X_\alpha$. The pairs $(X_\alpha,\chartfuns)$ are
      called {\bf differentiable charts} and the $\frac{\partial
        f}{\partial x_\alpha^j}$ are called the {\bf partial
        derivatives of $f$ with respect to the chart functions};
      \item the integer $N_\alpha$ is uniformly bounded. The lowest
        upper bound is called the {\bf dimension of the differentiable structure}.
      \end{itemize}
\end{defn}
\begin{defn}[Local independence of Lipschitz functions]\label{loc_ind_lip}
  Let $f_1,\cdots,f_n$ be Lipschitz functions. We say that they are
  {\bf independent at $x$} if
  \begin{equation}
    \biglip (\lambda_1 f_1 + \cdots \lambda_n f_n)(x)=0
  \end{equation}
  implies
  \begin{equation}
    \lambda_1=\cdots=\lambda_n=0,
  \end{equation}
where $\lambda_i\in\mathbb{R}$.
\end{defn}
Another way of thinking of this notion of independence is the
following. We can define a map
\begin{align}
  \Phi_x: \mathbb{R}^n&\to \mathbb{R}\\
  (\lambda_i)&\mapsto\biglip\left(\sum_{i=1}^n\lambda_if_i\right)(x);
\end{align}
from the properties of $\biglip$ we know that $\Phi_x$ is a
seminorm. The linear independence condition is equivalent to $\Phi_x$
being a norm.
To establish the existence of a measurable differentiable structure the
following principle is usually employed:
\begin{lem}\label{finite-dimensionality}
  Suppose that there is a constant $N$ such that
  if $\left\{f_1,\cdots,f_n\right\}\subset\lipalg X.$ 
  are Lipschitz functions which are 
  independent on a set of positive measure $A$, then $n\le N$.
  Then $X$ admits a measurable differentiable structure whose dimension
  is at most $N$.
\end{lem}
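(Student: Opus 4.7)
The plan is to construct the charts by exhaustion on the local independence dimension and then to solve pointwise almost everywhere for the partial derivatives, using the maximality of that dimension.

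First I would build a measurable partition $X = X_0 \sqcup X_1 \sqcup \cdots \sqcup X_N \sqcup \Omega$ with $\mu(\Omega) = 0$ in the same spirit as the proof of Theorem~\ref{freemodules}. For $k = N, N-1, \ldots, 0$ (in that order), consider the collection of measurable subsets $U$ of (the remainder of) $X$ of positive measure on which there exist $k$ Lipschitz functions independent at every point of $U$. By the hypothesis this collection is empty for $k > N$. An exhaustion picking at each step a subset whose measure is at least $\frac{2}{3}$ of the supremum yields $X_N$; iterating inside $X \setminus X_N$ yields $X_{N-1}$, and so on. The residual set has measure zero because otherwise we could extract a further positive-measure chart. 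Each $X_k$ is itself a countable disjoint union of pieces $U_{k,i}$, each equipped with its own $k$-tuple $(f_1, \ldots, f_k)$ of Lipschitz functions independent at every point of $U_{k,i}$; by construction no $(k+1)$-tuple of Lipschitz functions is independent on any positive-measure subset of $U_{k,i}$. These $U_{k,i}$ with their chart functions are the candidate differentiable charts, and the dimension bound $N_\alpha \le N$ is automatic.

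Next, fix one such chart $(U, f_1, \ldots, f_k)$ and an arbitrary Lipschitz function $f$. By the maximality, the $(k+1)$-tuple $\{f, f_1, \ldots, f_k\}$ fails to be independent at almost every point of $U$ (otherwise the set of points of independence would have positive measure and produce $k+1$ independent functions on it). Hence for a.e.~$x \in U$ there exist scalars $(\lambda_0, \ldots, \lambda_k) \ne 0$ with $\biglip(\lambda_0 f + \sum_{j=1}^k \lambda_j f_j)(x) = 0$. Since $f_1, \ldots, f_k$ are independent at $x$, necessarily $\lambda_0 \ne 0$, and normalizing $\lambda_0 = -1$ gives scalars $a_1(x), \ldots, a_k(x)$ with $\biglip(f - \sum_j a_j(x) f_j)(x) = 0$. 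Uniqueness is immediate: if $a, b \in \mathbb{R}^k$ both give vanishing $\biglip$ at $x$, then by subadditivity $\biglip(\sum_j(a_j - b_j) f_j)(x) = 0$, forcing $a = b$ by pointwise independence of the $f_j$.

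Finally I would establish measurability and essential boundedness of $x \mapsto a(x) = (a_1(x), \ldots, a_k(x))$. For each fixed $\lambda \in \mathbb{R}^k$, Lemma~\ref{biglip-meas} applied to $f - \sum_j \lambda_j f_j$ makes $x \mapsto \biglip(f - \sum_j \lambda_j f_j)(x)$ Borel measurable, and this quantity is continuous in $\lambda$ on $\mathbb{R}^k$. A measurable selection argument in the spirit of Lemma~\ref{meas:dep}, using a countable dense set $\mathbb{Q}^k$ and uniqueness of $a(x)$, yields a Borel representative of $a$. For essential boundedness, note that $\Psi_x(\lambda) := \biglip(\sum_j \lambda_j f_j)(x)$ is a norm on $\mathbb{R}^k$ (by pointwise independence of the $f_j$ combined with the seminorm properties of $\biglip$), and $\Psi_x(a(x)) = \biglip f(x) \le \glip f.$. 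Splitting $U$ further by Lusin and Egorov into countably many subsets on which the norms $\Psi_x$ vary continuously and are uniformly equivalent to the Euclidean norm gives a uniform bound $|a(x)| \le C \glip f.$, so $a_j \in \elleinfty U.$.

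The main obstacle is this last step: upgrading pointwise a.e.~existence of the coefficients to measurable, essentially bounded functions. Everything hinges on $\biglip$ being a pointwise seminorm in its Lipschitz argument, so that standard measurable-selection and Lusin-Egorov arguments apply to the family of norms $\Psi_x$ varying measurably with $x$; the partition and uniqueness steps are comparatively soft.
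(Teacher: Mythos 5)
Your argument is correct and follows essentially the route the paper indicates: an exhaustion modelled on the proof of Theorem \ref{freemodules} to produce the charts, followed by a measurable-selection argument in the spirit of Lemma \ref{meas:dep} (plus Lusin--Egorov) to obtain Borel, essentially bounded partial derivatives, with uniqueness coming from the seminorm property of $\biglip$ and pointwise independence of the chart functions. The paper itself only sketches this proof and defers the details to \cite{kleiner_mackay} and \cite{keith04}, so your write-up supplies the same argument in more detail than the text does.
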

 The proof is a modification of the ideas used to prove 
Theorem \ref{freemodules}. The key point is to show that the partial
derivatives $\chartder[f]$ are measurable, and this can be done by
using Lemma \ref{meas:dep}. Details can be found in
\cite{kleiner_mackay} and \cite[Section 7.2]{keith04}.
\begin{thm}
If the metric measure space $(X,\metric,\meas)$ has a measurable differentiable
  structure, then:
  \begin{itemize}
  \item there exists a measurable cotangent bundle $\cotbund$;
  \item on each chart $\chart$ we have a basis
${\{dx_\alpha^j\}_{j=1}^{N_\alpha}}$ for the fibres of
    $\cotbund[X_\alpha]$;
\item we can define a measurable fibrewise norm by setting:
      \begin{equation}\label{meas:norm}
        \vnorm (v_1,\cdots,v_{N_\alpha})! (x) = \biglip\left\{
            \sum_{j=1}^{N_\alpha}v_j \chartfun
         \right\}(x);
      \end{equation}
  \item if on each chart $\chart$ we define
    \begin{equation}
      d:\lipfun X_\alpha.\to\seccot X_\alpha.
    \end{equation}
    by
    \begin{equation}
      df=\sum_{j=1}^{N_\alpha}\frac{\partial f}{\partial x_\alpha^j}
      d\chartfun,
    \end{equation} 
    then $\vnorm df!\in\elleinfty X_\alpha.$ and
      \begin{equation}
        \vnorm df! (x) = \biglip f (x).
      \end{equation}
    \end{itemize}
Therefore the set of sections $\sections$ is equipped with a norm and
we can define $\ellep \sections.$.
\end{thm}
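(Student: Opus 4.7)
The plan is to build the cotangent bundle chart-by-chart from the given differentiable structure, verify the norm is genuinely a fibrewise norm (not just a seminorm), establish the measurability needed to call the object a measurable bundle, and finally identify $\vnorm df!$ with $\biglip f$ through the defining property of the charts.

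First I would define $\cotbund[X_\alpha] \;=\; X_\alpha \times \mathbb{R}^{N_\alpha}$ as a trivial bundle, declaring $\{dx_\alpha^j\}_{j=1}^{N_\alpha}$ to be the standard frame. To glue two overlapping charts $\chart$ and $(X_\beta,\{x_\beta^k\})$ I would use the uniqueness of partial derivatives: each $x_\beta^k$ is Lipschitz, so on $X_\alpha\cap X_\beta$ there exist unique $\partial x_\beta^k/\partial x_\alpha^j\in\elleinfty X_\alpha\cap X_\beta.$, and these coefficients define the measurable transition matrices. Uniqueness of partials guarantees the cocycle condition and that these maps are isomorphisms on fibres.

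Next I would verify that \eqref{meas:norm} really is a norm. For fixed $v=(v_1,\dots,v_{N_\alpha})\in\mathbb{R}^{N_\alpha}$ the quantity $\vnorm v!(x)$ is $\biglip$ of a single Lipschitz function, so the seminorm properties of $\biglip$ transfer directly; the triangle inequality and positive homogeneity are immediate. For definiteness, suppose $\vnorm v!(x)=0$ on a positive-measure set $A\subset X_\alpha$. Then the zero function and $\sum_j v_j \chartfun$ both represent the same Lipschitz function in the sense that the latter minus the former has vanishing $\biglip$ at each $x\in A$; but uniqueness of the partial derivatives of $\sum_j v_j\chartfun$ (for which the partials are the constants $v_j$) forces $v_j=0$ for all $j$. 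Hence $\vnorm\cdot!(x)$ is a norm on $\mathbb{R}^{N_\alpha}$ for a.e.~$x$. For the measurability of the norm, I would observe that for each fixed $v$, the function $x\mapsto\vnorm v!(x)$ is Borel by Lemma \ref{biglip-meas}, and by homogeneity and the triangle inequality $v\mapsto\vnorm v!(x)$ is continuous; thus $\vnorm\cdot!$ is Carath\'eodory in $(v,x)$ and composition with any measurable section $s\colon X_\alpha\to\mathbb{R}^{N_\alpha}$ yields a measurable function $x\mapsto\vnorm s(x)!(x)$.

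Finally I would establish the identity $\vnorm df!(x)=\biglip f(x)$. Writing $df = \sum_j (\partial f/\partial x_\alpha^j)\, dx_\alpha^j$ and applying \eqref{meas:norm} with the constants $v_j:=(\partial f/\partial x_\alpha^j)(x)$, I get
\begin{equation*}
\vnorm df!(x)=\biglip\Bigl\{\textstyle\sum_{j=1}^{N_\alpha}\tfrac{\partial f}{\partial x_\alpha^j}(x)\,\chartfun\Bigr\}(x).
\end{equation*}
The defining property of the differentiable structure gives $\biglip\{f-\sum_j(\partial f/\partial x_\alpha^j)(x)\chartfun\}(x)=0$ at a.e.~$x$. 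Since $\biglip$ is a seminorm on Lipschitz functions at the point $x$, applying the reverse triangle inequality twice shows $\biglip f(x)$ equals $\biglip$ of the linear combination above. This proves the equality, and since $\biglip f\le \glip f.<\infty$ for any Lipschitz $f$, we obtain $\vnorm df!\in\elleinfty X_\alpha.$.

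The main obstacle, as I see it, is the measurability claim for the fibrewise norm beyond fixed vectors $v$: one needs joint measurability in $(v,x)$ to justify that $\vnorm\cdot!$ extends from a family of norms on $\mathbb{R}^{N_\alpha}$ to a sensible norm on $\ellep \sections.$. The Carath\'eodory argument resolves this, but it relies on exploiting the continuity of $v\mapsto\vnorm v!(x)$, which itself uses positive-homogeneity plus the fact that $\biglip$ of a linear combination is controlled by the absolute values of the coefficients times the $\biglip$ of the generators; one has to be slightly careful since the chart functions are only defined on $X_\alpha$, but this is remedied by the MacShane extension used elsewhere in the paper.
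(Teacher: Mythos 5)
The paper itself states this theorem without proof (deferring to Cheeger), so your task was to supply the standard argument, and most of what you write is sound: the gluing via uniqueness of partials, the Carath\'eodory/joint-measurability argument, and the derivation of $\vnorm df!(x)=\biglip f(x)$ from the defining property together with the reverse triangle inequality for $\biglip$ are all correct.

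There is, however, one genuine gap: the definiteness of the fibrewise norm. Your argument fixes a \emph{single} vector $v$ and shows that $\{x\in X_\alpha:\vnorm v!(x)=0\}$ is null (correctly, via uniqueness of the partial derivatives of the function $\sum_j v_j\chartfun$). From this you conclude ``hence $\vnorm\cdot!(x)$ is a norm for a.e.\ $x$,'' but that is a quantifier exchange that does not follow: even after discarding a null set for every $v$ in a fixed countable dense subset $S$ of the unit sphere, a seminorm $\Phi_x$ whose kernel is a nontrivial subspace $K(x)$ varying with $x$ can still be strictly positive on all of $S$, since $S$ generically misses $K(x)$. Continuity of $v\mapsto\Phi_x(v)$ does not rescue this, because positivity on a dense subset of the sphere does not force positivity everywhere for a seminorm. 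The correct repair is a measurable selection in the spirit of Lemma \ref{meas:dep}: if $\Phi_x$ had nontrivial kernel for all $x$ in a set $A$ of positive measure, one selects measurable $\lambda_1,\dots,\lambda_{N_\alpha}\in\elleinfty A.$, not all vanishing on any positive-measure subset, with $\biglip\bigl(\sum_j\lambda_j(x)\chartfun\bigr)(x)=0$ a.e.\ on $A$ (for instance by measurably minimizing $\Phi_x$ over the sphere, using the joint measurability you already established); then the function $f=0$ admits both $0$ and $(\lambda_j)$ as systems of partial derivatives on $A$, contradicting the uniqueness clause in the definition of a measurable differentiable structure. With this substitution the proof is complete; note that definiteness is genuinely needed here, both for $\{dx_\alpha^j\}$ to be a basis of the fibre and for \eqref{meas:norm} to be a norm rather than a seminorm.
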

The following result is similar to \cite[Theorem 4.48]{cheeger99} and
we omit the proof. However, in this setting the Sobolev spaces used by
Cheeger might trivially reduce to the corresponding ${\rm L}^p$
spaces. The Sobolev spaces we work with are therefore different from those
employed by Cheeger; as far as we understand, the crucial point is
that $\lipsob X.$ does not need to inject in $\ellep X.$, so in this
setting there is no analogue of the uniqueness statement in
\cite[Theorem 4.47]{cheeger99}.
\begin{thm}\label{lip:sobolev}
If the metric measure space $(X,\metric,\meas)$ has a measurable differentiable
  structure, define
\begin{equation}
  \presob X.=\left\{f\in\lipfun X.\cap\ellep X.: df \in \ellep \sections.\right\}
\end{equation} 
and
  \begin{equation}
    {\vnorm f!}_{\lipsob X.} = {\vnorm f!}_{\ellep X.} + {\left\|df 
\right\|}_{\ellep \sections.};
  \end{equation}
then $\left(\presob X.,{\vnorm\cdot!}_{\lipsob X.}\right)$ is a normed
vector space whose completion is denoted by  $\lipsob X.$ (Sobolev
space). The space $\lipsob X.$ has the following properties:
\begin{itemize}
\item for $p>1$ the norm is bi-Lipschitz equivalent to a uniformly
  convex norm;
\item for $p>1$ it is reflexive;
\item as $\lipsob X.$ bi-Lipschitz embedds in $\ellep X.\times
\ellep \sections.$, to each $h\in\lipsob X.$ we can uniquely assign a
pair $(g,\gamma)$ with $g\in\ellep X.$ and
$\gamma\in\ellep \sections.$.
In particular there is a $1$-Lipschitz map
\begin{align}
  J:\lipsob X.&\to\ellep X.\\
  (g,\gamma)&\mapsto g;
\end{align}
\item the exterior differential $d$ extends to $\lipsob X.$
giving a map
\begin{align}
d:\lipsob X.&\to\ellep\sections.\\
(g,\gamma)&\mapsto \gamma;
\end{align}
\end{itemize}
\end{thm}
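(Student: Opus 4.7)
The plan is to realise $\lipsob X.$ concretely via the linear embedding
\begin{equation*}
\iota : \presob X. \to \ellep X. \times \ellep \sections., \qquad f \mapsto (f, df).
\end{equation*}
With the $\ell^1$-sum norm $\vnorm (g,\gamma)! = \vnorm g!_{\ellep X.} + \vnorm \gamma!_{\ellep \sections.}$ on the product, $\iota$ is a linear isometric embedding, so $\vnorm \cdot !_{\lipsob X.}$ inherits the normed-space axioms from the target. One then defines $\lipsob X.$ to be the closure of $\iota(\presob X.)$ inside the product; this space is manifestly complete, and is the completion of $\presob X.$.

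For the second and third bullets, I would pass to the bi-Lipschitz equivalent $\ell^p$-sum norm
\begin{equation*}
\vnorm (g,\gamma)!_p = \left( \vnorm g!_{\ellep X.}^p + \vnorm \gamma!_{\ellep \sections.}^p \right)^{1/p}.
\end{equation*}
For $p>1$ both $\ellep X.$ and $\ellep \sections.$ are uniformly convex, and the $\ell^p$-direct sum of uniformly convex Banach spaces is again uniformly convex; since $\lipsob X.$ is a closed subspace of such a space, it too is uniformly convex and hence, by Milman--Pettis, reflexive.

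The coordinate projections of $\ellep X. \times \ellep \sections.$ onto its two factors are $1$-Lipschitz with respect to the $\ell^1$-sum, so their restrictions to $\lipsob X.$ yield $1$-Lipschitz maps $J$ and $d$ that on the dense subset $\iota(\presob X.)$ recover the original function and its exterior differential. This takes care of the last two bullets.

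The main subtlety --- and the reason one must depart from Cheeger's viewpoint --- is that $J$ need not be injective: when the metric measure space lacks rectifiable curves, as in the positive-measure Cantor subset of $[0,1]$ mentioned in the introduction, there can exist Cauchy sequences $f_n \in \presob X.$ with $f_n \to 0$ in $\ellep X.$ while $df_n$ converges to a nonzero limit. Realising $\lipsob X.$ as the closure inside the product rather than inside $\ellep X.$ accommodates this phenomenon by keeping the full pair $(g,\gamma)$ as the datum of a Sobolev element; whether such pathological sequences actually exist in a given space is the closability question treated separately in Proposition \ref{d:closabilityprop}, and lies outside this theorem.
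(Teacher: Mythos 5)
The paper actually omits the proof of this theorem (it refers to \cite[Theorem 4.48]{cheeger99}), and your overall construction is exactly the intended one: realise $\lipsob X.$ as the closure of the graph embedding $f\mapsto(f,df)$ inside $\ellep X.\times\ellep \sections.$, and obtain $J$ and $d$ as restrictions of the coordinate projections. Your closing remark on the possible non-injectivity of $J$ is also the correct reading of the last two bullets and of Proposition \ref{d:closabilityprop}.

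There is, however, one step that fails as written: the assertion that $\ellep \sections.$ is uniformly convex for $p>1$. The norm on $\ellep \sections.$ is built from the measurable fibrewise norm \eqref{meas:norm}, namely $\|(v_1,\cdots,v_{N_\alpha})\|(x)=\biglip\{\sum_j v_j x_\alpha^j\}(x)$, and these fibre norms are arbitrary norms on $\real^{N_\alpha}$; nothing prevents their unit balls from having flat faces (an $\ell^\infty$-type ball, say), in which case $\ellep \sections.$ is not even strictly convex --- take two distinct unit sections supported on one chart whose values at a.e.\ point lie on a common face of the fibre unit sphere. So Clarkson's inequalities plus the fact that an $\ell^p$-sum of uniformly convex spaces is uniformly convex do not apply directly to $\ellep \sections.$. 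The repair is precisely the content of Cheeger's Theorem 4.48 and is the reason the statement claims only bi-Lipschitz \emph{equivalence} to a uniformly convex norm: since $N_\alpha$ is uniformly bounded by the dimension $N$ of the structure, John's theorem provides on each fibre an inner-product norm that is $\sqrt{N_\alpha}$-bi-Lipschitz to \eqref{meas:norm}, and the John ellipsoid depends measurably on the norm, so one obtains an equivalent measurable fibrewise Euclidean structure with uniform constants. With that modified norm, $\ellep \sections.$ is an ${\rm L}^p$ space of sections valued in finite-dimensional Hilbert spaces, hence uniformly convex; the $\ell^p$-sum with $\ellep X.$ is then uniformly convex, and reflexivity of the closed subspace $\lipsob X.$ follows from Milman--Pettis, reflexivity being an isomorphic invariant. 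Everything else in your argument goes through unchanged.
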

We now note that $\lipsob X.$ is dense in $\ellep X.$. The operator
$d$ is therefore densely defined in $\ellep X.$. We recall the following
definition from Functional Analysis \cite[Chapter 2]{brezis_fun}:
\begin{defn}\label{d:closability}
The exterior differential
\begin{equation}
  d:\lipsob X.\subset\ellep X.\to\ellep\sections.
\end{equation}
is said to be a {\bf closed operator} if
  $f_n\to f$ in $\ellep X.$ and $df_n\to\gamma$ in $\ellep\sections.$
  implies that $f\in\lipsob X.$ and $df=\gamma$.
\end{defn}
The following Proposition will be used in Section \ref{choice_sec}.
\begin{prop}\label{d:closabilityprop}
  The map 
$$
J:\lipsob X.\to\ellep X.
$$ is injective if and only if the operator
$$
d:\lipsob X.\subset\ellep X.\to\ellep\sections.
$$ is closed.
\end{prop}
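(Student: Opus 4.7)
The plan is to exploit the bi-Lipschitz embedding $\iota:\lipsob X.\to\ellep X.\times\ellep\sections.$ given by $h\mapsto(J(h),dh)$, which is supplied by Theorem~\ref{lip:sobolev}. Since $\lipsob X.$ is complete and $\iota$ is bi-Lipschitz onto its image, the image $\Gamma:=\iota(\lipsob X.)$ is automatically a closed linear subspace of the product. The proposition reduces to the observation that $\Gamma$ is the graph of a single-valued operator on $J(\lipsob X.)\subset\ellep X.$ precisely when $J$ is injective, and that in that case the closedness of this operator is automatic.

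For ``$J$ injective $\Rightarrow$ $d$ closed'' I will take a sequence $f_n\in J(\lipsob X.)$ with $f_n\to f$ in $\ellep X.$ and $df_n\to\gamma$ in $\ellep\sections.$, let $h_n\in\lipsob X.$ be the unique preimages $J(h_n)=f_n$, and notice that $(f_n,df_n)=(J(h_n),dh_n)$ is Cauchy in the product. The bi-Lipschitz equivalence of $\vnorm\cdot!_{\lipsob X.}$ with the graph norm forces $(h_n)$ to be Cauchy in $\lipsob X.$; letting $h$ be its limit, continuity of the $1$-Lipschitz maps $J$ and $d$ yields $J(h)=f$ and $dh=\gamma$, which is exactly the closedness condition.

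For the converse I will argue contrapositively: if $J$ is not injective, pick $h\in\lipsob X.$ with $h\ne 0$ and $J(h)=0$, so that the bi-Lipschitz equivalence between $\vnorm\cdot!_{\lipsob X.}$ and $\vnorm J(\cdot)!+\vnorm d\cdot!$ forces $dh\ne 0$. Approximating $h$ by $h_n\in\presob X.$ gives $J(h_n)=h_n\to 0$ in $\ellep X.$ and $dh_n\to dh\ne 0$ in $\ellep\sections.$, so $(0,dh)\in\Gamma$. On the other hand $(0,0)\in\Gamma$ as well, since the zero Lipschitz function has classical differential $0$. Because closedness of $d$ presupposes that $\Gamma$ is the graph of a single-valued function on $J(\lipsob X.)$, this forces $dh=0$, a contradiction.

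The main thing to check carefully is the bi-Lipschitz transport of Cauchy sequences between $\lipsob X.$ and $\Gamma\subset\ellep X.\times\ellep\sections.$; this is immediate once one records that the abstract completion norm $\vnorm\cdot!_{\lipsob X.}$ agrees, up to bi-Lipschitz equivalence, with the graph norm of $d$ inherited from the product. No regularity hypothesis on $(X,\metric,\meas)$ such as doubling or a Poincar\'e inequality enters; the argument is purely functional-analytic and parallels the classical characterization of closable operators via the non-existence of a ``nontrivial null sequence''.
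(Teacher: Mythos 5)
Your proof is correct and follows essentially the same route as the paper: both directions rest on the completeness of $\lipsob X.$ and its bi-Lipschitz embedding into $\ellep X.\times\ellep\sections.$, with the forward implication obtained by transporting a Cauchy sequence and the converse by exhibiting a witness to non-closedness from the failure of injectivity. Your converse, which specializes to a nonzero element of $\ker J$ and uses linearity to force $dh\ne0$, is a slight (and arguably cleaner) variant of the paper's choice of an element $(f,\gamma)$ with $\gamma\ne df$, but the underlying idea is identical.
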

\begin{proof}
  Assume that $J$ is injective. Suppose that $f_n\to f$ in $\ellep
  X.$ and $df_n\to\gamma$ in $\ellep\sections.$. Then $(f_n,df_n)$ is
  a Cauchy sequence in $\lipsob X.$ and so it converges to a limit
  $(f,\gamma)$. As $J$ is injective, $(f,\gamma)=(f,df)$ showing that
  $d$ is closed. Conversely, assume that $J$
  is not injective; we can find $(f,\gamma)\in\lipsob X.$ with
  $\gamma\ne df$. In particular, there is a sequence of Lipschitz functions
  $f_n$ with $f_n\to f$ in $\ellep X.$ and $df_n\to\gamma$ in $\ellep
  X.$. As $\gamma\ne df$, $d$ is not closed.
\end{proof}
\section{Finite dimensionality and derivations}\label{sec_fin_dime}
 In this section we prove a finite dimensionality result, that
is the existence of a measurable differentiable structure, by
assuming an inequality in which
the local
Lipschitz constant of a function
is controlled by finitely many derivations. We have
decided to name this inequality \eqref{derivation-inequality} the
{ \bf reverse infinitesimal derivation inequality}. This condition
should be compared with the ``Lip-derivation'' inequality(ies) studied in 
\cite{gong11}. One difference is that we allow the constant in the
inequality to vary with the point (so we use $\lambda(x)$) but
independently of the Lipschitz functions. The reverse infinitesimal
inequality should also be compared with the ``Lip-lip'' inequality of
\cite{keith04}. An explanation about the terminology, ``Lip'' denotes
the local Lipschitz constant $\biglip$ and ``lip'' the local Lipschitz
constant $\smllip$. Our argument is based on measure
theory and uses linear algebra to imply finite dimensionality.
The interplay between measure theory and linear algebra
is made possible by an approximation argument, Lemma
\ref{continuity}, whose proof uses the notion of precise
representative which we now recall.
\begin{defn}[Precise representative]
  Let $g\in\elleoneloc X.$. If the Lebesgue differentiation theorem 
  holds (e.g.~if $\mu$ is doubling) we can choose for $g$ the {\bf precise
  representative} defined as follows:
  \begin{equation}
    \precise{g}(x)=
    \begin{cases}
      \lim_{r\searrow0}\avint_{\ball x,r.}g(y)\,d\mu(y)&\text{if the
        limit exists}\\
      0&\text{otherwise.}
    \end{cases}
  \end{equation}
  In this section if $D$ is a derivation we will use the notation
  $\precise{D}f$ for the precise representative of $Df$.
\end{defn}
\begin{prop}\label{lebder}
  Let $A\subset X$, $\mu(A)>0$ and
  $\left\{f_1,\cdots,f_n\right\}\subset\lipalg A.$. There is a
  measurable subset $A'\subset A$ such that $\mu(A\setminus A')=0$ and
  for all $x\in A'$, $\left\{c_1,\cdots,c_n\right\}\subset\real$
  \begin{equation}\label{trentanove}
    \precise{D}\left(\sum_{i=1}^nc_if_i\right)(x)=\sum_{i=1}^nc_i\precise{D}f_i(x). 
  \end{equation}
\end{prop}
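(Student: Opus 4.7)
The plan is to reduce the pointwise identity~\eqref{trentanove} for arbitrary real tuples $(c_1,\dots,c_n)$ to a single application of the Lebesgue differentiation theorem for the finite family $Df_1,\dots,Df_n$, exploiting the fact that $D$ is linear as a map into $\elleinfty A.$ and that the precise representative is defined pointwise through ball averages.

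First I would observe that, for any \emph{fixed} tuple $(c_1,\dots,c_n)\in\real^n$, linearity of $D$ yields the equality $D(\sum_i c_if_i)=\sum_i c_i\,Df_i$ as elements of $\elleinfty A.$. Integrating this identity over $\ball x,r.$ and dividing by $\mu(\ball x,r.)$ gives, for every $x\in A$ and every $r>0$,
\begin{equation*}
\avint_{\ball x,r.}D\!\left(\sum_{i=1}^n c_if_i\right)\!(y)\,d\mu(y)
\;=\; \sum_{i=1}^n c_i\avint_{\ball x,r.}Df_i(y)\,d\mu(y).
\end{equation*}
Crucially, the right-hand side is a linear combination of $n$ averaged functions that do not depend on the tuple $(c_1,\dots,c_n)$.

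Next, let $A'\subset A$ be the set of points $x$ at which, for every $i=1,\dots,n$, the limit $\lim_{r\searrow 0}\avint_{\ball x,r.}Df_i(y)\,d\mu(y)$ exists and equals $\precise{D}f_i(x)$. By the Lebesgue differentiation theorem (available by the doubling hypothesis on $\mu$) applied in turn to each of the $n$ functions $Df_i\in\elleinfty A.$, we have $\mu(A\setminus A')=0$. For $x\in A'$ and an \emph{arbitrary} tuple $(c_1,\dots,c_n)\in\real^n$, the right-hand side of the displayed identity converges to $\sum_i c_i\,\precise{D}f_i(x)$ as $r\searrow 0$; hence so does the left-hand side, and by the very definition of the precise representative this forces $\precise{D}(\sum_i c_if_i)(x)=\sum_i c_i\,\precise{D}f_i(x)$, which is~\eqref{trentanove}.

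I do not anticipate a genuine obstacle here: the only potentially delicate issue is uniformity across the continuum of scalar choices $(c_1,\dots,c_n)$, and that is handled automatically because the exceptional sets coming from Lebesgue differentiation are attached to the $n$ fixed functions $Df_i$ rather than to the tuples $c$. Consequently no continuity argument in $c$ or rational approximation is required; a single full-measure set $A'$ serves every real linear combination simultaneously.
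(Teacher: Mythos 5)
Your proposal is correct and follows essentially the same route as the paper: both use linearity of $D$ to write $D(\sum_i c_i f_i)=\sum_i c_i Df_i$ in $\elleinfty A.$, take $A'$ to be the common Lebesgue-point set of the $n$ fixed functions $Df_i$, and then observe that the ball averages of the combination converge to $\sum_i c_i\precise{D}f_i(x)$ for every tuple simultaneously. Your remark that the exceptional set is attached to the $f_i$ rather than to the tuples $(c_1,\dots,c_n)$ is exactly the point of the paper's argument.
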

\begin{proof}
  Let $A'\subset A$ be a full measure subset of $A$ such that
  for each $x\in A'$:
  \begin{equation}
    \precise{D}f_i(x)=\lim_{r\searrow0}\avint_{\ball x,r.}Df_i(y)\,d\mu(y);
  \end{equation}
  if $\left\{c_1,\cdots,c_n\right\}\subset\real$, then
  \begin{equation}
    \lim_{r\searrow0}\avint_{\ball
      x,r.}D\left(\sum_{i=1}^nc_if_i\right)(y)\,
    d\mu(y)=\lim_{r\searrow0}\avint_{\ball
      x,r.}\sum_{i=1}^nc_iDf_i(y)\, d\mu(y);
  \end{equation}
  therefore the limit 
    \begin{equation}
    \lim_{r\searrow0}\avint_{\ball
      x,r.}D\left(\sum_{i=1}^nc_if_i\right)(y)\,
    d\mu(y)
    \end{equation}
exists and equals 
\begin{equation}
\sum_{i=1}^nc_i\precise{D}f_i(x)
\end{equation}
showing that \eqref{trentanove} holds.
\end{proof}
\begin{thm}\label{der-finite-dimensionality}
  Let $(X,\metric,\meas)$ be a doubling metric measure space.
  Assume that:
  \begin{itemize}
   \item there are $N$ derivations $\dervec$ and a nowhere vanishing
  $\lambda\in\elleinfty X.$;
   \item for any Lipschitz function $f$, there is a set $\Omega_f$
     such that 
  \begin{align}
    \mu(\Omega_f)&=0;\\ \label{derivation-inequality}
    \max_{j=1,\cdots,N}|D_jf(x)|&\ge\lambda(x)\biglip f(x)\quad\forall
    x\in{}^c\Omega_f;
  \end{align}
  \end{itemize}
    then $X$ admits of a measurable differentiable structure whose dimension
  is at most $N$. The relation \eqref{derivation-inequality} will be referred
  to as the {\bf reverse infinitesimal derivation inequality}.
\end{thm}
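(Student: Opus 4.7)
The plan is to invoke Lemma~\ref{finite-dimensionality}: it suffices to prove that whenever Lipschitz functions $\{f_1,\dots,f_n\}\subset\lipalg X.$ are independent on some $A\subset X$ with $\mu(A)>0$, one has $n\le N$. Given such $f_1,\ldots,f_n$, the strategy is to feed arbitrary real linear combinations $\sum_i c_i f_i$ into the reverse infinitesimal derivation inequality and to conclude, by linear algebra, that the induced linear map $c\mapsto\bigl(\sum_i c_i\precise{D_j}f_i(x)\bigr)_{j=1}^N$ from $\mathbb{R}^n$ to $\mathbb{R}^N$ has trivial kernel at a generic $x\in A$, forcing $n\le N$.

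The first step is to apply the hypothesis to each of the countably many rational combinations $\sum_i q_i f_i$, $q\in\mathbb{Q}^n$, producing a null set $\Omega_{\sum_i q_i f_i}$ outside of which the inequality holds. The second step is to invoke Proposition~\ref{lebder} for the family $\{f_1,\ldots,f_n\}$ relative to each $D_j$, which after passing to a full-measure subset $A'\subset A$ yields the pointwise identity $\precise{D_j}(\sum_i c_i f_i)(x)=\sum_i c_i\precise{D_j}f_i(x)$ for every $x\in A'$ and every $c\in\mathbb{R}^n$. Shrinking $A'$ further by the union of null sets from step one and of $\{\lambda=0\}$ (which is null by the nowhere-vanishing hypothesis on $\lambda$), the combined form of the inequality reads, for every $x\in A'$ and every $q\in\mathbb{Q}^n$,
\begin{equation*}
\Psi_x(q):=\max_{j=1,\ldots,N}\Bigl|\sum_{i=1}^n q_i\,\precise{D_j}f_i(x)\Bigr|\;\ge\;\lambda(x)\,\biglip\Bigl(\sum_{i=1}^n q_i f_i\Bigr)(x)\;=:\;\lambda(x)\,\Phi_x(q).
\end{equation*}

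At each $x\in A'$, both $\Psi_x$ and $\Phi_x$ are seminorms on the finite-dimensional space $\mathbb{R}^n$ (the first as a maximum of absolute values of linear functionals in $c$, the second by the seminorm property of $\biglip$ recorded after Definition~\ref{biglip-defn}), hence continuous. Extending from the dense subset $\mathbb{Q}^n$, the inequality $\Psi_x(c)\ge\lambda(x)\Phi_x(c)$ holds for every $c\in\mathbb{R}^n$. Independence of $\{f_1,\ldots,f_n\}$ on $A$ is exactly the assertion that $\Phi_x$ is a norm on $\mathbb{R}^n$, so using $\lambda(x)>0$, the seminorm $\Psi_x$ is also a norm. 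Since $\Psi_x(c)=\max_j|(M(x)c)_j|$ for the $N\times n$ matrix $M(x)=(\precise{D_j}f_i(x))$, this matrix must have trivial kernel, which forces $n\le N$.

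The main obstacle I anticipate is the passage from the a.e.~reverse derivation inequality, which for each fixed $c$ holds outside an exceptional null set $\Omega_{\sum_i c_i f_i}$ potentially depending on $c$, to a single pointwise inequality valid uniformly in $c\in\mathbb{R}^n$ at a fixed $x$. The twofold detour through a countable rational dense set and the precise-representative linearity of Proposition~\ref{lebder} is tailored precisely to this: the latter is what turns $\Psi_x$ into a genuine function of $c$ with seminorm structure, which in turn makes the continuity extension from $\mathbb{Q}^n$ to $\mathbb{R}^n$ legitimate.
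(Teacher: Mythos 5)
Your proposal is correct and follows essentially the same route as the paper: the reduction to Lemma~\ref{finite-dimensionality}, the use of Proposition~\ref{lebder} for linearity of precise representatives, and the passage from a countable dense set of coefficient vectors to all of $\mathbb{R}^n$ via the seminorm (hence continuous) structure of $c\mapsto\biglip(\sum_i c_if_i)(x)$ are exactly the content of the paper's Lemma~\ref{continuity}. The only cosmetic difference is the finish: the paper argues by contrapositive, using Lemma~\ref{meas:dep} to extract a measurable dependence relation and then evaluating at a point, whereas you conclude directly that the $N\times n$ matrix $(\precise{D_j}f_i(x))$ has trivial kernel — the same linear algebra, organized more simply.
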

 As we already said, the 
 proof relies on the following approximation argument.
The point is that if we have a linear dependence relation where the
$c_i$ are functions, we would like to treat them as constants so that
the linear dependence relation ``localizes'' at the points of a full
measure subset. 
\begin{lem}\label{continuity}
  Assume that the derivation inequality \eqref{derivation-inequality}
  holds. Let $A\subset X$, $\mu(A)>0$ and
  $\left\{f_1,\cdots,f_n\right\}\subset\lipalg A.$. There is a
  measurable subset $A'\subset A$ such that $\mu(A\setminus A')=0$ and
  for all $x\in A'$, $\left\{c_1,\cdots,c_n\right\}\subset\real$
  \begin{align}\label{continuityrel1}
        \max_{j=1,\cdots,N}\left|\precise{D_j}\left(\sum_{i=1}^nc_if_i
        \right)(x)\right|&\ge\lambda(x)\biglip\left(\sum_{i=1}^nc_if_i
        \right)(x)\\ \label{continuityrel2}
        \left|\precise{D_j}\left(\sum_{i=1}^nc_if_i
        \right)(x)\right|&\le\vnorm D_j!\biglip\left(\sum_{i=1}^nc_if_i
        \right)(x)\quad\text{for $j=1,\cdots,N$.}
  \end{align}
\end{lem}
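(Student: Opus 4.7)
\textbf{Proof plan for Lemma \ref{continuity}.} The key idea is to first handle rational coefficient tuples by countable exhaustion, then use the continuity of both sides of the inequalities in the coefficient vector $c\in\mathbb{R}^n$ to pass to the limit.

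First, for each rational tuple $q=(q_1,\dots,q_n)\in\mathbb{Q}^n$, set $f_q=\sum_{i=1}^n q_if_i$. Applying the hypothesis \eqref{derivation-inequality} to $f_q$ gives a null set $\Omega_q$ outside of which $\max_j|D_jf_q(x)|\ge\lambda(x)\biglip f_q(x)$; applying Theorem \ref{rever-der-ineq} to $f_q$ for each $j=1,\dots,N$ gives null sets $\Omega'_{q,j}$ outside of which $|D_jf_q(x)|\le\vnorm D_j!\biglip f_q(x)$. The $D_jf_q$ also agree with their precise representatives $\precise{D_j}f_q$ off a further null set. Call the countable union of all these null sets $\Omega_0$, which is still null. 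Finally, apply Proposition \ref{lebder} to the finite family $\{f_1,\dots,f_n\}$ to obtain a null set $\Omega_1$ outside of which $\precise{D_j}(\sum c_if_i)(x)=\sum c_i\precise{D_j}f_i(x)$ for \emph{all} real coefficients $c_i$ and all $j$. Let $A'=A\setminus(\Omega_0\cup\Omega_1)$.

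Fix $x\in A'$ and $c=(c_1,\dots,c_n)\in\mathbb{R}^n$, and choose $q^{(k)}\in\mathbb{Q}^n$ with $q^{(k)}\to c$. By Proposition \ref{lebder}, the function $c\mapsto\precise{D_j}(\sum c_if_i)(x)=\sum c_i\precise{D_j}f_i(x)$ is linear, hence continuous in $c$; this also makes $c\mapsto\max_j|\precise{D_j}(\sum c_if_i)(x)|$ continuous. The seminorm property of $\biglip$ gives, for any $c,c'\in\mathbb{R}^n$,
\begin{equation*}
\bigl|\biglip(\textstyle\sum c_if_i)(x)-\biglip(\sum c'_if_i)(x)\bigr|\le\biglip(\textstyle\sum(c_i-c'_i)f_i)(x)\le\sum_{i=1}^n|c_i-c'_i|\,\biglip f_i(x),
\end{equation*}
so $c\mapsto\biglip(\sum c_if_i)(x)$ is continuous in $c$ as well (since $\biglip f_i(x)\le\glip f_i.<\infty$).

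Since $x\in A'$ and $q^{(k)}\in\mathbb{Q}^n$, both inequalities \eqref{continuityrel1} and \eqref{continuityrel2} hold with $c$ replaced by $q^{(k)}$. Passing $k\to\infty$ and using the continuity statements above, both inequalities transfer to the real tuple $c$. This establishes the conclusion for every $x\in A'$ and every $c\in\mathbb{R}^n$. The only mild subtlety is keeping track of the passage from $D_jf_q$ to $\precise{D_j}f_q$ on the exceptional rational level, but this costs only countably many additional null sets; no deeper obstacle arises.
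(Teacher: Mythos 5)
Your proof is correct and follows essentially the same strategy as the paper's: reduce to a countable dense set of coefficient tuples (the paper uses a countable dense subset of $S^{n-1}$ together with homogeneity, you use $\mathbb{Q}^n$ directly), discard a countable union of null sets, and then transfer the inequalities to arbitrary real coefficients using the linearity of the precise representatives (Proposition \ref{lebder}) and the seminorm property of $\biglip$. The paper phrases the limiting step as an explicit $\epsi$-error estimate rather than a continuity-in-$c$ statement, but the content is identical.
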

\begin{proof}
  Let $\Psi\subset S^{n-1}$ be a countable dense subset of the unit
  sphere and let
  \begin{equation}
    \Psi(f_1,\cdots,f_n)\equiv\left\{\sum_{i=1}^n a_i
    f_i:\left(a_1,\cdots, a_n\right)\in\Psi\right\}.
  \end{equation}
  Given a function $f\in\lipalg A.$ let $\Omega_f$ denote the set
  where either one of the followings fails:
    \begin{align}\label{cinquantadue}
        \max_{j=1,\cdots,N}\left|\precise{D_j}f
        (x)\right|&\ge\lambda(x)\biglip f
        (x)\\\label{cinquantatre}
        \left|\precise{D_j}f
        (x)\right|&\le\vnorm D_j!\biglip f
        (x)\quad\text{for $j=1,\cdots,N$;}
  \end{align}
  by assumption and by Theorem \ref{rever-der-ineq}, $\mu(\Omega_f)=0$.
Let   \begin{equation}
    \Omega_\Psi(f_1,\cdots,f_n)=\bigcup_{f\in\Psi(f_1,\cdots,f_n)}\Omega_f;
  \end{equation}
% and let $A'\subset A\setminus \Psi(f_1,\cdots,f_n)$ be a full measure subset
% where the conclusions of Proposition \ref{lebder} hold for
% $\left\{D_1,\cdots,D_N\right\}$ and $\left\{f_1,\cdots,f_n\right\}$.
then  for  $x\in A\setminus\Omega_\Psi(f_1,\cdots,f_n)$,
\eqref{cinquantadue} and \eqref{cinquantatre} hold for any multiple
$cf$ with
$f\in\Psi(f_1,\cdots,f_n)$ and $c\in\real$. Let us fix some
$\left\{c_1,\cdots,c_n\right\}\subset\real$; for any $\epsi>0$ there
is a 
\begin{equation}
  \left(b_1,\cdots,b_n\right)=b\cdot\left(a_1,\cdots, a_n\right)
\end{equation}
such that
\begin{align}
  \left(a_1,\cdots, a_n\right)&\in\Psi;\\
  \sum_{i=1}^n|c_i-b_i|&\le\epsi;
\end{align}
in particular
  \begin{align}\label{cinquantaotto}
        \max_{j=1,\cdots,N}\left|\precise{D_j}\left(\sum_{i=1}^nb_if_i
        \right)(x)\right|&\ge\lambda(x)\biglip\left(\sum_{i=1}^nb_if_i
        \right)(x)\\ \label{cinquantanove}
        \left|\precise{D_j}\left(\sum_{i=1}^nb_if_i
        \right)(x)\right|&\le\vnorm D_j!\biglip\left(\sum_{i=1}^nb_if_i
        \right)(x)\quad\text{for $j=1,\cdots,N$.}
  \end{align}
Let
\begin{align}
  C_1&=\bigvee\lipnorm f_i!;\\
  C_2&=\bigvee\vnorm D_j!;\\
\end{align}
then
\begin{equation}
  \left|\precise{D_j}f_i(x)\right|\le C_1C_2.
\end{equation}
 We now make two
estimates:\begin{equation}
\begin{split}
  \biglip\left(\sum_{i=1}^n(c_i-b_i)f_i\right)(x)&\le\sum_{i=1}^n|c_i-b_i|\biglip
  f_i(x)\\
&\le\epsi C_1;
\end{split}
\end{equation}
and
\begin{equation}
  \begin{split}
    \left|\precise{D_j}\left(\sum_{i=1}^nb_if_i\right)(x)
    -
    \precise{D_j}\left(\sum_{i=1}^nc_if_i\right)(x)\right|
    &=\left|\sum_{i=1}^n(b_i-c_i)\precise{D_j}f_i(x)\right|\\
    &\le\sum_{i=1}^n|b_i-c_i|\left|\precise{D_j}f_i(x)\right|\\
    &\le\epsi C_1C_2.
  \end{split}
\end{equation}
Substitution of the last two estimates into \eqref{cinquantaotto} and
\eqref{cinquantanove} leads to
  \begin{align}
        \max_{j=1,\cdots,N}\left|\precise{D_j}\left(\sum_{i=1}^nc_if_i
        \right)(x)\right|&\ge\lambda(x)\biglip\left(\sum_{i=1}^nc_if_i
        \right)(x)-\epsi C_1C_2-\epsi C_1\lambda(x);\\
        \left|\precise{D_j}\left(\sum_{i=1}^nc_if_i
        \right)(x)\right|&\le\vnorm D_j!\biglip\left(\sum_{i=1}^nc_if_i
        \right)(x)+2\epsi C_1 C_2\quad\text{for $j=1,\cdots,N$.}
  \end{align}
Letting $\epsi\searrow0$ completes the proof of \eqref{continuityrel1}
and \eqref{continuityrel2}.
\end{proof}
\begin{proof}[Proof of Theorem \ref{der-finite-dimensionality}]
  The proof is reduced to Proposition \ref{finite-dimensionality}. We
  assume that there are $n$ Lipschitz
  functions $\left\{f_1,\cdots,f_n\right\}\subset\lipfun X.$
 which are independent at each point
  $x\in A$, where $\mu(A)>0$. We show that $n\le N$ 
  arguing by contrapositive: we show that if $n\ge N$ then the
  functions $\left\{f_1,\cdots,f_n\right\}$ are dependent on a
  positive measure subset of $A$. Without loss of generality we can assume
  that
  $A$ is bounded and replace each $f_i$ by
  \begin{equation}
    \left(f_i\wedge\sup_{A}|f_i|\right)\vee\left(-\sup_{A}|f_i|\right)
  \end{equation}
  so that 
  \begin{equation}
    \left\{f_1,\cdots,f_n\right\}\subset\lipalg X..
  \end{equation} As
  $\lambda$ is nowhere vanishing, we can suppose that $\lambda\ge
  C>0$ on $A$.
  By Lemma \ref{continuity} there is $A'\subset A$ such that
  $\mu(A\setminus A')=0$ and \eqref{continuityrel1} and
  \eqref{continuityrel2} hold on $A'$ (with $\lambda$ replaced by
  $C$):
  \begin{align}\label{continuityrel3}
        \max_{j=1,\cdots,N}\left|\precise{D_j}\left(\sum_{i=1}^nc_if_i
        \right)(x)\right|&\ge C\biglip\left(\sum_{i=1}^nc_if_i
        \right)(x)\\ 
        \left|\precise{D_j}\left(\sum_{i=1}^nc_if_i
        \right)(x)\right|&\le\vnorm D_j!\biglip\left(\sum_{i=1}^nc_if_i
        \right)(x)\quad\text{for $j=1,\cdots,N$.}
  \end{align}
Let us consider the matrix
  \begin{gather} F = 
    \begin{pmatrix}
      \precise{D_1}f_1 &\cdots &\precise{D_1} f_n\\
      \vdots &\cdots&\vdots\\
      \precise{D_N}f_1&\cdots&\precise{D_N}f_n\\
    \end{pmatrix},
  \end{gather}
  with entries in $\elleinfty A'.$. 
  Since $n>N$,there is a measurable $B\subset A'$ with $\mu(B)>0$ and
  $\rank F(x)=k<n$ for $x\in B$.
   Without loss of generality, we can assume that the first
  $k$ columns of $F$ are linearly independent on $B$
  and the first $k+1$ columns of $F$ 
  are linearly dependent on $B$. By Lemma \ref{meas:dep} there are
  $\lambda_i\in\elleinfty B.$ with 
  \begin{equation}
        \mu\left(\left\{x\in B:\forall i,\lambda_i(x)=0\right\}\right)=0.
        \end{equation}
  and
  \begin{equation}
    \sum_{i=1}^{k+1} \lambda_i(x)\,(\precise{D_j}f_i)(x)=0,
  \end{equation}
  for all $x\in B$ and all $j=1,\cdots,N$. We now choose $x\in B$ and define
  $c_i=\lambda_i(x)$. By Proposition \ref{lebder}
  we have
  \begin{equation}
    \left|\precise{D_j}\left(\sum_{i=1}^{k+1}c_if_i
        \right)(x)\right|=0\quad\text{for $j=1,\cdots,N$}
  \end{equation}
and by \eqref{continuityrel3}
  \begin{equation}
    \biglip (\sum_{i=1}^{k+1} c_i f_i)(x)=0.
  \end{equation}
  So the $\left\{f_1,\cdots,f_n\right\}$ are dependent at a.e.~$x\in B$.
\end{proof}
\section{Choice of the chart functions}\label{choice_sec}
 In this section we present some results connected with the
choice of the chart
functions. The starting point is the representation formula \eqref{eq:representation} for
derivations if the space admits a measurable differentiable
structure. This formula  has an interesting consequence: if
 the partial derivatives are known to be derivations,
they give a basis for the module of derivations of the chart
(Corollary \ref{freemodules2}). This naturally leads to the following question:
when are the partial derivatives 
 derivations? We have found two sufficient conditions but we have been unable to
 find a complete answer. If the answer were negative, then there would
 be two kinds of differentiable structures and those in which the
 partial derivatives are also derivations would exhibit a more regular
 behaviour. We next investigate the choice of the chart
functions generalizing the results of \cite{keith04bis}. The main result is that, knowing that the partial
derivatives are derivations, the chart
functions can be chosen among a generating set for the Lipschitz algebra. This
implies immediately that the chart functions can be chosen among
distance functions. 
\begin{lem}\label{representation}
  Suppose the doubling metric measure space $(X,\metric,\meas)$ has a
  measurable differentiable structure, and let $\chart$ be a chart with
  $\chartfuns\subset\lipalg X_\alpha.$. If
  $D\in\dermod[X_\alpha]$ and $f\in\lipalg X_\alpha.$, then
\begin{equation}
  \label{eq:representation}
  Df=\sum_{j=1}^{N_\alpha}\chartder[f]D\chartfun.
\end{equation}
\end{lem}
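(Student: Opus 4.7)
The plan is to reduce the representation formula to the localized derivation inequality (Theorem \ref{rever-der-ineq}) by exploiting the defining property of the chart, namely that $\biglip\{f-\sum_j\chartder[f](x_0)\chartfun\}(x_0)=0$ for a.e.\ $x_0\in X_\alpha$. The obstacle is that the coefficients $\chartder[f](x_0)$ depend on $x_0$, so one cannot directly invoke Theorem \ref{rever-der-ineq} on the single Lipschitz function whose value at each point would define a distinct function. The standard remedy is to first prove the inequality for a countable dense set of constant coefficients and then pass to arbitrary coefficients by continuity in $\vec a$.

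First I would fix precise representatives (in the sense of the definition preceding Proposition \ref{lebder}) of $Df$ and of each $Dx_\alpha^j$, and choose a countable dense subset $\Psi\subset\real^{N_\alpha}$. For each $\vec a=(a_1,\dots,a_{N_\alpha})\in\Psi$ apply Theorem \ref{rever-der-ineq} to the Lipschitz function $g_{\vec a}=f-\sum_j a_j\chartfun$, obtaining a null set $\Omega_{\vec a}$ off of which $|Dg_{\vec a}(x)|\le\vnorm D!\,\biglip g_{\vec a}(x)$. Let
\[
\Omega=\bigcup_{\vec a\in\Psi}\Omega_{\vec a}\ \cup\ \bigl\{x_0:x_0\text{ is not a common Lebesgue point of }Df,\,Dx_\alpha^1,\dots,Dx_\alpha^{N_\alpha}\bigr\}\ \cup\ \Omega',
\]
where $\Omega'$ is the null set on which the chart identity fails. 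By Proposition \ref{lebder}, applied to $\{f,\chartfuns\}$, possibly enlarging $\Omega$ by another null set, for $x_0\notin\Omega$ and every $\vec a\in\Psi$ the linearity $\precise{D}g_{\vec a}(x_0)=\precise{D}f(x_0)-\sum_j a_j\precise{D}\chartfun(x_0)$ holds, so that
\[
\Bigl|\precise{D}f(x_0)-\sum_j a_j\precise{D}\chartfun(x_0)\Bigr|\le\vnorm D!\,\biglip\Bigl\{f-\sum_j a_j\chartfun\Bigr\}(x_0)\qquad(\vec a\in\Psi).
\]

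Next I would extend the inequality from $\vec a\in\Psi$ to all $\vec a\in\real^{N_\alpha}$ by continuity in $\vec a$. The left-hand side is manifestly linear in $\vec a$, hence continuous; for the right-hand side the seminorm property of $\biglip$ (recalled after Definition \ref{biglip-defn}) gives
\[
\Bigl|\biglip\{f-\textstyle\sum_j a_j\chartfun\}(x_0)-\biglip\{f-\sum_j b_j\chartfun\}(x_0)\Bigr|\le \sum_j|a_j-b_j|\,\biglip\chartfun(x_0),
\]
and since $\biglip\chartfun(x_0)\le\glip\chartfun.<\infty$, the map $\vec a\mapsto\biglip\{f-\sum_j a_j\chartfun\}(x_0)$ is Lipschitz in $\vec a$. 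Density of $\Psi$ then yields the inequality for every $\vec a\in\real^{N_\alpha}$, again for all $x_0\notin\Omega$.

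Finally, for each such $x_0$ I would specialize by choosing $\vec a=(\chartder[f](x_0))_{j=1}^{N_\alpha}$, which is permissible since $\chartder[f](x_0)$ is well defined off a null set that can be absorbed into $\Omega$. By the defining property of the differentiable structure the right-hand side becomes $\vnorm D!\cdot 0=0$, giving
\[
\precise{D}f(x_0)=\sum_{j=1}^{N_\alpha}\chartder[f](x_0)\,\precise{D}\chartfun(x_0)\qquad\text{for a.e.\ }x_0\in X_\alpha,
\]
which is \eqref{eq:representation}. The main technical point, and the only place where doubling is genuinely used, is the passage to precise representatives and the application of Proposition \ref{lebder}; everything else is the continuous extension trick encapsulated in Lemma \ref{continuity}.
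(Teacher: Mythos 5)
Your argument is correct and is essentially the paper's own proof: the paper packages your countable-dense-coefficients step and the continuity in $\vec a$ into Lemma \ref{continuity2} (whose proof is the same $\Psi$-approximation you carry out inline, via Lemma \ref{continuity} and Proposition \ref{lebder}), and then specializes the coefficients to $(-1,\frac{\partial f}{\partial x_\alpha^1}(z),\dots,\frac{\partial f}{\partial x_\alpha^{N_\alpha}}(z))$ exactly as you do. No substantive difference.
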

 Before giving the proof of the Lemma we will restate part of
Lemma \ref{continuity} and of Proposition \ref{lebder}. The point is
that in the proof of Lemma \ref{continuity} the proofs of the
statements of \eqref{continuityrel1} and \eqref{continuityrel2} are
indepedent. While \eqref{continuityrel1} depends on the reverse infinitesimal derivation inequality \eqref{derivation-inequality}, \eqref{continuityrel2} is just
a consequence of the localized derivation inequality
\eqref{rever-der-ineq}
(which is true in a doubling metric space or, more generally, in any
metric measure space where the Lebesgue Differentiation Theorem
holds).
\begin{lem}\label{continuity2}
  Let $A\subset X$ be a measurable subset of positive measure, let
  $$\left\{f_1,\cdots,f_n\right\}\subset\lipalg A.$$ and let
  $$\left\{D_1,\cdots,D_N\right\}\subset\dermod[A].$$ There is a
  measurable subset $A'\subset A$ such that $\mu(A\setminus A')=0$ and
  for all $x\in A'$, $\left\{c_1,\cdots,c_n\right\}\subset\real$
  \begin{align}\label{continuityrel4}
        \left|\precise{D_j}\left(\sum_{i=1}^nc_if_i
        \right)(x)\right|&\le\vnorm D_j!\biglip\left(\sum_{i=1}^nc_if_i
        \right)(x)\quad\text{for $j=1,\cdots,N$.}\\
        \label{linearcontinuity}
       \precise{D_j}\left(\sum_{i=1}^nc_if_i\right)(x)&=\sum_{i=1}^nc_i\precise{D_j}f_i(x).\quad\text{for $j=1,\cdots,N$.}\\ 
  \end{align}
\end{lem}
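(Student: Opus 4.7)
The plan is to adapt the proof of Lemma \ref{continuity}, taking advantage of the remark the author makes: the estimate \eqref{continuityrel2} there was derived from the localized derivation inequality \eqref{rev-der-ineq} alone, with no use of \eqref{derivation-inequality}, so the same argument delivers \eqref{continuityrel4} verbatim. The new statement \eqref{linearcontinuity} is purely a linearity assertion for precise representatives and will be handled by Proposition \ref{lebder}. I would build the full-measure set $A'$ as a finite intersection of the individual sets produced by these two ingredients.

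For \eqref{linearcontinuity}, I would apply Proposition \ref{lebder} to each derivation $D_j$ and the family $\{f_1,\cdots,f_n\}$, obtaining sets $A_j\subset A$ of full measure on which the identity $\precise{D_j}(\sum_i c_i f_i)(x)=\sum_i c_i \precise{D_j} f_i(x)$ holds simultaneously for all $(c_1,\cdots,c_n)\in\real^n$ (the key point in \ref{lebder} is that $x$ is a common Lebesgue point of the finitely many $D_jf_i$, and at such a point both sides are just linear combinations of limits of averages, so linearity in the coefficients is automatic). Set $A_0=\bigcap_{j=1}^N A_j$.

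For \eqref{continuityrel4}, I would reproduce the density argument of Lemma \ref{continuity}. Choose a countable dense subset $\Psi\subset S^{n-1}$, form $\Psi(f_1,\cdots,f_n)=\{\sum_i a_if_i:(a_i)\in\Psi\}$, and for each $f$ in this countable family invoke Theorem \ref{rever-der-ineq} to obtain a null set $\Omega_f$ outside of which $|\precise{D_j}f|\le\vnorm D_j!\biglip f$ for every $j$. The union $\Omega_\Psi=\bigcup_{f\in\Psi(f_1,\cdots,f_n)}\Omega_f$ remains null, and the inequality extends immediately to every scalar multiple $cf$ with $f\in\Psi(f_1,\cdots,f_n)$. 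I would then define $A'=A_0\setminus\Omega_\Psi$.

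The only non-trivial step is passing from the countable dense set of tuples $(b_1,\cdots,b_n)=b(a_1,\cdots,a_n)$ with $(a_i)\in\Psi$ to an arbitrary $(c_1,\cdots,c_n)\in\real^n$ on the fixed set $A'$. This is where I would use exactly the same two estimates as in Lemma \ref{continuity}: the seminorm property of $\biglip$ gives $\biglip(\sum_i(c_i-b_i)f_i)(x)\le\epsi\max_i\lipnorm f_i!$, while \eqref{linearcontinuity} on $A_0$ combined with the a.e.~pointwise bound $|\precise{D_j}f_i|\le\vnorm D_j!\lipnorm f_i!$ (which itself follows from Theorem \ref{rever-der-ineq} and can be absorbed into $A_0$) controls the difference $|\precise{D_j}(\sum_i c_if_i)(x)-\precise{D_j}(\sum_i b_if_i)(x)|$ by $\epsi\max_i\lipnorm f_i!\cdot\max_j\vnorm D_j!$. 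Letting $\epsi\searrow0$ in the inequality valid for $\sum_i b_if_i$ closes the argument. The main obstacle, as in Lemma \ref{continuity}, is precisely this simultaneous-in-$(c_i)$ passage for pointwise-defined precise representatives; it is disposed of by the continuity supplied by the seminorm structure of $\biglip$ and by \eqref{linearcontinuity}, which is why the latter must be proved first (or at least in parallel) and folded into the choice of $A_0$.
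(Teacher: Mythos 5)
Your proposal is correct and follows essentially the route the paper intends: the paper omits a written proof of Lemma \ref{continuity2} precisely because, as it remarks, \eqref{continuityrel4} is the part of Lemma \ref{continuity} that uses only the localized derivation inequality \eqref{rev-der-ineq} (via the countable dense subset of $S^{n-1}$ and the two $\epsi$-estimates you reproduce), while \eqref{linearcontinuity} is exactly Proposition \ref{lebder} applied to each $D_j$ and intersected over $j$. Your construction of $A'$ as the intersection of the Lebesgue-point sets minus the null set $\Omega_\Psi$ matches the paper's argument.
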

\begin{proof}
  [Proof of Lemma \ref{representation}]
Without loss of generality we assume that $X_\alpha$ is bounded,
 $\meas(X_\alpha)<\infty$.
 We will show that given $f\in\lipalg X_\alpha.$ and
$D\in\dermod[X_\alpha]$, there is a
measurable subset $C_{f,D}\subset X_\alpha$ with
$\meas(X_\alpha\setminus C_{f,D})=0$ and for all $z\in C_{f,D}$,
\begin{equation}\label{eq:61c}
  \precise{D}f(z)=\sum_{j=1}^{N_\alpha}\chartder[f](z)\precise{D}\chartfun(z).
\end{equation}
This will imply \eqref{eq:representation}.
We apply Lemma \ref{continuity2} with $A=X_\alpha$,
$$
  \left\{D_1,\cdots,D_N\right\}=\left\{D\right\},
$$ and $$
\left\{f_1,\cdots,f_n\right\}=\left\{f,x^1_\alpha,\cdots,x^{N_\alpha}_\alpha\right\},
$$ to obtain a measurable subset $A_{f,D}\subset X_\alpha$ such that
\begin{itemize}
\item $\meas(X_\alpha\setminus A_{f,D})=0$
\item for all $z\in A_{f,D}$,
  $\left\{a,c_1,\cdots,c_{N_\alpha}\right\}\subset\real$,
  \begin{align}
    \label{eq:61a}
        \left|\precise{D}\left(af+\sum_{j=1}^{N_\alpha}c_j\chartfun
        \right)(z)\right|&\le\vnorm D!\biglip\left(af+\sum_{j=1}^{N_\alpha}c_j\chartfun
        \right)(z).\\\label{eq:61b} 
        \precise{D}\left(af+\sum_{j=1}^{N_\alpha}c_j\chartfun
        \right)(z)&=a\precise{D}f(z)+\sum_{j=1}^{N_\alpha}c_j\precise{D}\chartfun(z).
  \end{align}
\end{itemize}
From the definition of measurable differentiable structure there are a
measurable subset $B_{f,D}\subset X_\alpha$ and maps:
$$
\chartder[f]:B_{f,D}\to\real\quad\text{for $j=1,\cdots,N_\alpha$}
$$ such that
\begin{itemize}
\item $\meas(X_\alpha\setminus B_{f,D})=0$;
\item $\forall z\in B_{f,D}$
$$
\sup_{j=1,\cdots,N_\alpha}\sup_{z\in
  B_{f,D}}\left|\chartder[f](z)\right|\le C\glip f.,
$$
\item $\forall z\in B_{f,D}$
  \begin{equation}
    \label{eq:61d}
    \biglip\left(f-\sum_{j=1}^{N_\alpha}\chartder[f](z)\chartfun\right)(z)=0.
  \end{equation}
\end{itemize}
If we let $C_{f,D}=A_{f,D}\cap B_{f,D}$, set
$$
\left(a,c_1,\cdots,c_{N_\alpha}\right)=
\left(-1,\frac{\partial f}{\partial x_\alpha^1}(z),\cdots,
\frac{\partial f}{\partial x_\alpha^{N_\alpha}}(z)\right)
$$ in \eqref{eq:61a}, apply \eqref{eq:61d} and finally use
\eqref{eq:61b}
we deduce \eqref{eq:61c}.
\end{proof}
\begin{cor}
  Suppose the doubling metric measure space $(X,\metric,\meas)$ has a measurable differentiable
structure which has dimension $N$. Then $\dermodsp$ has rank locally
bounded by $N$, in particular Theorem \ref{freemodules} applies.
\end{cor}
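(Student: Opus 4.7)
The plan is to show directly that for any $U \subset X$ with $\mu(U) > 0$ the module $\dermod$ has \innominato\ at most $N$; the claim about Theorem \ref{freemodules} is then immediate by invoking it.

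First I would fix such a $U$ together with derivations $D_1,\ldots,D_n \in \dermod$ with $n > N$, and aim to exhibit a nontrivial $\elleinfty U.$-linear dependence. Using the measurable decomposition $X = \bigcup_\alpha X_\alpha \cup \Omega$ supplied by the differentiable structure, I partition $U$ modulo a null set as $U = \bigsqcup_\alpha U_\alpha$ with $U_\alpha = U \cap X_\alpha$, discarding those of measure zero. On each remaining $U_\alpha$, Lemma \ref{representation} gives
\begin{equation*}
D_i f = \sum_{j=1}^{N_\alpha} \chartder[f] \cdot D_i \chartfun
\end{equation*}
for every $f \in \lipalg X_\alpha.$. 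Since $N_\alpha \le N < n$, the $n$ row vectors $v_i(x) = (D_i \chartfun(x))_{j=1}^{N_\alpha} \in \real^{N_\alpha}$ are linearly dependent at every $x \in U_\alpha$.

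Next I would extract a measurable nontrivial relation among the $v_i$. This is precisely the situation handled by the partitioning-plus-Lemma \ref{meas:dep} argument used in the proof of Proposition \ref{ducentoventidue}: partition $U_\alpha$ into finitely many Borel pieces according to the pointwise rank $r(x) = \dim\realspan v_1(x),\ldots,v_n(x).$ and to which subfamily of $\{v_1,\ldots,v_n\}$ is a minimal linearly dependent subset. On each piece, Lemma \ref{meas:dep} produces essentially bounded coefficients, not all zero, annihilating the selected subfamily; extending by zero to the other indices and pasting across pieces yields $\lambda_1^\alpha,\ldots,\lambda_n^\alpha \in \elleinfty U_\alpha.$ with $\sum_i \lambda_i^\alpha D_i \chartfun = 0$ a.e.~on $U_\alpha$ for each $j$, and with at least one $\lambda_i^\alpha$ not a.e.~zero. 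Substituting into the representation formula shows that the derivation $\sum_i \lambda_i^\alpha D_i|_{U_\alpha}$ annihilates every Lipschitz function and hence vanishes in $\dermod[U_\alpha]$.

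Finally I would glue the chart-level relations into a global one by defining $\lambda_i \in \elleinfty U.$ through $\lambda_i|_{U_\alpha} = \lambda_i^\alpha$. Then $\sum_i \lambda_i D_i = 0$ in $\dermod$, and since at least one $U_\alpha$ has positive measure and contributes a nontrivial coefficient vector, the $\lambda_i$ are not all a.e.~zero; this yields the dependence and thus the local bound on the \innominato. The main technical nuisance is the bookkeeping in the second paragraph, because the pointwise rank and the choice of a minimal dependent subfamily are not globally constant and force the finite Borel partition of $U_\alpha$ before Lemma \ref{meas:dep} can be applied. Nothing new is required beyond that technique; the novelty here is that, thanks to the representation formula, one replaces a generating set by the finitely many chart functions and bypasses any weak$^\star$-continuity argument.
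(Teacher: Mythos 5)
Your argument is correct and follows essentially the same route as the paper: restrict to a chart $X_\alpha$, use the representation formula \eqref{eq:representation} to reduce a relation among $n>N\ge N_\alpha$ derivations to a pointwise relation among $n$ vectors in $\real^{N_\alpha}$, extract measurable coefficients via Lemma \ref{meas:dep}, and feed them back into the representation formula. The only (harmless) difference is that you cover all of $U$ and glue across charts, whereas the paper notes that a nontrivial relation on a single positive-measure subset of one chart, extended by zero, already negates linear independence.
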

\begin{proof}
  As the charts measurably partition $X$, it suffices to show that if
  $U\subset X_\alpha$ has positive measure and if the derivations
  $$
  \left\{D_1,\cdots,D_n\right\}\subset\dermod[U]
 $$ are linearly independent, then $n\le N_\alpha\le N$. We argue by
 contrapositive, that is, by showing that if $n>N_\alpha$, the derivations
 cannot be linearly independent. Using Lemma \ref{continuity2} we find
 a set $U'\subset U$ with $\meas(U\setminus U')=0$ and
 \eqref{continuityrel3} and \eqref{continuityrel4} hold for the
 derivations $\left\{D_1,\cdots,D_n\right\}$ and the chart
 functions. We now consider the matrix
$$ 
F=
\begin{pmatrix}
 \precise{D}_1x^1_\alpha&\cdots&\precise{D}_1x^{N_\alpha}_\alpha\\
 \vdots&\cdots&\vdots\\
 \precise{D}_nx^1_\alpha&\cdots&\precise{D}_nx^{N_\alpha}_\alpha\\
\end{pmatrix}
$$ with entries in $\elleinfty U'.$. As $n>N_\alpha$ there is a measurable
subset $U''\subset U'$ of positive measure on which the rank of $F$ is
$k < n$. Without loss of generality we can assume that the first $k$
rows are linearly independent and the first $k+1$ rows are linearly
dependent. By Lemma \ref{meas:dep} there are $k+1$ functions
$\lambda_i\in\elleinfty U''.$ such that 
\begin{align}
  {\vnorm \lambda_i!}_{\elleinfty A.}&\le1\\
      \sum_{i=1}^{k+1}\lambda_i(x)\precise{D}_i\chartfun(z)&=0\quad\text{for
        a.e.~$z\in U''$ and $j=1,\cdots,N_\alpha$} \\
    \mu\left(\left\{x:\forall i,\lambda_i(x)=0\right\}\right)&=0.
\end{align}
From \eqref{eq:representation} we deduce that 
$$
\sum_{i=1}^{k+1}\lambda_i\precise{D}_i=0
$$ in $\dermod[U'']$ showing that the derivations
$\left\{D_1,\cdots,D_{k+1}\right\}$ are not linearly independent.
\end{proof}
\begin{cor}\label{freemodules2}
Suppose the doubling metric measure space $(X,\metric,\meas)$ has a measurable differentiable
structure, and let $\chart$ be a chart. 
  If the partial derivatives
  $\{\chartder\}_{j=1}^{N_\alpha}\subset\dermod[X_\alpha]$, then
  $\dermod[X_\alpha]$ is free and $\{\chartder\}_{j=1}^{N_\alpha}$ is
  a basis. 
\end{cor}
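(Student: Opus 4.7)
The plan is to verify the two conditions for a basis: that the partial derivatives span $\dermod[X_\alpha]$ and that they are linearly independent over $\elleinfty X_\alpha.$.

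For the spanning property, I would invoke the representation formula from Lemma \ref{representation}. Given any $D\in\dermod[X_\alpha]$ and any $f\in\lipalg X_\alpha.$, that formula gives
\begin{equation*}
  Df = \sum_{j=1}^{N_\alpha}\chartder[f]\, D\chartfun.
\end{equation*}
Since the coefficients $D\chartfun$ lie in $\elleinfty X_\alpha.$ and the identity holds for every Lipschitz $f$, we may read it as the equality of derivations
\begin{equation*}
  D = \sum_{j=1}^{N_\alpha}(D\chartfun)\,\chartder,
\end{equation*}
where $\chartder$ is viewed as an element of $\dermod[X_\alpha]$ by the hypothesis of the corollary. Thus $\{\chartder\}_{j=1}^{N_\alpha}$ generates $\dermod[X_\alpha]$.

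For linear independence, suppose $\sum_{j=1}^{N_\alpha}\lambda_j\,\chartder = 0$ in $\dermod[X_\alpha]$ for some $\lambda_j\in\elleinfty X_\alpha.$. Apply this relation to the chart function $x_\alpha^i$. The key observation is that the uniqueness clause in the definition of a measurable differentiable structure forces $\chartder[x_\alpha^i] = \delta_{ij}$ a.e.: indeed, the choice $\delta_{ij}$ trivially satisfies
\begin{equation*}
  \biglip\Bigl\{x_\alpha^i - \sum_{j=1}^{N_\alpha}\delta_{ij}\,x_\alpha^j\Bigr\}(x) = \biglip 0 \,(x) = 0,
\end{equation*}
and uniqueness of the coefficients then identifies $\chartder[x_\alpha^i]$ with $\delta_{ij}$. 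Substituting this into $\sum_j\lambda_j\chartder[x_\alpha^i] = 0$ yields $\lambda_i = 0$ a.e.\ in $X_\alpha$ for each $i$, which is the desired linear independence.

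There is no real obstacle here once Lemma \ref{representation} is available and the identity $\chartder[x_\alpha^i]=\delta_{ij}$ is extracted from the defining uniqueness property; the mild point to check is simply that applying a sum-of-derivations equality pointwise to a fixed Lipschitz function is legitimate, which holds because each $\chartder$ is assumed to be a derivation and hence an $\elleinfty X_\alpha.$-linear object acting on $\lipalg X_\alpha.$.
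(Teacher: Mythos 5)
Your proposal is correct and is essentially the argument the paper intends (the corollary is stated without proof, as an immediate consequence of Lemma \ref{representation}): the representation formula gives spanning with coefficients $Dx_\alpha^j\in\elleinfty X_\alpha.$, and the uniqueness clause in the definition of a measurable differentiable structure yields $\partial x_\alpha^i/\partial x_\alpha^j=\delta_{ij}$ a.e., which kills any $\elleinfty X_\alpha.$-linear relation upon evaluation at the chart functions. No gaps.
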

 We now give two criteria for the ``partial derivatives'' to be
derivations. 
\begin{lem}
Suppose the doubling metric measure space $(X,\metric,\meas)$ has a measurable differentiable
structure, and let $\chart$ be a chart. If 
the map
$$
J:\lipsob X.\to\ellep X.
$$ is injective, then the maps
  \begin{gather}
    \chartder : \lipalg X_\alpha. \to \elleinfty X_\alpha.\\
    f\mapsto \chartder[f]\\
  \end{gather}
are derivations.
\end{lem}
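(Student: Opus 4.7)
The plan is to verify the three conditions of Definition~\ref{derivationsdef} for $\chartder$ viewed as a map $\lipalg X_\alpha.\to\elleinfty X_\alpha.$: linearity, boundedness, the Leibniz rule, and sequential weak* continuity. The first three come from the formal structure of the chart; injectivity of $J$ (via Proposition~\ref{d:closabilityprop}) enters only in the last step, and that is the crux.

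\textbf{Linearity, boundedness, Leibniz.} Linearity is forced by the uniqueness clause in the definition of the measurable differentiable structure. For boundedness, $\vnorm df!(x)=\biglip f(x)\le\glip f.$, and since $\{dx_\alpha^j\}$ is a measurable basis for the fibres of $\cotbund[X_\alpha]$, after a measurable subdivision of $X_\alpha$ on which the change-of-basis coefficients to a fixed normalization of the fibre norms are bounded, we obtain $\chartder[f]\in\elleinfty X_\alpha.$ with norm $\lesssim\glip f.$. For Leibniz, fix $f,g$ and a chart-a.e.~$x_0$, set
\[
\hat u := \sum_j\chartder[f](x_0)(x_\alpha^j-x_\alpha^j(x_0)), \quad \hat v := \sum_j\chartder[g](x_0)(x_\alpha^j-x_\alpha^j(x_0)),
\]
and $\eta_f:=f-f(x_0)-\hat u$, $\eta_g:=g-g(x_0)-\hat v$. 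All four functions vanish at $x_0$, $\eta_f,\eta_g$ are Lipschitz, and $\biglip\eta_f(x_0)=\biglip\eta_g(x_0)=0$ by the definition of the differentiable structure. Expanding $fg=(f(x_0)+\hat u+\eta_f)(g(x_0)+\hat v+\eta_g)$, the linear part $f(x_0)\hat v+g(x_0)\hat u$ has coefficients $f(x_0)\chartder[g](x_0)+g(x_0)\chartder[f](x_0)$, while every cross term is either a scalar multiple of $\eta_f$ or $\eta_g$ or a product of two Lipschitz functions both vanishing at $x_0$; for the latter $\biglip$ at $x_0$ is zero because the value on $\ball x_0,s.$ is $O(s^2)$. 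Subadditivity of $\biglip$ and uniqueness of the partial derivatives then yield the Leibniz identity.

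\textbf{Weak* continuity.} Suppose $f_n\to f$ weak* in $\lipalg X_\alpha.$, so $\sup_n\glip f_n.<\infty$ and $f_n\to f$ uniformly on bounded sets; reducing to $\meas(X_\alpha)<\infty$ gives $f_n\to f$ strongly in $\ellep X_\alpha.$ for some $p>1$. The sequence $\{\chartder[f_n]\}_n$ is bounded in $\elleinfty X_\alpha.$ by the boundedness step. Pick any subsequence; by Banach--Alaoglu and reflexivity of $\ellep$, it has a further subsequence $\chartder[f_{n_k}]$ converging weakly in $\ellep X_\alpha.$ to some $g_j$, and $g_j\in\elleinfty X_\alpha.$ by lower semicontinuity of the $\elleinfty$-norm. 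Then $df_{n_k}=\sum_j\chartder[f_{n_k}]\,dx_\alpha^j$ converges weakly in the space of $\ellep$-sections of $\cotbund[X_\alpha]$ to $\sum_j g_j\,dx_\alpha^j$. By Proposition~\ref{d:closabilityprop}, injectivity of $J$ makes $d\colon\presob X.\subset\ellep X.\to\ellep\sections.$ closed, so its graph is linear and norm-closed, hence weakly closed by Mazur. The weak limit of $(f_{n_k},df_{n_k})$ therefore lies in the graph: $f\in\lipsob X.$ with $df=\sum_j g_j\,dx_\alpha^j$. Comparing with $df=\sum_j\chartder[f]\,dx_\alpha^j$ and using the uniqueness of basis coefficients forces $g_j=\chartder[f]$. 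Every weak-$\ellep$ accumulation point of the bounded sequence $\{\chartder[f_n]\}$ being $\chartder[f]$, the full sequence converges weakly in $\ellep X_\alpha.$, and the uniform $\elleinfty$-bound upgrades this to weak* convergence in $\elleinfty X_\alpha.$.

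\textbf{Main obstacle.} The heart of the argument is the identification of the weak* limits $g_j$ with $\chartder[f]$: without closability of $d$, distinct subsequences of $\{\chartder[f_n]\}$ could converge to distinct weak* limits consistent with the $\ellep$-convergence of $f_n$, and nothing would force any of them to be $\chartder[f]$. Injectivity of $J$ is exactly what rules this out.
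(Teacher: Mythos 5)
Your argument is correct and is essentially the paper's own: extract a weakly convergent subsequence using the uniform bounds, identify the weak limit of $df_{n_k}$ with $df$ via the injectivity of $J$, and conclude weak* convergence of the full sequence of partial derivatives by the subsequence principle. The only (cosmetic) difference is that you route the identification through closedness of $d$ (Proposition~\ref{d:closabilityprop}) plus Mazur's theorem applied to the graph, whereas the paper uses reflexivity of $\lipsob X_\alpha.$ and the bi-Lipschitz embedding into $\ellep X.\times\ellep\sections.$ directly; your added detail on boundedness and the Leibniz rule fills in steps the paper merely asserts.
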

\begin{proof}
Recall Definition \ref{derivationsdef} where the axioms that
derivations have to satisfy are stated. The partial derivatives
$\chartder$ satisfy
 linearity, boundedness and the
Leibniz rule by definition. We have to
 check weak* continuity. As usual, there is no loss of
  generality in assuming that $X_\alpha$ is bounded and of finite measure. 
 Let  $\{f_k\}\subset\lipalg
  X_\alpha.$ and $f_k\to f$ weak* in $\lipalg X_\alpha.$, that is,
  $f_k\to f$ uniformly with $\glip f_k.$ uniformly bounded. We have to
  show that
$$
\chartder[f_k]\to\chartder[f]
$$ weak* in $\elleinfty X_\alpha.$. We will prove the following
statement: for any subsequence $\{f_{k_l}\}\subset\{f_k\}$ we can pass
to a further subsequence $\{f_{\tilde{k}_l}\}\subset\{f_{k_l}\}$ such
that
$$
\chartder[f_{\tilde{k}_l}]\to\chartder[f]
$$ weak* in $\elleinfty X_\alpha.$. This means that for any $g\in {\rm{
L}}^1(X_\alpha,\meas)$ we have to show that
\begin{equation}\label{eq:62a}
\int_{X_\alpha}\chartder[f_{\tilde{k}_l}]g\,d\mu\to\int_{X_\alpha}\chartder[f]g\,d\mu.
\end{equation} As the sequence $\chartder[f_k]$ is uniformly bounded
in $\elleinfty X_\alpha.$ and as continuous functions are dense in ${\rm
L}^1(X_\alpha,\meas)$, it will suffice to consider $g$ continuous in
\eqref{eq:62a}. We observe that $\{f_{k_l}\}$ is a bounded sequence in
$\lipsob X_\alpha.$ and by reflexivity we can pass to a subsequence
$\{f_{\tilde{k}_l}\}$ such that $f_{\tilde{k}_l}\to h$ weakly in
$\lipsob X_\alpha.$. As $\lipsob X_\alpha.$ bi-Lipschitz embedds in $\ellep X.\times
\ellep \sections.$, we conclude that $h=(f,\gamma)$ and
$df_{\tilde{k}_l}\to \gamma$ weakly in $\ellep\sections.$. This implies
that for every continuous function $g$,
$$
\int_{X_\alpha}\chartder[f_{\tilde{k}_l}]g\,d\mu\to\int_{X_\alpha}\gamma_j
g\,d\mu,$$
but, as $J$ is injective, $\gamma=df$ showing that \eqref{eq:62a} holds.
\end{proof}
\begin{lem}
  Suppose the doubling metric measure space $(X,\metric,\meas)$
  satisfies the hypotheses of Theorem
  \ref{der-finite-dimensionality} (in particular the reverse infinitesimal derivation inequality), and let $\chart$ be a chart. Then the
  maps   \begin{gather}
    \chartder : \lipalg X_\alpha. \to \elleinfty X_\alpha.\\
    f\mapsto \chartder[f]\\
  \end{gather}
are derivations.
\end{lem}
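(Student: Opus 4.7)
By Theorem \ref{der-finite-dimensionality} the space $X$ admits a measurable differentiable structure, so on each chart $\chart$ the partial derivatives $\chartder$ are well-defined $\elleinfty X_\alpha.$ maps. Linearity, boundedness, and the Leibniz rule follow directly from the uniqueness of the partial derivatives in the definition of the measurable differentiable structure. The only nontrivial axiom to verify is weak* sequential continuity. My strategy is to express each partial derivative, locally and with $\elleinfty !$ coefficients, as a linear combination of the derivations $\dervec$; weak* continuity then descends from that of the $D_i$.

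Step 1 (pointwise representation). Apply Lemma \ref{continuity} and Lemma \ref{continuity2} to the finite family $\left\{f,\chartfun[1],\cdots,\chartfun[N_\alpha]\right\}$ to obtain a full measure set $A_f\subset X_\alpha$ such that for every $x\in A_f$ and every $(c_0,\cdots,c_{N_\alpha})\in\real^{N_\alpha+1}$ we have both \eqref{continuityrel2} and the linearity \eqref{linearcontinuity}. Fix $x\in A_f$ and substitute the \emph{numbers} $c_0=-1$, $c_j=\chartder[f](x)$ for $j=1,\cdots,N_\alpha$. Since $\biglip\{f-\sum_j\chartder[f](x)\chartfun\}(x)=0$ by the definition of the measurable differentiable structure, \eqref{continuityrel2} forces $\precise{D_i}(f-\sum_j c_j\chartfun)(x)=0$, and \eqref{linearcontinuity} rewrites this as
\begin{equation*}
\precise{D_i}f(x)=\sum_{j=1}^{N_\alpha}\chartder[f](x)\,\precise{D_i}\chartfun(x),\qquad i=1,\cdots,N.
\end{equation*}

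Step 2 (invertibility of the chart matrix). Let $M(x)$ be the $N\times N_\alpha$ matrix with entries $\precise{D_i}\chartfun(x)$. If $M(x)a=0$ for some $a\in\real^{N_\alpha}$, applying \eqref{continuityrel1} from Lemma \ref{continuity} to $\sum_j a_j\chartfun$ gives $0\ge\lambda(x)\biglip(\sum_j a_j\chartfun)(x)$; since $\lambda$ is nowhere vanishing and the chart functions are independent at a.e.\ point, $a=0$. Hence $M(x)$ has full column rank a.e. By selecting indices $\{i_1,\cdots,i_{N_\alpha}\}$, the standard exhaustion argument (as in the proof of Theorem \ref{freemodules}) produces a measurable partition $X_\alpha=\bigsqcup_\beta U_\beta\sqcup\Omega$ with $\mu(\Omega)=0$ such that on each $U_\beta$ a fixed $N_\alpha\times N_\alpha$ submatrix $M'$ of $M$ is invertible, with $|\det M'|\ge\epsi_\beta>0$; consequently $(M'^{-1})_{jl}\in\elleinfty U_\beta.$.

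Step 3 (expression and weak* continuity). On each $U_\beta$, Step 1 yields
\begin{equation*}
\chartder[f](x)=\sum_{l=1}^{N_\alpha}(M'(x)^{-1})_{jl}\,\precise{D_{i_l}}f(x)\qquad\text{for a.e.\ }x\in U_\beta,
\end{equation*}
with coefficients $(M'^{-1})_{jl}\in\elleinfty U_\beta.$ that do not depend on $f$. Now let $f_k\to f$ in the weak* topology of $\lipalg X_\alpha.$; since each $D_{i_l}$ is a derivation, $D_{i_l}f_k\to D_{i_l}f$ weak* in $\elleinfty X_\alpha.$. For any $\phi\in{\rm L}^1(U_\beta,\meas)$ the function $(M'^{-1})_{jl}\phi$ lies in ${\rm L}^1(U_\beta,\meas)$, so
\begin{equation*}
\int_{U_\beta}\chartder[f_k]\,\phi\,d\mu=\sum_l\int_{U_\beta}D_{i_l}f_k\cdot(M'^{-1})_{jl}\phi\,d\mu\longrightarrow\int_{U_\beta}\chartder[f]\,\phi\,d\mu.
\end{equation*}
Summing over the countable partition gives weak* convergence on $X_\alpha$.

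The main technical obstacle is Step 1: $\chartder[f](x)$ is a function of $x$, whereas Lemma \ref{continuity} handles constants $c_j\in\real$. What makes the substitution legitimate is the quantifier order in Lemma \ref{continuity}/\ref{continuity2} (``for a.e.\ $x$, for every choice of $c_1,\cdots,c_n$''), which is precisely why we had to work with precise representatives. Once this pointwise identity is available, everything else is linear algebra over $\elleinfty !$ together with the known weak* continuity of the fixed derivations $D_i$.
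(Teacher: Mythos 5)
Your proposal is correct and follows essentially the same route as the paper: both arguments use the reverse infinitesimal derivation inequality to show the matrix $\bigl(\precise{D_i}\chartfun\bigr)$ has full column rank a.e., invert an $N_\alpha\times N_\alpha$ minor on a measurable partition with $\elleinfty U_\beta.$ coefficients, and conclude via the representation formula that $\chartder$ is locally an $\elleinfty{}.$-combination of the given derivations, hence weak* sequentially continuous. The only cosmetic differences are that you re-derive the representation formula \eqref{eq:representation} inline (the paper isolates it as Lemma \ref{representation}) and you invert the matrix directly rather than first constructing dual derivations $D'_i$ with $D'_i\chartfun=\delta_i^j$ as in Corollary \ref{orthogonality}.
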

\begin{proof}
  Let us consider the matrix
$$ 
F=
\begin{pmatrix}
 \precise{D}_1x^1_\alpha&\cdots&\precise{D}_1x^{N_\alpha}_\alpha\\
 \vdots&\cdots&\vdots\\
 \precise{D}_Nx^1_\alpha&\cdots&\precise{D}_Nx^{N_\alpha}_\alpha\\
\end{pmatrix}
$$ with entries in $\elleinfty X_\alpha.$. We first show that this
matrix has a.e.~rank $N_\alpha$. Suppose on the contrary that on some
subset $U\subset X_\alpha$ with $\meas(U)>0$ the rank of $F$ is
$k<N_\alpha$. Without loss of generality we can assume that the first
$k$ columns are independent while the first $k+1$ columns are linearly
dependent. By Lemma \ref{meas:dep} there are $k+1$ functions
$\lambda_i\in\elleinfty U.$ such that 
\begin{align}
  {\vnorm \lambda_i!}_{\elleinfty U.}&\le1\\
      \sum_{i=1}^{k+1}\lambda_i(x)\precise{D}_j x^i_\alpha&=0\quad\text{for
        a.e.~$z\in U$ and $j=1,\cdots,N$} \\
    \mu\left(\left\{x:\forall i,\lambda_i(x)=0\right\}\right)&=0.
\end{align}
We choose a subset $U'\subset U$ with $\meas(U'\setminus U)$ and such
that the conclusions of Proposition \ref{lebder} and Lemma
\ref{continuity} hold for the derivations $\{D_1,\cdots,D_N\}$ and the
chart functions $\{x^1_\alpha,\cdots,x^{k+1}_\alpha\}$. For $z\in U'$
application of \eqref{continuityrel1} for 
$$
(c_1,\cdots,c_{k+1})=(\lambda_1(z),\cdots,\lambda_{k+1}(z))
$$ shows that the chart functions
$\{x^1_\alpha,\cdots,x^{k+1}_\alpha\}$ are dependent at $z$, leading
to a contradiction. Therefore, the rank of $F$ is a.e.~$N_\alpha$. So
given
$U\subset X_\alpha$ of positive measure we can find
$V\subset U$ of positive measure and an $N_\alpha\times N_\alpha$
minor of $F$ whose determinant does
not vanish on $V$. Without loss of generality we will assume that
$$
G=\begin{pmatrix}
 \precise{D}_1x^1_\alpha&\cdots&\precise{D}_1x^{N_\alpha}_\alpha\\
 \vdots&\cdots&\vdots\\
 \precise{D}_{N_\alpha}x^1_\alpha&\cdots&\precise{D}_{N_\alpha}x^{N_\alpha}_\alpha\\
\end{pmatrix}
$$ is non singular on $V$. Using an argument similar to that of
Corollary \ref{orthogonality} we can find $V'\subset V$ with
$\meas(V')>0$ and derivations
$\{D'_1,\cdots,D'_{N_\alpha}\}\subset\dermod[V']$ such that
$$
D'_i\chartfun=\delta_i^j.
$$ This shows that the maps
$$
\chi_{V'}\chartder
$$ are derivations (here we use the Representation Formula \eqref{eq:representation}).  Therefore for any $U\subset
X_\alpha$ of positive measure, there is a subset $V\subset U$ of
positive measure such
that the
$$
\chi_{V}\chartder
$$ are derivations. Using an exhaustion argument similar to that in
the proof of Theorem \ref{freemodules} we find a measurable partition
$$
X_\alpha=\bigsqcup_iV_i\sqcup \Omega
$$ with $\meas(\Omega)=0$ and each
$$
\chi_{V_i}\chartder
$$ is a derivation (in $\dermod[X_\alpha]$). Then the
$$
\chartder=\sum_i\chi_{V_i}\chartder
$$ are derivations on the disjoint union $\bigsqcup_iV_i$.
\end{proof} 
 In the next Theorem we prove that chart functions can be chosen
among a generating set for the Lipschitz algebra.
\begin{thm}\label{choice}  Suppose the doubling metric measure space
  $(X,\metric,\meas)$ admits a measurable differentiable structure and that
  for each chart  $\chart$ the partial derivatives are derivations.
  If 
  $\genset$ is a generating set for the Lipschitz algebra $\lipalg X.$,
   the charts can be chosen so that the chart functions belong to
  $\genset$.
\end{thm}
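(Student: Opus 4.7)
The strategy is to show that on each pre-existing chart $(X_\alpha,\chartfuns)$ one can replace the original chart functions by functions taken from $\genset$ on a measurable piece of positive measure, and then exhaust $X_\alpha$ by such pieces.

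Fix a chart $(X_\alpha,\chartfuns)$. By hypothesis the partial derivatives $\{\partial/\partial x_\alpha^j\}_{j=1}^{N_\alpha}$ are derivations, hence by Corollary \ref{freemodules2} they form a linearly independent set in $\dermod[X_\alpha]$. Applying Corollary \ref{orthogonality} (if $\genset$ is finite) or Proposition \ref{ducentoventiduebis} (if $\genset$ is countable, which covers the separable case by Stone--Weierstrass) to this set and to the generating set $\genset$, we obtain $\{g'_1,\dots,g'_{N_\alpha}\}\subset\genset$, a measurable $V\subset X_\alpha$ with $\meas(V)>0$, and $\epsi>0$ such that the matrix
\begin{equation*}
B(x)=\left(\frac{\partial g'_i}{\partial x_\alpha^j}(x)\right)_{i,j=1}^{N_\alpha}
\end{equation*}
satisfies $|\det B(x)|\ge\epsi$ on $V$. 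In particular $B(x)$ is invertible in the matrix ring over $\elleinfty V.$.

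Next I verify that $(V,\{g'_1,\dots,g'_{N_\alpha}\})$ is a differentiable chart. Given a Lipschitz function $f$, let $M(x)=(B(x)^T)^{-1}$ and define
\begin{equation*}
\frac{\partial f}{\partial g'_i}(x)=\sum_{k=1}^{N_\alpha}M_{ik}(x)\frac{\partial f}{\partial x_\alpha^k}(x)\in\elleinfty V..
\end{equation*}
Setting $h_x:=f-\sum_i(\partial f/\partial g'_i)(x)\,g'_i$, linearity of the original partial derivatives and the choice of $M$ yield $\partial h_x/\partial x_\alpha^j(x)=0$ for every $j$. The chart property at $(X_\alpha,\chartfuns)$ then forces $\biglip h_x(x)=0$ for a.e.~$x\in V$, which is exactly the required condition. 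For uniqueness, if $c,c'\in\real^{N_\alpha}$ both produce $\biglip\{f-\sum_i c_i g'_i\}(x)=0$, the seminorm property of $\biglip$ gives $\biglip\{\sum_i(c_i-c'_i)g'_i\}(x)=0$; applying the derivations $\partial/\partial x_\alpha^j$ (which is legal by hypothesis) and using the localized derivation inequality \eqref{rev-der-ineq} gives $B(x)^T(c-c')=0$, and invertibility of $B(x)$ forces $c=c'$.

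Finally I invoke the same exhaustion argument used in the proof of Theorem \ref{freemodules}: pick $V_1\subset X_\alpha$ as above with $\meas(V_1)>\tfrac{2}{3}\sup\{\meas(V):V\text{ admits a chart from }\genset\text{ of the required form}\}$, then repeat on $X_\alpha\setminus V_1$, etc. At each step the derivations $\{\partial/\partial x_\alpha^j\}$ restricted to the remaining set are still $N_\alpha$ linearly independent derivations (since the basis property from Corollary \ref{freemodules2} localizes), so the construction can be iterated and yields a new candidate $V_i$ of positive measure whenever the remaining set has positive measure. The $\tfrac{2}{3}$-factor trick shows the leftover is null. Running this over every chart $\alpha$ of the original structure produces the refined differentiable structure whose chart functions all lie in $\genset$.

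The main obstacle is the verification in the second paragraph: one must see that cancelling the partials with respect to the \emph{old} chart functions is strong enough to cancel $\biglip$ as well. This uses crucially the hypothesis that the $\partial/\partial x_\alpha^j$ are derivations (so that they control $\biglip$ via \eqref{rev-der-ineq}) together with the invertibility of the coefficient matrix $B$ on $V$; once these are in hand, both existence and uniqueness of the new partial derivatives reduce to linear algebra over $\elleinfty V.$.
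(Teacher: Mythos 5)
Your proposal is correct and follows essentially the same route as the paper: Corollary \ref{freemodules2} to see the partial derivatives form a basis, Corollary \ref{orthogonality} (or Proposition \ref{ducentoventiduebis} in the countable case) to extract $\{g'_i\}\subset\genset$ and an invertible coefficient matrix on a positive-measure set $V$, and the exhaustion argument of Theorem \ref{freemodules} to cover $X_\alpha$; the only cosmetic difference is that you work with $B$ and $(B^T)^{-1}$ directly where the paper passes to the dual derivations $D'_i$ with $D'_ig'_j=\delta_{i,j}$, and you make the existence half of the new chart property explicit where the paper leaves it implicit. The one step you state too casually is evaluating $\partial h_x/\partial x_\alpha^j$ and the chart property of $(X_\alpha,\chartfuns)$ at a single point $x$ with coefficients $c_i(x)$ depending on that point: the exceptional null set in the chart definition depends on the (frozen) coefficient vector, so you need the countable-dense-set/precise-representative argument of Lemma \ref{continuity2} and Proposition \ref{lebder} to get one full-measure set working for all $c\in\real^{N_\alpha}$ simultaneously, exactly as the paper does.
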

\begin{proof}
  From Corollary \ref{freemodules2} we know that
  $\left\{\chartder\right\}$ is a basis for $\dermod[X_\alpha]$. Given
  any $U\subset X_\alpha$ with $\meas(U)>0$ we can apply Corollary
  \ref{orthogonality} to find $V\subset U$ with $\meas(V)>0$,
  functions $\left\{g'_1,\cdots,g'_{N_\alpha}\right\}\subset \genset$
  and derivations
  $\left\{D'_1,\cdots,D'_{N_\alpha}\right\}\subset\dermod[V]$ such
  that
  $$
  D'_ig'_j=\delta_{i,j}.
$$ Applying Lemma \ref{continuity2} to
$\left\{D'_1,\cdots,D'_{N_\alpha}\right\}$ and
$\left\{g'_1,\cdots,g'_{N_\alpha}\right\}$ we find $V'\subset V$ with
$\meas(V\setminus V')=0$ and for each $z\in V'$,
\begin{equation}
\begin{split}
\max_{i=1,\cdots,N_\alpha}|c_i|&=\max_{j=1,\cdots,N_\alpha}\left|
  \precise{D'}_j\left(\sum_{i=1}^{N_\alpha}c_ig'_i\right)(z)\right|\\
&\le\max_{j=1,\cdots,N_\alpha}\vnorm D'_j!\biglip\left(\sum_{i=1}^{N_\alpha}c_ig'_i\right)(z)
\end{split}
\end{equation}
which shows that the functions
$\left\{g'_1,\cdots,g'_{N_\alpha}\right\}$ are a.e.~independent on
$V$, so $(V,\left\{g'_{i}\right\}_{i=1}^{N_\alpha})$ is a chart. Using and exhaustion argument similar to that in
the proof of Theorem \ref{freemodules} we can ``cover'' (up to a
subset of measure $0$) $X_\alpha$ by
measurable charts such that the chart functions are among the $\genset$.
\end{proof}
\begin{cor}
Under the hypotheses of Theorem \ref{choice} the chart functions can
be chosen among distance functions from points.  
\end{cor}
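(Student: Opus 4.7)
The plan is to exhibit a countable generating set for $\lipalg X.$ consisting of truncated distance functions, apply Theorem \ref{choice}, and then transfer the result from truncated distances to actual distance functions by refining each chart into regions where the truncation is locally inactive.

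First, since $X$ is doubling it is separable, so I fix a countable dense sequence $\{p_k\}_{k=1}^\infty\subset X$. For each $k,n\in\mathbb{N}$ set
\begin{equation*}
  g_{k,n}(x)=\min\bigl(\dist x,p_k.,n\bigr),
\end{equation*}
which is a bounded $1$-Lipschitz function, hence lies in $\lipalg X.$. The family $\{g_{k,n}\}_{k,n\in\mathbb{N}}$ generates $\lipalg X.$: the unital subalgebra it generates separates points, because for $x\ne y$ one can pick $p_k$ with $\dist x,p_k.<\tfrac13\dist x,y.$ and $n$ larger than $\dist y,p_k.$, whence $g_{k,n}(x)\ne g_{k,n}(y)$. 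By the Stone-Weierstra\ss\ Theorem \ref{stone-weierstrass} this subalgebra is weak* dense in $\lipalg X.$, so $\{g_{k,n}\}$ is a generating set in the sense required by Theorem \ref{choice}.

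Applying Theorem \ref{choice} with this generating set yields charts of the form $(X_\alpha,\{g_{k_j,n_j}\}_{j=1}^{N_\alpha})$. For each such chart, I then refine by partitioning $X_\alpha$ according to the sizes of the $\dist \cdot,p_{k_j}.$: for each tuple of integers $(m_1,\dots,m_{N_\alpha})$ with $m_j\ge n_j$, consider
\begin{equation*}
  X_\alpha^{(m_1,\dots,m_{N_\alpha})}=X_\alpha\cap\bigcap_{j=1}^{N_\alpha}\bigl\{x:\dist x,p_{k_j}.<m_j\bigr\}.
\end{equation*}
Since $\dist x,p_{k_j}.<\infty$ for every $x$, these sets (made disjoint in a standard way) cover $X_\alpha$. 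On such a piece, for every $x$ there is a neighbourhood $\ball x,\delta.$ (with $\delta<\min_j(m_j-\dist x,p_{k_j}.)$) on which $g_{k_j,m_j}$ coincides with $\dist \cdot,p_{k_j}.$. Since $\biglip$ depends only on the values of a function in any small neighbourhood, the chart axiom is unchanged if we replace $g_{k_j,m_j}$ by $\dist \cdot,p_{k_j}.$. Hence each piece becomes a chart whose chart functions are genuine distance functions from points in $X$.

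The only delicate point I foresee is verifying that the refinement loses no positive-measure set: at any $x\in X_\alpha$ where $\dist x,p_{k_j}.>n_j$ strictly, the function $g_{k_j,n_j}$ is locally constant so $\biglip g_{k_j,n_j}(x)=0$, which is incompatible with the independence of the chart functions at $x$ required by the definition of a measurable differentiable structure; such $x$ therefore lie in a null set. The residual set $\{x\in X_\alpha:\dist x,p_{k_j}.=n_j\text{ for some }j\}$ is handled by passing to the larger truncation parameter $m_j>n_j$, which agrees with $\dist \cdot,p_{k_j}.$ in a neighbourhood of every such $x$, so it is absorbed into one of the refinement pieces above.
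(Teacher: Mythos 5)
Your overall route is the same as the paper's: exhibit a generating set for $\lipalg X.$ consisting of distance(-like) functions and invoke Theorem \ref{choice}. The paper's own proof is a single sentence because, under the standing assumption made in the Appendix where Theorem \ref{stone-weierstrass} is stated (namely that $X$ is bounded), the untruncated functions $\metric(\cdot,p)$ already belong to $\lipalg X.$ and generate it; no truncation, and hence no chart refinement, is needed. Your truncation layer is aimed at unbounded $X$, which the paper does not attempt to treat, and it introduces two genuine gaps.

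First, Theorem \ref{stone-weierstrass} does not ask that the subalgebra merely separate points: it requires \emph{uniform} separation, i.e.\ a single constant $M$ such that for every pair $x_1,x_2$ some $f$ in the (weak* closed) subalgebra satisfies $|f(x_1)-f(x_2)|=\metric(x_1,x_2)$ \emph{exactly} with $\lipnorm f!\le M$. For Lipschitz algebras plain separation is not enough (on a snowflaked interval the subalgebra generated by the identity separates points but is far from weak* dense), so your check that $g_{k,n}(x)\ne g_{k,n}(y)$ does not justify the generation claim; your choice of $p_k$ only yields $|g_{k,n}(x)-g_{k,n}(y)|\ge\tfrac13\metric(x,y)$. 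The repair is to observe that the weak* closure of your subalgebra contains $\min(\metric(\cdot,x_2),n)$ itself (as a pointwise, hence weak*, limit along $p_k\to x_2$ with Lipschitz constants bounded by $1$), which does achieve equality — and for bounded $X$ one then has the uniform bound $M=1\vee\operatorname{diam}(X)$. Second, your handling of the level set $\{x:\metric(x,p_{k_j})=n_j\}$ does not work as written: the chart function delivered by Theorem \ref{choice} is $g_{k_j,n_j}$, not $g_{k_j,m_j}$, and near a point of that level set $g_{k_j,n_j}$ genuinely differs from $\metric(\cdot,p_{k_j})$ on the side where the distance exceeds $n_j$, so ``passing to a larger truncation parameter'' silently replaces the function whose chart property you actually know. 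The correct tool is the Corollary following Proposition \ref{local-density}: $g_{k_j,n_j}$ and $\metric(\cdot,p_{k_j})$ agree on $A_j=\{\metric(\cdot,p_{k_j})\le n_j\}$, almost every $x\in X_\alpha$ lies in $\bigcap_j A_j$ (by your independence observation) and is a density point of it, so $\biglip$ of their difference vanishes there; combined with the seminorm property of $\biglip$ this lets you replace $g_{k_j,n_j}$ by $\metric(\cdot,p_{k_j})$ in the chart axiom a.e.\ without any refinement of the chart.
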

\begin{proof}
As a consequence of the Stone-Weierstra\ss\ (Theorem \ref{stone-weierstrass}) Theorem,
the distance functions from points are a generating set for $\lipalg X.$.
\end{proof}
\section{Appendix}
\begin{defn}[pointed Lipschitz Algebra]
  Let $(X,\metric,x_0)$ be a pointed metric space, i.e.~a metric space
  with a basepoint $x_0$. We denote the collection of real-valued Lipschitz
  functions on $(X,\metric)$ which vanish at $x_0$ by $\plipalg
  X.$. The set $\plipalg X.$ is a real algebra where multiplication is
  defined as follows: if $f,g\in\plipalg X.$,
  \begin{equation}
    (fg)(x)=(f(x))(g(x)).
  \end{equation}
  For $f\in\plipalg X.$ we define the norm
  \begin{equation}
    \plipnorm f!=\glip f. .
  \end{equation}
This gives $(\plipalg X.,\plipnorm\cdot!)$ the structure of a Banach
algebra \cite[sec.~4.1]{weaver_book99}.
\end{defn}
A reference for the following Theorem is
\cite[sec.~2.2]{weaver_book99}.
\begin{thm}\label{arenseells}
  The Banach space $\plipalg X.$ has a predual, $\arenseels X.$ which
  is separable if $X$ is separable. The space $\arenseels X.$ is
  called the Arens-Eells space and does not depend on the choice of
  the basepoint $x_0$.
\end{thm}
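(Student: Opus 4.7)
The plan is to construct $\arenseels X.$ explicitly as the completion of a space of ``molecules'' and then to read off each assertion in turn. Let $V(X)$ be the real vector space of finitely supported signed measures on $X$ with total mass zero, or equivalently the linear span of the elementary molecules $m_{xy}=\delta_x-\delta_y$ for $x,y\in X$. I would define on $V(X)$ the seminorm
\begin{equation*}
  \|m\|_{AE}=\inf\left\{\sum_i |a_i|\,\dist x_i,y_i. : m=\sum_i a_i\, m_{x_iy_i}\right\},
\end{equation*}
the infimum ranging over all finite molecular representations, and then let $\arenseels X.$ be the completion of $V(X)$ under $\|\cdot\|_{AE}$ once the seminorm has been shown to separate points.

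The central step is the isometric identification $\dual{\arenseels X.}=\plipalg X.$, carried out through the natural pairing
\begin{equation*}
  \langle f,m_{xy}\rangle=f(x)-f(y),
\end{equation*}
extended linearly in the second variable. The Lipschitz inequality gives at once $|\langle f,m\rangle|\le\glip f.\,\|m\|_{AE}$, so each $f\in\plipalg X.$ yields a bounded functional on $\arenseels X.$ of norm at most $\plipnorm f!$. Conversely, given $\varphi\in\dual{\arenseels X.}$ I would set $f_\varphi(x)=\varphi(m_{x,x_0})$; the identity $m_{x,y}=m_{x,x_0}-m_{y,x_0}$ yields $|f_\varphi(x)-f_\varphi(y)|\le\|\varphi\|\dist x,y.$, and $f_\varphi(x_0)=0$ by construction, so $f_\varphi\in\plipalg X.$. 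Testing $f_\varphi$ against an arbitrary molecule recovers $\varphi$, and choosing near-optimal molecular representations shows the two norms agree. This simultaneously forces $\|\cdot\|_{AE}$ to be a genuine norm and produces the predual.

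For separability, if $D\subset X$ is countable and dense, I would show that the $\mathbb{Q}$-linear span of $\{m_{xy}:x,y\in D\}$ is dense in $V(X)$, hence in $\arenseels X.$. The estimate
\begin{equation*}
  \|m_{x,y}-m_{x',y'}\|_{AE}\le\dist x,x'.+\dist y,y'.,
\end{equation*}
obtained by rewriting the difference as $m_{x,x'}+m_{y',y}$, reduces endpoint approximation in $\|\cdot\|_{AE}$ to approximation in the metric. Basepoint independence follows by inspection: neither $V(X)$ nor $\|\cdot\|_{AE}$ involves any distinguished point of $X$, so changing $x_0$ affects only the reconstruction map $\varphi\mapsto f_\varphi$ (which must be adjusted by an additive constant so that the new $f_\varphi$ vanishes at the new basepoint), not the space $\arenseels X.$ itself.

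The main obstacle I anticipate is an apparent circularity: verifying that $\|\cdot\|_{AE}$ is a norm uses the existence of separating Lipschitz functionals, while constructing the duality seems to presuppose the norm property. The cleanest way out is to work throughout with the seminormed space $V(X)$, observe that the pairing with $\plipalg X.$ is automatically well defined on $V(X)$, and obtain separation of points as a byproduct; alternatively, one can verify separation directly by exhibiting, for any nonzero $m=\sum a_i m_{x_iy_i}$, a particular Lipschitz function such as $\dist\cdot,z.$ for suitable $z$ that pairs nontrivially with $m$.
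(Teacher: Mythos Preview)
The paper does not actually prove this theorem: immediately before the statement it simply writes ``A reference for the following Theorem is \cite[sec.~2.2]{weaver_book99}'' and moves on, using the result as a black box in the proof of Theorem~\ref{arenseellsbounded}. So there is no ``paper's own proof'' to compare against.

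That said, your proposal is correct and is precisely the standard construction of the Arens--Eells space via molecules that one finds in Weaver's book. The duality argument, the separability argument via approximating endpoints from a countable dense set, and the observation that neither $V(X)$ nor $\|\cdot\|_{AE}$ refers to the basepoint are all sound. Your handling of the apparent circularity is also the right one: define the pairing on the seminormed space $V(X)$ first, and deduce that $\|\cdot\|_{AE}$ is a norm from the fact that $\plipalg X.$ separates the points of $V(X)$. One small caveat on your alternative suggestion at the very end: a single distance function $\dist\cdot,z.$ need not pair nontrivially with every nonzero molecule (think of a molecule supported on three collinear points in $\real$), so if you want a direct separation argument you should instead, given $m=\sum_i a_i\delta_{x_i}$ with some $a_{i_0}\ne 0$, use a Lipschitz bump supported near $x_{i_0}$ and vanishing at the other support points. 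The cleaner route is the one you outline first.
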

We can now establish the following:
\begin{thm}\label{arenseellsbounded}
  The Banach space $\lipalg X.$ has a predual which is separable if
  $X$ is separable.
\end{thm}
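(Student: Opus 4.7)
The plan is to reduce the statement to Theorem~\ref{arenseells} by exhibiting $\lipalg X.$ as a pointed Lipschitz algebra $\plipalg Y.$ on a suitably chosen auxiliary pointed metric space $(Y,d,e)$. The idea is that the norm $\lipnorm f! = \supnorm f! \vee \glip f.$ differs from the pure Lipschitz-constant norm on $\plipalg Y.$ only because of the $\supnorm f!$ term, and adjoining a single basepoint at ``distance $1$'' from every point of $X$ converts the sup-norm contribution into a Lipschitz contribution relative to that basepoint.

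Concretely, I would set $Y = X \sqcup \{e\}$ with the metric
\begin{equation*}
 d(x,y) = \metric(x,y) \wedge 2 \quad (x,y \in X),\qquad d(x,e) = d(e,x) = 1 \quad (x\in X).
\end{equation*}
The first step is to verify that $d$ is actually a metric on $Y$: symmetry and positivity are clear, and the triangle inequality reduces to the two elementary inequalities $d(x,y)\le 2 = d(x,e)+d(e,y)$ and $1 = d(x,e) \le d(x,y) + d(y,e)$, both immediate from the definition.

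Next I would check the isometric identification $\plipalg Y. \cong \lipalg X.$. Given $f\in\plipalg Y.$ with $\plipnorm f! = \glip f._{Y} \le L$, its restriction to $X$ satisfies $|f(x)-f(y)| \le L(\metric(x,y)\wedge 2)$, hence $\glip f._X \le L$, and $|f(x)| = |f(x)-f(e)| \le L\cdot 1 = L$, so $\supnorm f! \le L$; thus $\lipnorm{f|_X}! \le L$. Conversely, given $f\in\lipalg X.$ with $\lipnorm f!\le L$, extend by $f(e)=0$; then on pairs $x,y\in X$ one has $|f(x)-f(y)| \le \min(\glip f.\metric(x,y),\ 2\supnorm f!) \le L(\metric(x,y)\wedge 2) = L\,d(x,y)$, while $|f(x)-f(e)| = |f(x)| \le L = L\,d(x,e)$, so $\glip f._Y \le L$. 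This shows the extension-restriction maps are inverse linear isometries between $\plipalg Y.$ and $\lipalg X.$.

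Finally, Theorem~\ref{arenseells} provides a predual $\arenseels Y.$ of $\plipalg Y.$, which is separable whenever $Y$ is separable; transporting along the isometric isomorphism yields a predual of $\lipalg X.$. Since adjoining the single point $e$ preserves separability, the separable case is immediate once the identification is in place. The only nontrivial point of the argument is the metric construction; once that is done, no genuine obstacle remains.
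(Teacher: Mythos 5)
Your argument is correct, and it takes a genuinely different route from the paper. You realize $\lipalg X.$ as an honest pointed Lipschitz algebra by adjoining a basepoint $e$ at distance $1$ from every point of $X$ with the metric truncated at $2$, and then quote Theorem \ref{arenseells} directly; the computations you give (the triangle inequality for $d$, and the two norm estimates showing that restriction and extension-by-zero are mutually inverse linear isometries between ${\rm Lip}_0(Y,e)$ and $\lipalg X.$) are exactly what is needed, and the truncation at $2$ is the right device to convert the bound $|f(x)-f(y)|\le 2\supnorm f!$ into a Lipschitz bound with respect to $d$. One point worth making explicit: since $Y$ has diameter at most $2$, every element of ${\rm Lip}_0(Y,e)$ is automatically bounded, so the restriction map really lands in $\lipalg X.$ and the correspondence is onto. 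The paper instead embeds $\lipalg X.$ isometrically into $\plipalg X.\oplus_\infty l^\infty(X)$ via $f\mapsto(f-f(x_0),f)$, identifies the target as the dual of ${\rm AE}[X]\oplus_1 l^1(X)$, and obtains a predual as a quotient by the preannihilator of the image. Your approach buys two things. First, it produces a concrete predual, namely the Arens-Eells space of the augmented space $Y$, rather than an abstract quotient, and it immediately transports the weak* convergence criterion of Proposition \ref{weak*topology} to $\lipalg X.$. Second, it sidesteps a delicate point in the paper's route: an isometric copy of a Banach space inside a dual $\dual Y.$ is \emph{not} in general itself a dual space (consider $c_0\subset\ell^\infty$); the identification with $\dual(Y/Z^\perp).$ requires the image to be weak* closed, which the paper's proof does not verify for $\Phi(\lipalg X.)$. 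Your argument needs no such verification, since you exhibit a surjective isometry onto a space already known to be a dual.
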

\begin{proof}
  Let $x_0\in X$ and define
  \begin{align}
    \Phi:\lipalg X.&\to\plipalg X.\oplus_\infty l^\infty(X)\\
    f&\mapsto(f-f(x_0),f);
  \end{align}
where we now explain the notation. If $X_1,X_2$ are normed vector
spaces, we denote by $X_1\oplus_\infty X_2$ the normed vector spaces
$X_1\times X_2$ with the norm
\begin{equation}
  \vnorm (x_1,x_2)!_{X_1\oplus_\infty X_2}=\vnorm x_1!_{X_1}\vee\vnorm x_2!_{X_2}.
\end{equation}
This construction is easily generalized for $p\in[1,\infty)$: if $X_1,X_2$
are normed vector space, we denote by $X_1\oplus_p X_2$ the
normed vector space $X_1\times X_2$ with the norm
\begin{equation}
  \vnorm (x_1,x_2)!_{X_1\oplus_p X_2}=\left[\vnorm x_1!_{X_1}^p+\vnorm x_2!_{X_2}^p\right]^{1/p}.
\end{equation} We will denote the dual of a Banach space $Y$ by $\dual Y.$.
It is a well-known fact that
\begin{equation}
\dual  \left(X_1\oplus_p X_2\right). = \dual X_1.\oplus_q \dual X_2.,
\end{equation}
where $q$ is the conjugate exponent of $p$. The set $l^\infty(X)$
denotes the collection of bounded functions of $X$. It is a Banach
space with the sup-norm. The set $l^1(X)$ denotes the collection of
functions $f$ on $X$ for which
\begin{equation}
  \vnorm f!_{l^1(X)}=\sum_{x\in X}|f(x)|<\infty;
\end{equation}
$(l^1(X),\vnorm \cdot!_{l^1(X)})$ is a Banach space and $\dual
l^1(X).=l^\infty(X)$. Furthermore, $l^1(X)$ is separable if $X$ is
separable. We know from Theorem \ref{arenseells} that
\begin{equation}
\dual{\arenseels X.}. = \plipalg X.,
\end{equation}
therefore
\begin{equation}
  \plipalg X.\oplus_\infty l^\infty(X) = \dual\left({\arenseels X.\oplus_1 l^1(X)}\right)..
\end{equation}
From the definition of $\lipnorm\cdot!$ it follows that $\Phi$ is an
isometric embedding. It is a well-know Banach space fact that if a
Banach space $Z$ isometrically embedds in $\dual Y.$, then
$Z$ is isometric to
\begin{equation}
  \dual(Y/Z^\perp).,
\end{equation}
where $Z^\perp$ is the preannihilator of $Z$ in $Y$:
\begin{equation}
  Z^\perp=\left\{y\in Y:\forall z\in Z, z(y)=0\right\}.
\end{equation}
Therefore, there is a quotient of $\arenseels X.\oplus_1 l^1(X)$ which
is a predual of $\lipalg X.$. Moreover, if $X$ is separable this
predual is separable.
\end{proof}
Note that for ``interesting spaces'' $\lipalg X.$ is usually {\bf
  neither separable nor reflexive}. The typical example is $\lipalg
[0,1].$. The pointed Lipschitz algebra ${{\rm Lip}_0([0,1],0)}$
isometrically embedds in $\lipalg [0,1].$. The derivative map
\begin{align}
  D: {{\rm Lip}_0([0,1],0)} &\to {{\rm L}^\infty([0,1],{\rm Leb})} \\
  f&\mapsto f'\\
\end{align} 
is an isometry with inverse the indefinite integral:
\begin{align}
  I: {{\rm L}^\infty([0,1],{\rm Leb})} &\to {{\rm Lip}_0([0,1],0)} \\
  f'&\mapsto \int_0^x f'.\\
\end{align}
A Banach space which is a dual space has, in general, not a unique
predual, but this is true for $\lipalg X.$. This follows from the fact
that $\lipalg X.$ is a Banach algebra. Therefore, we can say that the
Arens-Eells space $\arenseels X.$ is {\bf the} predual of $\lipalg X.$
and on $\lipalg X.$ we can consider {\bf the} weak* topology. If a
Banach space $Y$ is separable, the weak* topology on $\dual Y.$ is
metrizable on each ball \cite[Chapter 3]{brezis_fun}. It is therefore
useful to know when $f_n\to f$ weak* in $\lipalg X.$. A reference for
the following Proposition is \cite[sec.~2.2]{weaver_book99}:
\begin{prop}\label{weak*topology}
  A sequence $\{f_n\}\subset\plipalg X.$ converges to $f\in\plipalg X.$ with
  respect to the weak* topology if and only if it converges to $f$ pointwise.
\end{prop}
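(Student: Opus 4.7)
The plan is to exploit the explicit predual description of $\plipalg X.$ given by the Arens-Eells space $\arenseels X.$ (Theorem \ref{arenseells}). Recall that $\arenseels X.$ is the completion of the space of finitely supported, zero-sum measures on $X$ under the Arens-Eells norm, and that the dual pairing is
\[
  \langle f, \textstyle\sum_i a_i\delta_{x_i}\rangle = \sum_i a_i f(x_i)
\]
for $f\in\plipalg X.$. The key elementary objects in $\arenseels X.$ are the molecules $\delta_x-\delta_{x_0}$, whose Arens-Eells norm equals $\metric(x,x_0)$, and finite linear combinations of these molecules are dense in $\arenseels X.$.

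For the forward implication, I would simply test weak* convergence against the elementary molecule $\delta_x-\delta_{x_0}\in\arenseels X.$: the pairing evaluates as $\langle f_n,\delta_x-\delta_{x_0}\rangle=f_n(x)-f_n(x_0)=f_n(x)$, so weak* convergence forces $f_n(x)\to f(x)$ for each $x\in X$, which is pointwise convergence.

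For the backward implication, I would proceed in two steps. First, I would reduce to the case where the sequence is norm-bounded, i.e.\ $\sup_n\glip{f_n}.<\infty$ (this is the implicit hypothesis of the proposition, consistent with the characterization recalled in the paper after the definition of generating sets; norm-boundedness is also automatic once weak* convergence is established, by uniform boundedness). Granting this bound, linearity of the pairing together with pointwise convergence yield $\langle f_n,m\rangle\to\langle f,m\rangle$ for every molecule $m=\sum_{i=1}^k a_i(\delta_{x_i}-\delta_{x_0})$, since this pairing is just the finite sum $\sum_i a_i f_n(x_i)$. Then an $\varepsilon/3$ argument, using the density of molecules in $\arenseels X.$ and the uniform norm bound $\|f_n\|_{\plipalg X.},\|f\|_{\plipalg X.}\le C$, extends the convergence to all of $\arenseels X.$.

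The main obstacle is the subtle role of the norm bound: a pointwise convergent sequence of Lipschitz functions need not have bounded Lipschitz constants in general (consider tent functions of height $1$ and width $2/n$ at the origin), so a naive reading of the equivalence fails without this control. The cleanest way around this is to work on norm-bounded subsets of $\plipalg X.$, where the weak* topology is metrizable when $X$ is separable (since then $\arenseels X.$ is separable by Theorem \ref{arenseells}), and to quote uniform boundedness to handle the abstract sequence statement. This is the standard reading behind Weaver's argument in \cite[sec.~2.2]{weaver_book99}.
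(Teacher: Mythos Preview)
The paper does not actually prove this proposition: it simply records the statement and refers the reader to \cite[sec.~2.2]{weaver_book99}. So there is no in-paper argument to compare against.

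Your sketch is the standard Arens--Eells argument and is correct in substance. Testing against the elementary molecules $\delta_x-\delta_{x_0}$ gives the forward implication immediately, and the density-plus-$\varepsilon/3$ argument handles the converse once a uniform norm bound is in place. You are also right to flag the norm-boundedness issue: as literally stated the backward implication is false (your tent example, or $f_n(x)=n^{-1}\sin(n^2x)$ on $[0,1]$ with basepoint $0$, converges pointwise to $0\in\plipalg X.$ but has unbounded Lipschitz constant and hence cannot converge weak* by the uniform boundedness principle). The paper's own informal description earlier in the text (``$f_n\to f$ in the weak* topology if and only if $f_n\to f$ uniformly on bounded subsets and $\sup_n\glip f_n.<\infty$'') makes the intended reading clear, so your interpretation---that the norm bound is implicit---is the correct one and matches Weaver's treatment.
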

Note that when bounded subsets of $X$ are relatively compact, e.g.~in
the case of a doubling metric measure space, then we have that $f_n\to
f$ uniformly on bounded subsets because $\sup_n\glip f_n. <\infty$.
From Proposition \ref{weak*topology} we get
\begin{prop}
  A sequence $\{f_n\}\subset\lipalg X.$ converges to $f\in\lipalg X.$ with
  respect to the weak* topology if and only if it converges to $f$ pointwise.
\end{prop}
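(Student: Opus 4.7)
The plan is to reduce this statement to Proposition \ref{weak*topology} for the pointed Lipschitz algebra $\plipalg X.$ by pushing everything through the isometric embedding
\[
\Phi:\lipalg X.\to \plipalg X.\oplus_\infty l^\infty(X),\qquad f\mapsto (f-f(x_0),f),
\]
introduced in the proof of Theorem \ref{arenseellsbounded}. Recall that the predual of $\lipalg X.$ was built there, via the general fact about isometric embeddings into duals, as a quotient of $\arenseels X.\oplus_1 l^1(X)$ by the preannihilator $\Phi(\lipalg X.)^\perp$. The first key point I would establish is that this identification endows $\lipalg X.$ with precisely the restriction along $\Phi$ of the product weak* topology on $\plipalg X.\oplus_\infty l^\infty(X)$. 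This follows from the standard fact that if $Z$ is an isometric subspace of a dual $\dual Y.$, then the weak* topology on $Z$ coming from the predual $Y/Z^\perp$ coincides with the subspace topology induced by the weak* topology of $\dual Y.$.

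Granted this identification, $f_n\to f$ weak* in $\lipalg X.$ is equivalent to $\Phi(f_n)=(f_n-f_n(x_0),f_n)$ converging weak* to $\Phi(f)=(f-f(x_0),f)$, which in turn is equivalent to weak* convergence in each of the two factors. For the first factor $\plipalg X.$, Proposition \ref{weak*topology} identifies weak* convergence with pointwise convergence. For the second factor $l^\infty(X)=\dual{l^1(X)}.$, the Dirac masses $\delta_x$ lie in $l^1(X)$, so weak* convergence implies pointwise convergence, and conversely a norm-bounded pointwise convergent sequence in $l^\infty(X)$ converges weak* by dominated convergence against an arbitrary $\ell\in l^1(X)$.

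Assembling the pieces, the forward direction (weak* $\Rightarrow$ pointwise) is immediate: weak* convergence in the second coordinate already yields $f_n(x)\to f(x)$ for every $x\in X$. For the reverse direction, pointwise convergence of $f_n$ to $f$ gives pointwise convergence of $f_n-f_n(x_0)$ to $f-f(x_0)$ in $\plipalg X.$, which by Proposition \ref{weak*topology} is weak* convergence in the first factor; combined with the norm-boundedness of $\{f_n\}$ in $\lipalg X.$ (which is automatic for weak*-convergent sequences by Banach--Steinhaus, and which is the implicit sequential reading of the proposition) one obtains weak* convergence in $l^\infty(X)$, hence in the product.

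I expect the main obstacle to be the clean identification of the weak* topology on $\lipalg X.$ coming from the abstract quotient predual of Theorem \ref{arenseellsbounded} with the restriction of the ambient product weak* topology along $\Phi$. Once that bookkeeping is in hand, the rest of the argument reduces to Proposition \ref{weak*topology} in the pointed factor and an elementary $l^1$--$l^\infty$ duality argument in the $l^\infty(X)$ factor.
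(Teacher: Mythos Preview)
Your proposal is correct and follows essentially the same route as the paper: the paper's proof also pushes the question through the isometric embedding $\Phi$ from Theorem~\ref{arenseellsbounded}, identifies the weak* topology on $\lipalg X.$ with the relative topology induced from $\plipalg X.\oplus_\infty l^\infty(X)$, and then invokes Proposition~\ref{weak*topology}. You have simply filled in more of the details (the $l^1$--$l^\infty$ duality in the second factor and the Banach--Steinhaus point about boundedness) that the paper leaves implicit.
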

\begin{proof}
  This follows from the proof of Theorem \ref{arenseellsbounded}. As
  $\Phi$ is an isometric embedding of $\lipalg X.$ into $\plipalg
  X.\oplus_\infty l^\infty(X)$, the weak* topology on $\lipalg X.$ is
  the relative topology induced by the weak* topology on $\plipalg
  X.\oplus_\infty l^\infty(X)$. Therefore, the result follows from
  Proposition \ref{weak*topology}.
\end{proof}
If one compares the definition of derivations \ref{derivationsdef}
with the original definition introduced by Weaver in \cite{weaver00},
the third condition, i.e.~that derivations preserve weak* converge for
sequences, is replaced by the weak* continuity of derivations. This
means that
\begin{equation}
  D:\lipalg X.\to\elleinfty X.
\end{equation}
is continuous when we consider on both $\lipalg X.$ and $\elleinfty
X.$ the weak* topology (we take ${\rm
L}^1(X,\meas)$ as the predual of $\elleinfty X.$). In the cases we are
studying $X$ is separable and the two conditions agree. This follows
from the following Proposition:
\begin{prop}
  Let $Y$ be a separable Banach space and let
  \begin{equation}
    T: \dual Y.\to \dual Z.
  \end{equation} be a linear map. Then $T$ is continuous with respect
  to the weak* topologies on $\dual Y.$ and $\dual Z.$ if and only if
  $y_n^*\to y^*$ weak* in $\dual Y.$ implies $Ty_n^*\to Ty^*$ weak* in
  $\dual Z.$.
\end{prop}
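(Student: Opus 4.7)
The ``only if'' direction is automatic: any continuous map between topological spaces preserves convergence of sequences. The content is the converse. My plan is to reduce weak*-continuity of $T$ to weak*-continuity of each composition $\phi_z=\mathrm{ev}_z\circ T:\dual Y.\to\real$ as $z$ ranges over $Z$. Indeed, since the weak* topology on $\dual Z.$ is the initial topology for the evaluation maps $\{\mathrm{ev}_z\}_{z\in Z}$, $T$ is weak*--weak*-continuous iff every $\phi_z$ is weak*-continuous; and by the standard characterization, a linear functional on $\dual Y.$ is weak*-continuous iff it is represented by an element of $Y$ via the canonical embedding $Y\hookrightarrow Y^{\star\star}$. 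So the whole proposition reduces to the single claim: every weak*-sequentially continuous linear functional $\phi:\dual Y.\to\real$ lies in (the canonical image of) $Y$.

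The separability hypothesis enters through Banach--Alaoglu. Let $B=\clball 0,1.\subset\dual Y.$. Then $B$ is weak*-compact, and because $Y$ is separable the weak* topology on $B$ is metrizable; in particular $(B,w^*)$ is sequentially compact and on it sequential continuity is equivalent to continuity. Using these two properties of $(B,w^*)$ I would extract two facts about $\phi$. First, $\phi$ is norm-bounded: if it were not, a sequence $y_n^*\in B$ with $|\phi(y_n^*)|\to\infty$ would have a weak*-convergent subsequence $y_{n_k}^*\to y^*\in B$ by sequential compactness, and weak*-sequential continuity would force $\phi(y_{n_k}^*)\to\phi(y^*)\in\real$, a contradiction. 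Second, $\phi|_B$ is weak*-continuous, directly from metrizability plus the sequential hypothesis.

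The remaining and main step is to promote weak*-continuity of $\phi$ on $B$ to weak*-continuity on all of $\dual Y.$. Here I would invoke the Krein--Smulian (equivalently Banach--Dieudonn\'e) theorem, which says that a bounded linear functional on $\dual Y.$ is weak*-continuous iff its restriction to $B$ is weak*-continuous, equivalently iff its kernel intersected with each closed ball is weak*-closed. Combined with the boundedness and weak*-continuity on $B$ established above, this gives $\phi\in Y$, i.e.~there exists $y\in Y$ with $\phi(y^*)=y^*(y)$ for all $y^*\in\dual Y.$. Applying this to each $\phi_z$ produces $y_z\in Y$ with $(Ty^*)(z)=y^*(y_z)$, which is manifestly weak*-continuous in $y^*$; hence $T$ is weak*-continuous. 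The main obstacle is precisely this last step: metrizability of $(B,w^*)$ only gives continuity on bounded sets, and getting from there to continuity on the whole dual requires citing (or re-proving) Krein--Smulian, which is where the non-trivial topological content sits.
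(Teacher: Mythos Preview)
Your proposal is correct and follows essentially the same route as the paper: reduce to weak*-continuity of each scalar map $\phi_z=\mathrm{ev}_z\circ T$, use separability of $Y$ to get metrizability of closed balls in $\dual Y.$, and invoke Krein--\v{S}mulian to pass from continuity on bounded sets to weak*-continuity on all of $\dual Y.$. The only cosmetic differences are that the paper applies Krein--\v{S}mulian in the form ``a convex set is weak*-closed iff its trace on each ball is weak*-closed'' directly to $\ker\phi_z$ (so it does not need to establish norm-boundedness of $\phi_z$ separately), whereas you use the equivalent formulation for linear functionals and therefore insert the extra boundedness step; and the paper stops at ``$\phi_z$ is weak*-continuous'' without spelling out that this means $\phi_z\in Y$.
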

\begin{proof}
  It is enough to prove that the condition is sufficient. We have to
  show that for any $z\in Z$ the linear map
  \begin{align}
    \phi_z:\dual Y.&\to\real\\
    y^*&\mapsto (Ty^*)(z)
  \end{align}
  is continuous with respect to the weak* topology on $\dual Y.$. It
  suffices to show that $\ker\phi_z$ is weak* closed. Now,
  $\ker\phi_z$ is a linear subspace of $\dual Y.$, so it is convex. By
  the Krein-\v{S}mulian Theorem \cite[Chapter 3]{brezis_fun}, it
  suffices to show that for each $R>0$, $\clball 0, R.\cap\ker \phi_z$
  is closed in the relative weak* topology on the closed ball $\clball
  0, R.$. But as $Y$ is separable, the weak* topology on each $\clball
  0,R.\subset \dual Y.$ is metrizable \cite[Chapter
  3]{brezis_fun} and therefore it is enough to know that $\phi_z$ is
  sequentially continuous.
  \end{proof}
  We now present a proof of Proposition \ref{locality} following the
  argument of \cite[Lemma 13.4]{heinonen07}.
  \begin{proof}[Proof of Proposition \ref{locality}]
    By linearity, it suffices to show that $Df=0$ a.e.~on the set
    \begin{equation}
      B = \{x: f(x)=0\},
    \end{equation}
and if $B$ has null
measure the claim is trivial.
    Let us define
    \begin{equation}
      h_n=
      \begin{cases}
        \sgn f \cdot \sqrt{|f|}& \text{for $|f|\ge 1/n$} \\
        \sqrt{n} f & \text{for $|f|\le 1/n$} \\
        \end{cases};
    \end{equation}
    note that $\{h_n\}\subset\lipalg X.$ but it might not be a bounded
    sequence. Note also that $h_n$ vanishes on $B$. Now, the sequence
    $\{|h_n|\cdot h_n\}\subset\lipalg X.$ is uniformly bounded and
    converges pointwise to $f$. In fact,
    \begin{equation}
      |h_n|\cdot h_n=
      \begin{cases}
        f & \text{for $|f|\ge 1/n$} \\
        n f|f| & \text{for $|f|\le 1/n$} \\
        \end{cases}.
    \end{equation}
Let $C\subset B$ be a subset of finite positive measure. Then, as $|h_n|\cdot h_n\to f$ weak* in
$\lipalg X.$, then $D(|h_n|\cdot h_n)\to Df$ weak* in ${\rm
L}^1(X,\meas)$. In particular,
\begin{equation}
\begin{split}
  \int_X Df\cdot \chi_C\,d\mu&=\lim_{n\to\infty}\int_XD(|h_n|\cdot
  h_n)\cdot \chi_C\,d\mu\\
&=\lim_{n\to\infty}\int_X\left(D|h_n|\cdot h_n+Dh_n\cdot
  |h_n|\right)\cdot \chi_C\,d\mu\\
&=0.
\end{split}
\end{equation}
  \end{proof}
In the previous sections, we have referred to Stone-Weierstra\ss\
Theorem. The classical Stone-Weierstra\ss\ Theorem pertains to the
Banach Algebra $C(X)$ of continuous functions on a compact space $X$,
where the norm is the sup-norm. There is an analogue of this result in
the setting of Lipschitz algebras. From now to the end of this section
{\bf we will assume that} $X$ {\bf is bounded}. 
The condition of separating points
is replaced by an uniform condition:
\begin{defn}A subalgebra $\subalg\subset\lipalg X.$ is said to
  separate points uniformly if there is a constant $M>0$ such that for
  any pair of points $x_1,x_2\in X$ there is an $f\in\subalg$ such
  that 
  \begin{equation}
    |f(x_1)-f(x_2)|=\metric(x_1,x_2) \qquad \text{and} \qquad
    \lipnorm f!\le M.
  \end{equation}
\end{defn}
We state the result for $\lipalg
X.$, in \cite[Chapter~4]{weaver_book99} one can find the proof for
$\plipalg X.$.
\begin{thm}\label{stone-weierstrass}[Stone-Weierstra\ss\ for Lipschitz algebras]
  If $\subalg\subset\lipalg X.$ is a weak* closed subalgebra (with the
  same unity as $\lipalg X.$) that
  separates points uniformly, then $\subalg=\lipalg X.$.
\end{thm}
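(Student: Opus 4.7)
The plan is to carry out a chart-by-chart refinement: for each chart $(X_\alpha,\chartfuns)$ already provided by the measurable differentiable structure, I will construct a countable measurable partition of $X_\alpha$ (up to a null set) on whose pieces some $N_\alpha$-tuple drawn from $\genset$ serves as chart functions. Aggregating over $\alpha$ then yields the desired new system of charts.

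Fix such a chart. By the hypothesis that partial derivatives are derivations, Corollary \ref{freemodules2} tells me that $\{\chartder\}_{j=1}^{N_\alpha}$ is a basis of the free $\elleinfty X_\alpha.$-module $\dermod[X_\alpha]$; in particular it is linearly independent over $\elleinfty U.$ for every measurable $U\subset X_\alpha$ of positive measure. Given such a $U$, I apply the countable version of the orthogonality construction (Proposition \ref{ducentoventiduebis}, which is the extension of Corollary \ref{orthogonality} to the case $M=\infty$) to the independent set $\{\chartder\}_{j=1}^{N_\alpha}$ and the generating set $\genset$. This produces a measurable $V\subset U$ with $\meas(V)>0$, a finite subset $\{g'_1,\dots,g'_{N_\alpha}\}\subset\genset$, and new derivations $\{D'_1,\dots,D'_{N_\alpha}\}\subset\dermod[V]$, obtained by applying an invertible matrix $A$ over $\elleinfty V.$ to the partial derivatives, such that $D'_i g'_j(x)=\delta_{ij}$ for a.e.~$x\in V$.

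The central step is showing that $(V,\{g'_1,\dots,g'_{N_\alpha}\})$ is itself a differentiable chart. For this the crucial point is that the $g'_i$ are independent at a.e.~point of $V$ in the sense of Definition \ref{loc_ind_lip}. I apply Lemma \ref{continuity2} to the derivations $\{D'_1,\dots,D'_{N_\alpha}\}$ and the Lipschitz functions $\{g'_1,\dots,g'_{N_\alpha}\}$, which yields a full-measure subset $V'\subset V$ on which the localized derivation inequality together with the linearity relation \eqref{linearcontinuity} hold for arbitrary real coefficients. Combined with the duality $D'_i g'_j=\delta_{ij}$, this gives for every $z\in V'$ and every $(c_1,\dots,c_{N_\alpha})\in\real^{N_\alpha}$ the estimate
\begin{equation*}
\max_{i}|c_i|\;=\;\max_{j}\Bigl|\precise{D'_j}\Bigl(\sum_{i=1}^{N_\alpha}c_i g'_i\Bigr)(z)\Bigr|\;\le\;\Bigl(\max_j\vnorm D'_j!\Bigr)\,\biglip\Bigl(\sum_{i=1}^{N_\alpha}c_i g'_i\Bigr)(z),
\end{equation*}
so $\biglip(\sum c_i g'_i)(z)=0$ forces all $c_i=0$. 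Once independence is in hand, measurability of the would-be partial derivatives with respect to $(g'_i)$ is produced exactly as in Lemma \ref{finite-dimensionality} (via Lemma \ref{meas:dep}), giving the chart property on $V'$, while the bound $N_\alpha$ prevents the dimension from growing.

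To finish I run the exhaustion scheme from the proof of Theorem \ref{freemodules}: choose $V_1\subset X_\alpha$ of positive measure satisfying the above with mass at least two-thirds of the supremum, then repeat inside $X_\alpha\setminus V_1$, and so on. Since the extraction step applies to \emph{any} positive-measure subset of $X_\alpha$, the standard greedy argument forces the leftover set to have measure zero; this produces the required countable partition of $X_\alpha$ into charts whose chart functions lie in $\genset$. The main obstacle is the independence check of the $g'_i$ above, because it is the one place where the full strength of the assumption (partial derivatives are derivations, so that the localized derivation inequality can be used through Lemma \ref{continuity2}) enters; the other delicate point is remembering to use the countable version \ref{ducentoventiduebis} rather than Corollary \ref{orthogonality}, since $\genset$ is only assumed to be a generating set and may well be infinite.
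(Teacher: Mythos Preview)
Your proposal does not address the stated theorem at all. Theorem~\ref{stone-weierstrass} is the Stone--Weierstra{\ss} theorem for Lipschitz algebras: it asserts that a weak* closed unital subalgebra $\subalg\subset\lipalg X.$ that separates points uniformly must be all of $\lipalg X.$. Your argument, however, is a proof of Theorem~\ref{choice} --- the result that chart functions can be chosen from a generating set $\genset$ when the partial derivatives are derivations. The two statements live in different parts of the paper and have different hypotheses and conclusions; nothing in your proof mentions subalgebras, uniform separation of points, or weak* closure, and conversely the statement of Theorem~\ref{stone-weierstrass} involves no measurable differentiable structure, no charts, and no derivations.

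Note also that the paper does not actually prove Theorem~\ref{stone-weierstrass}: it only states the result and points to \cite[Chapter~4]{weaver_book99} for the proof in the pointed case, so there is no in-paper argument to compare against.

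As an aside, if your intended target was in fact Theorem~\ref{choice}, then your argument follows the paper's own proof essentially verbatim: Corollary~\ref{freemodules2} for the basis, Proposition~\ref{ducentoventiduebis} (the countable extension of Corollary~\ref{orthogonality}) to extract $V$ and the dual derivations $D'_i$, Lemma~\ref{continuity2} for the independence check, and the exhaustion scheme from Theorem~\ref{freemodules} to cover $X_\alpha$.
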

\bibliographystyle{alpha}
\bibliography{lip_alg_biblio,analysis_metric,graduate_books}
				% note that in
                                % concatenating files for the
                                % bibliography, white spaces are AVOIDED
\end{document}